\newtheorem{theorem}{Theorem}[section]
\newtheorem{proposition}[theorem]{Proposition}
\newtheorem{lemma}[theorem]{Lemma}
\newtheorem{corollary}[theorem]{Corollary}
\theoremstyle{definition}
\newtheorem{definition}[theorem]{Definition}
\theoremstyle{remark}
\newtheorem{remark}[theorem]{Remark}
\newtheorem{ex}[theorem]{Example}
\numberwithin{equation}{section}
\newcommand{\red}{{\rm red}}
\newcommand{\codim}{{\rm codim}}
\newcommand{\Spec}{{\rm Spec\,}}
\newcommand{\sing}{{\rm sing}}
\newcommand{\Char}{{\rm char}}
\newcommand{\Tr}{{\text{Tr}}}
\newcommand{\supp}{{\rm supp}\,}
\newcommand{\0}{\emptyset}
\newcommand{\sC}{{\mathcal C}}
\newcommand{\sD}{{\mathcal D}}
\newcommand{\sJ}{{\mathcal J}}
\newcommand{\sL}{{\mathcal L}}
\newcommand{\sM}{{\mathcal M}}
\newcommand{\sN}{{\mathcal N}}
\newcommand{\sO}{{\mathcal O}}
\newcommand{\sP}{{\mathcal P}}
\newcommand{\sS}{{\mathcal S}}
\newcommand{\sU}{{\mathcal U}}
\newcommand{\A}{{\mathbb A}}
\newcommand{\C}{{\mathbb C}}
\renewcommand{\P}{{\mathbb P}}
\newcommand{\R}{{\mathbb R}}
\renewcommand{\det}{\operatorname{det}}
\newcommand{\Nm}{{\operatorname{Nm}}}
\newcommand{\id}{{\operatorname{\rm Id}}}
\newcommand{\<}{\langle}
\renewcommand{\>}{\rangle}
\renewcommand{\dim}{{\operatorname{\rm dim}}} 
\newcommand{\xr}[1]{\xrightarrow{#1}}
\newcommand{\del}{\partial}
\newcommand{\Sm}{{\mathbf{Sm}}}
\newcommand{\Sym}{{\operatorname{Sym}}}
\newcommand{\Sing}{{\operatorname{Sing}}}
\newcommand{\GW}{{\operatorname{GW}}} 
\newcommand{\sGW}{{\mathcal{GW}}}
\newcommand{\GL}{\operatorname{GL}}
\newcommand{\PGL}{\operatorname{PGL}}
\newcommand{\Wel}{{\operatorname{Wel}}}
\newcommand{\Res}{{\operatorname{Res}}}
\newcommand{\ev}{\text{\it ev}}
\newcommand{\ind}[1]{}
\newcommand{\inp}[1]{}
\newcommand{\res}{\text{res}}
\renewcommand{\setminus}{-}
\begin{document}
\setcounter{tocdepth}{1}

\title{Toward an algebraic theory of Welschinger invariants}

\author{Marc Levine}
\address{Fakult\"at Mathematik\\
Universit\"at Duisburg-Essen\\
Thea-Leymann-Str. 9\\
45127 Essen\\
Germany}
\email{marc.levine@uni-due.de}

\keywords{Grothendieck-Witt ring, enumerative geometry, Gromov-Witten theory}

\subjclass[2010]{14F42, 55N20, 55N35}

\thanks{The author would like to thank the DFG for its support through the SFB Transregio 45 and the SPP 1786 ``Homotopy theory and algebraic geometry''}

\begin{abstract} Let $S$ be a smooth del Pezzo surface over a field $k$ of characteristic $\neq 2, 3$. We define an invariant in the Grothendieck-Witt ring $\GW(k)$ for ``counting'' rational curves  in a curve class $D$ of fixed positive degree  (with respect to the anti-canonical bundle $-K_S$)  and containing a collection of distinct closed points $\mathfrak{p}=\sum_ip_i$ of total degree $r:=-D\cdot K_S-1$ on $S$. This recovers Welschinger's invariant in case $k=\R$ by applying the signature map. The main result is that this quadratic invariant depends only on the $\A^1$-connected component containing $\mathfrak{p}$ in $\Sym^r(S)^0(k)$, where  $\Sym^r(S)^0$ is the open subscheme of $\Sym^r(S)$ parametrizing geometrically reduced 0-cycles. \end{abstract}
\maketitle

\tableofcontents

\section*{Introduction} 
One fascinating aspect of real Gromov-Witten invariants is the theory  of Welsch\-inger invariants in the setting of symplectic/almost complex geometry. This was  developed by Welschinger in \cite{Wel2, Wel1}, where he considers the real aspects of the algebraic enumerative problem of counting the number of maps $f:\C\P^1\to X$, $X$ a smooth projective rational surface over $\C$, with $f_*([\C\P^1])$ in a fixed homology class, and passing through a fixed configuration of $n$ points in $X$, with $n$ chosen so one expects a finite number of such maps. Welschinger transforms this into the setting of almost complex structures on a symplectic four manifold $X$ which has a real structure given by a involution mimicking complex conjugation on the $\C$-points of a real algebraic surface. This allows one to pose the problem of counting the real maps $f:S^2\to X$, with $S^2$ and $X$ given compatible almost complex structures, such that $f$ sends a fixed set of points $s_\bullet$ of $S^2$ to a fixed set $x_\bullet$ on $X$. Phrased in this way, the varying configuration of points on the algebraic surface $X$ and the projective line gets replaced by a fixed set of points $s_\bullet$, $x_\bullet$,  with the variation taking place in the almost complex structures. 

In any case, the number of real solutions to this problem turns out in general to vary with the choice of almost complex structures. However, Welschinger assigns a ``mass'' to a general map $f:S^2\to X$ as follows: A general $f$ will be unramified and the image $f(S^2)$ with have finitely many double points. Using the real structure, one can divide the double points into three classes: those occurring in complex conjugate pairs, the real points with two real branches and the real points with complex conjugate branches. Welschinger discards the complex conjugate pairs of double points, gives the real points with real branches the sign +1 and those with complex conjugate branches the sign -1. The product of these signs is the {\em mass} of the map $f$, and Welschinger's main result is that, if one fixed the real type of the configuration of points, that is, if one fixes the number of points in each real connected component of $X$, then the mass-weighted sum of maps $f$  is independent of the (suitably general) choice of almost complex structure. 

Itenberg, Kharlamov and Shustin have reformulated Welschinger's work in the setting of real algebraic geometry in their paper \cite{IKS}. In a somewhat different direction, Kass and Wickelgren \cite{KW1, KW2} have proven a series of results on quadratic refinements of invariants from enumerative geometry, which has strongly influenced this paper.  They have recently announced a quadratic refinement of the count of bi-nodal curves in a dimension two linear system of curves on $\P^2$ via a quadratic refinement of the iterative method of Kleiman and Piene \cite{KPi}.   In this paper we give an different approach, generalizing the Welschinger invariant for curves on del Pezzo surfaces from the case of real algebraic geometry to algebraic geometry over an arbitrary perfect field (of characteristic not equal to 2 or 3). For simplicity, we describe our approach in the case of $\P^2$.

We consider the moduli stack of stable maps of genus 0 curves $f:C\to \P^2$ with a reduced degree $n$ divisor $\mathfrak{d}$ on $C$ with $f_*([C])$ of degree $d$,  $\bar{\sM}_{0,n}^\Sigma(\P^2,d)$. Sending $\mathfrak{d}$ to the image 0-cycle $f_*(\mathfrak{d})$ defines a morphism of stacks
\[
\Pi^\Sigma:\bar{\sM}_{0,n}^\Sigma(\P^2,d)\to [\mathfrak{S}_n\backslash (\P^2)^n]
\]
We consider the open substack (actually subscheme) $(\Sym^n\P^2)^0$ of reduced degree $n$ 0-cycles on $\P^2$, and the restriction of $\Pi^\Sigma$ to the inverse image substack
\[
\mathfrak{z}:\bar{\sM}_{0,3d-1}^\Sigma(\P^2,d)^0\to (\Sym^{3d-1}\P^2)^0
\]

We construct for each unramified birational map $f:C\to \P^2_K$, with $C$ a smooth geometrically irreducible genus zero curve over a field $K$,  and $f(C)$ having only ordinary double points, an invariant $\Wel_K(f(C))$ in the Grothendieck-Witt ring $\GW(K)$. For $K=\R$, the signature of
$\Wel_K(f(C))$  is (up to a sign depending only on $d$) Welschinger's mass. For $\eta\in (\Sym^{3d-1}\P^2)^0$ such that $\Wel_{k(x)}(f_x(C_x))$ is defined  for all $x=(f_x:C_x\to \P^2_{k(x)})$ lying over $\eta$,   taking a sum of traces down to $k(\eta)$ over the $\Wel_{k(x)}(f_x(C_x))$ gives us an element $\Wel_d(\eta)\in \GW(k(\eta))$. One shows rather easily that this gives a section of the sheaf $\sGW$ of Grothendieck-Witt rings over some open subscheme of $(\Sym^{3d-1}\P^2)^0$. Our main result, completely analogous to Welschinger's invariance result, is that this construction actually defines a global section $\Wel_d\in H^0((\Sym^{3d-1}\P^2)^0, \sGW)$. In consequence, for $K$ a field, we have  
\[
\Wel_d(\mathfrak{d}_1)=\Wel_d(\mathfrak{d}_2) \in \GW(K)
\]
if we have  0-cycles $\mathfrak{d}_1, \mathfrak{d}_2$  in the same  $\A^1$-connected component of $(\Sym^{3d-1}\P^2)^0(K)$.

For $K=\R$, one can show that the $\A^1$-connected components of $(\Sym^{3d-1}\P^2)^0(\R)$ just depend on the number of real points in the given 0-cycle. The rank of $\Wel_d(\mathfrak{d})$ gives the count of number of curves over $\C$ and the signature of  of $\Wel_d(\mathfrak{d})$ gives Welschinger's invariant (up to the sign mentioned above), so we recover Welschinger's invariance result for curves on del Pezzo surfaces, when we restrict to actual complex structures, as opposed to almost complex structures. In particular, this recovers the results of Itenberg-Kharlamov-Shustin \cite{IKS1}. 

As pointed out to me by Jesse Kass, in the special case of  cubic curves in $\P^2$ passing through a degree eight 0-cycle, the Welschinger invariant   is just the sum of traces of the local $\A^1$-invariants at the singular points of the singular fibers of the resulting pencil. In this case, the Welschinger invariant is related to the Euler characteristic of $\P^2$ blown up along the base-locus of the pencil by the Riemann-Hurwitz formula proven in \cite[Theorem 12.2]{LevEnum}. As the base-locus is the given degree eight 0-cycle plus a single rational point, the resulting Euler characteristic is completely determined by the original degree eight 0-cycle; in the case of the ground field $k=\R$, the analogous fact was already noted in Welschinger's paper \cite{Wel2}. 

I would like to thank Kirsten Wickelgren and Jesse Kass for their interesting and helpful comments.  In particular, Jesse Kass pointed out that the local invariant defined here does {\em not} in general agree with the one defined by Shende and G\"ottsche in \cite{ShenGott} (or rather with the image of the Shende-G\"ottsche local invariant in $\GW(k)$ with respect to the motivic measure given by an appropriate version of the quadratic Euler characteristic). This is not to say that either of the two local invariants is incorrect but rather that there should be a relation that says the two global invariants do agree. 

The paper is organized as follows. We begin with recalling the moduli stack of $n$-pointed stable maps of a genus 0-curve to $\P^2$, and its quotient stack by the symmetric group permuting the points. This is the basic object of our study. In \S2 we define and study the Welschinger invariant for a curve in the plane, refining Welschinger's ``mass''. In \S 3 we set up our main result and give the proof, relying on results obtained in the following four sections. These are the technical heart of the paper, giving an analysis of the behavior of our quadratic Welschinger invariant at four different degenerate types of stable maps: curves whose image has a cusp, a tacnode or a triple point, or curves consisting of two irreducible components. In essence, we follow the arguments of Welschinger and Itenberg-Kharlamov-Shustin, we need only be a bit more careful. As in these earlier papers, the most subtle case is that of a cusp.

To fix ideas and keep the argument as simple as possible, we consider in the first seven sections the case of curves of degree $d$ in $\P^2$. In the final section, we explain how to generalize these results to arbitrary del Pezzo surfaces. 

We work throughout this paper over a base-field $k$ of characteristic $\neq 2, 3$.

\section{Stable maps of genus 0 curves}  Fix positive integers $d, n$. We have the moduli stack $\bar{\sM}_{0,n}(\P^2, d)$ of stable maps $f:C\to \P^2$ of $n$-pointed geometrically connected quasi-stable genus zero curves $C$ with $f^*(\sO_{\P^2}(1))$ an invertible sheaf on $C$ of degree $d$, with open substack $\sM_{0,n}(\P^2, d)$ of those $f:C\to \P^2$ with $C$ smooth. Letting $[C]$ denote the fundamental class of $C$ as a dimension 1 cycle on $C$, the degree condition on $f$ is the same as requiring that $f_*([C])$ is a degree $d$, dimension one cycle on $\P^2$.

$\sM_{0,n}(\P^2, d)$  is a proper smooth Artin stack over $k$ (see e.g. \cite[\S 1.3]{FP} for a treatment over $\C$ and \cite{AbramOort} for an arbitrary base scheme) and is irreducible by \cite{J, KP}.

The symmetric group $\mathfrak{S}_n$ acts on $\bar{\sM}_{0,n}(\P^2, d)$ by permuting the pointing, giving us the quotient stack $\bar{\sM}_{0,n}^\Sigma(\P^2, d):=[\mathfrak{S}_n\backslash \bar{\sM}_{0,n}(\P^2, d)]$ and the open substack
$\sM_{0,n}^\Sigma(\P^2, d):=[\mathfrak{S}_n\backslash \sM_{0,n}(\P^2, d)]$

 For a triple $(f, C, (p_1,\ldots, p_n))$ representing an $n$-pointed genus zero curve  over some base-scheme $S$ and map $f:C\to \P^2_S$, we have the $S$-point $(f(p_1),\ldots, f(p_n)):S\to (\P^2)^n$; sending $(f, C, (p_1,\ldots, p_n))$ to 
$(f(p_1),\ldots, f(p_n))$ defines a morphism 
\[
\Pi:\bar{\sM}_{0,n}(\P^2, d)\to (\P^2)^n, 
\]
equivariant with respect to the action of $\mathfrak{S}_n$ on $\bar{\sM}_{0,n}(\P^2, d)$ and the permutation action on $(\P^2)^n$. Thus $\Pi$ induces a proper map on the quotient stacks
\[
\Pi^\Sigma:\bar{\sM}_{0,n}^\Sigma(\P^2, d)\to [\mathfrak{S}_n\backslash (\P^2)^n]
\]

A degree $n$ 0-cycle $\mathfrak{d}$ on a   $K$-scheme $Y$ is {\em reduced} if $\mathfrak{d}_{\bar{K}}$ is  a sum of $n$ distinct $\bar{K}$-points. Equivalently, $\mathfrak{d}=\sum_{i=1}^rp_i$ with the $p_i$ distinct closed points of $Y$ such that $k(p_i)$ is a separable extension of $K$ for each $i$. We may consider $\bar{\sM}_{0,n}^\Sigma(\P^2, d)$ as the stack parametrizing pairs $(f:C\to \P^2, \mathfrak{d})$ where $f$ is a stable map of a genus zero semi-stable curve $C$ with $f_*([C])$ of degree $d$,  and $\mathfrak{d}$ is a reduced divisor on $C$ of degree $n$.

Let $(\P^2)^n_0\subset (\P^2)^n$ be the open complement of all diagonals in $(\P^2)^n$. As $\mathfrak{S}_n$ acts freely on the quasi-projective scheme $(\P^2)^n_0$, the quotient stack $[\mathfrak{S}_n\backslash (\P^2)^n_0]$ is the same as the quotient scheme $\mathfrak{S}_n\backslash (\P^2)^n_0$, which in turn is the open subscheme $\Sym^n(\P^2)^0$ of the symmetric power $\Sym^n \P^2$ parametrizing reduced degree $n$ 0-cycles on $\P^2$. We let  $\bar{\sM}_{0,n}^\Sigma(\P^2, d)^0\subset \bar{\sM}_{0,n}^\Sigma(\P^2, d)$ be the open substack $(\Pi^\Sigma)^{-1}(\Sym^n(\P^2)^0)$ and let 
\[
\mathfrak{z}:\bar{\sM}_{0,n}^\Sigma(\P^2, d)^0\to \Sym^n(\P^2)^0
\]
denote the restriction of $\Pi^\Sigma$. We may thus consider $\sM_{0,n}^\Sigma(\P^2, d)^0$ as the open  substack of  $\bar{\sM}_{0,n}^\Sigma(\P^2, d)$ parametrizing pairs $(f:C\to \P^2, \mathfrak{d})$ such that $f_*(\mathfrak{d})$ is a reduced 0-cycle of degree $n$.  

 At a geometric point $x:=(f:C\to \P^2, \mathfrak{d})$ such that $f:C\to f(C)$ is birational, there are no automorphisms of $x$, so $\bar{\sM}_{0,n}^\Sigma(\P^2, d)$ is a scheme in a neighborhood of $x$. Similarly, $\bar{\sM}_{0,n}^\Sigma(\P^2, d)^0$ is a scheme in a neighborhood of each geometric point $x=(f:C\to \P^2, \mathfrak{d})$ such that $f$ is unramified along the support of $\mathfrak{d}$.

We consider additional open substacks of $\bar{\sM}_{0,n}^\Sigma(\P^2, d)^0$. Let 
$\sM_{0,n}^\Sigma(\P^2, d)^0=\sM_{0,n}^\Sigma(\P^2, d)\cap \bar{\sM}_{0,n}^\Sigma(\P^2, d)^0$. Let  $\bar{\sM}_{0,n}^\Sigma(\P^2, d)_\delta\subset \bar{\sM}_{0,n}^\Sigma(\P^2, d)$ be the open substack of   $(f, C,\mathfrak{d})$ such that\\[5pt]
i. the restriction of $f$ to each irreducible component of $C$ is unramified, \\[3pt]
ii.  $f$ is an embedding in a neighborhood of each singular point of $C$\\[3pt]
iii. the geometric image $f(C)$ has only ordinary double points as singularities.\\[5pt]
We set 
\begin{align*}
\bar{\sM}_{0,n}^\Sigma(\P^2, d)_\delta^0&:=\bar{\sM}_{0,n}^\Sigma(\P^2, d)_\delta\cap \bar{\sM}_{0,n}^\Sigma(\P^2, d)^0\\
\sM_{0,n}^\Sigma(\P^2, d)_\delta&:=\bar{\sM}_{0,n}^\Sigma(\P^2, d)_\delta\cap \sM_{0,n}^\Sigma(\P^2, d)\\
\sM_{0,n}^\Sigma(\P^2, d)^0_\delta&:=
\bar{\sM}_{0,n}^\Sigma(\P^2, d)^0_\delta\cap \sM_{0,n}^\Sigma(\P^2, d).
\end{align*}

For $i\ge 2$, we have the closed substacks $D^{(i)}\subset \bar{\sM}_{0,n}^\Sigma(\P^2, d)$ with geometric points consisting of those curves with $\ge i$ irreducible components, and the open substack $D^{(i)}_\delta:=D^{(i)}\cap \bar{\sM}_{0,n}^\Sigma(\P^2, d)_\delta$ of $D^{(i)}$.

We let $\bar{\sM}_{0,n}^\Sigma(\P^2, d)_{cusp}$, $\bar{\sM}_{0,n}^\Sigma(\P^2, d)_{tac}$, $\bar{\sM}_{0,n}^\Sigma(\P^2, d)_{tri}$ be the respective closures in $\bar{\sM}_{0,n}^\Sigma(\P^2, d)$ of the locally closed substacks of $\sM_{0,n}^\Sigma(\P^2, d)$ with geometric points those $f:C\to \P^2$ such that $f(C)$ has degree $d$ and  $f(C)$ has a single ordinary cusp, resp. a single ordinary tacnode, resp. a single ordinary triple point, and all other singular points ordinary double points. Here , a single ordinary cusp, resp., ordinary tacnode, resp. ordinary triple point are singularities of the respective forms
\[
y^2-x^3,\ y^2-x^4, \ xy(x-y)
\]
in $\Spec K[[x,y]]$, with $C$ defined over the algebraically closed field $K$.

For $?\in \{cusp, tac, tri\}$, we set 
\begin{align*}
\bar{\sM}_{0,n}^\Sigma(\P^2, d)^0_{?}&:=\bar{\sM}_{0,n}^\Sigma(\P^2, d)_{?}\cap \bar{\sM}_{0,n}^\Sigma(\P^2, d)^0,\\
\sM_{0,n}^\Sigma(\P^2, d)^0_{?}&:=\bar{\sM}_{0,n}^\Sigma(\P^2, d)^0_{?}\cap 
\sM_{0,n}^\Sigma(\P^2, d)
\end{align*}
and for $?\in \{tac, tri\}$, we set 
\begin{align*}
\bar{\sM}_{0,n}^\Sigma(\P^2, d)^0_{?,\delta}&:=\bar{\sM}_{0,n}^\Sigma(\P^2, d)_{?}\cap \bar{\sM}_{0,n}^\Sigma(\P^2, d)^0_\delta\\
\sM_{0,n}^\Sigma(\P^2, d)^0_{?,\delta}&:=\bar{\sM}_{0,n}^\Sigma(\P^2, d)^0_{?,\delta}\cap \sM_{0,n}^\Sigma(\P^2, d).
\end{align*}
We let $\bar{\sM}_{0,n}^\Sigma(\P^2, d)_{ord}$ be the union
\begin{multline*}
\bar{\sM}_{0,n}^\Sigma(\P^2, d)_{ord}\\
=
\sM_{0,n}^\Sigma(\P^2, d)^0_\delta\cup \sM_{0,n}^\Sigma(\P^2, d)^0_{cusp}\cup \sM_{0,n}^\Sigma(\P^2, d)^0_{tac,\delta}\\\cup \sM_{0,n}^\Sigma(\P^2, d)^0_{tri,\delta}\cup (D^{(2)}_\delta\setminus D^{(3)})
\end{multline*}

\begin{lemma}\label{lem:smoothness} Let $x=(f:C\to \P^1, \mathfrak{d})$ be a point of  $\bar{\sM}_{0,n}^\Sigma(\P^2, d)^0_\delta$.   Suppose in addition that $n\le 3d-1$ and either\\[5pt]
i. $C$ is irreducible \\[3pt]
or\\[3pt]
ii. $C$ has two irreducible components, $C=C_1\cup C_2$, $\mathfrak{d}=\mathfrak{d}_1+\mathfrak{d}_2$ with $\mathfrak{d}_i$ of degree $n_i$,  supported on $C_i$, and $n_i\le 3d_i$, $i=1, 2$.
\\[5pt]
 Then the map $\mathfrak{z}:\bar{\sM}_{0,n}^\Sigma(\P^2, d)^0\to (\Sym^n\P^2)^0$ is smooth at $x$.
\end{lemma}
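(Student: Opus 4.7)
My plan is to verify that the differential $d\mathfrak{z}_x$ is surjective. Since $\P^2$ is convex, the stack $\bar{\sM}_{0,n}^\Sigma(\P^2, d)$ is smooth at $x$ of dimension $3d + n - 1$, and $\Sym^n(\P^2)^0$ is smooth at the reduced $0$-cycle $f_*\mathfrak{d}$ of dimension $2n$; hence surjectivity of $d\mathfrak{z}_x$ will imply smoothness of $\mathfrak{z}$ at $x$.

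In case i, $C \cong \P^1$, and the tangent space at $x$ is built from deformations of $f$ via sections $s \in H^0(f^*T_{\P^2})$ and deformations of the markings $v_i \in T_{p_i}C$, with $d\mathfrak{z}_x$ sending $(s, v_\bullet) \mapsto (s(p_i) + df_{p_i}(v_i))_i \in \bigoplus_i T_{f(p_i)}\P^2$. Using the normal-bundle sequence $0 \to T_C \to f^*T_{\P^2} \to N_f \to 0$ with $N_f \cong \sO(3d-2)$, surjectivity reduces (after absorbing tangent directions from marking deformations) to surjectivity of $H^0(N_f) \to H^0(N_f|_\mathfrak{d})$, which follows from $H^1(N_f(-\mathfrak{d})) = H^1(\P^1, \sO(3d-2-n)) = 0$ for $n \le 3d-1$.

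Case ii ($C = C_1 \cup_q C_2$) I would analyze componentwise. Map deformations are pairs $(s_1, s_2) \in H^0(f_1^*T_{\P^2}) \times H^0(f_2^*T_{\P^2})$ with gluing $s_1(q) = s_2(q) \in T_{f(q)}\P^2$; since evaluation at $q$ is surjective on each $H^0(f_j^*T_{\P^2})$, the $s_j$ may be varied independently when computing the image in $\bigoplus_{p_i \in C_j} T_{f(p_i)}\P^2$. The case-i analysis applied to component $C_j$ then gives surjectivity of the corresponding factor provided $n_j \le 3d_j - 1$, and the hypothesis $n_1 + n_2 \le 3d - 1 < 3d_1 + 3d_2$ guarantees this for at least one index $j$. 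If it holds for both, we are done; otherwise precisely one component, say $C_1$, has $n_1 = 3d_1$, leaving a one-dimensional cokernel isomorphic to $H^1(N_{f_1}(-\mathfrak{d}_1)) \cong H^1(\P^1, \sO(-2))$.

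The missing direction will be supplied by the tangent vector of smoothing the node $q$. Taking local coordinates $xy = 0$ on $C$ at $q$ and a local embedding of $f$ sending the two branches to the coordinate axes of $\P^2$ near $f(q)$, the smoothing $\{xy = \epsilon\}$ forces any lift of $p_i \in C_1$ with local coordinate $a_i$ to move to $(a_i, \epsilon/a_i)$, so that $f(p_i)$ acquires a first-order deformation in the direction of the tangent to the second branch at $f(q)$, scaled by $1/a_i$. The resulting vector $(v_i)_{p_i \in C_1}$ is the restriction to $\mathfrak{d}_1$ of a meromorphic section of $N_{f_1}$ with simple pole and nonzero residue at $q$. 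Using $H^1(N_{f_1}(q-\mathfrak{d}_1)) = H^1(\P^1, \sO(-1)) = 0$, every vector in $\bigoplus_{p_i \in C_1} N_{f_1, p_i}$ lifts to a section of $N_{f_1}(q)$, and the cokernel of evaluation from $H^0(N_{f_1})$ is detected by the residue at $q$; the smoothing vector has nonzero residue and so generates this cokernel. The main obstacle will be making the residue identification precise and confirming that the explicit smoothing contribution indeed hits a generator of $H^1(N_{f_1}(-\mathfrak{d}_1))$.
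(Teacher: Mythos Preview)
Your treatment of case (i) matches the paper's. For case (ii), however, the paper takes a shorter and more uniform route that avoids the explicit node--smoothing analysis entirely. Rather than decomposing $C$ into its components and tracking the smoothing direction by hand, the paper works on the nodal curve $C$ as a whole: since $f$ is an immersion (we are in the $\delta$-locus), the first-order deformations of the stable map are $H^0(C,\omega_f)$ with $\omega_f:=f^*\omega_{\P^2/k}^{-1}\otimes\omega_{C/k}$ an \emph{invertible} sheaf on $C$. Surjectivity of the restriction map to $\bigoplus_j f^*T_{\P^2,f(p_j)}/T_{C,p_j}$ then reduces to $H^1(C,\omega_f(-\mathfrak d))=0$, which by Serre duality on the nodal curve is $H^0(C,f^*\sO_{\P^2}(-3)(\mathfrak d))=0$. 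That vanishing is a two-line degree argument: order the components so that $n_1\le 3d_1-1$; any global section restricts to zero on $C_1$ (degree $n_1-3d_1<0$), hence vanishes at the node, so its restriction to $C_2$ lies in a line bundle of degree $n_2-3d_2-1<0$ and is again zero.

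The conceptual point is that the twist by the dualizing sheaf $\omega_C$ already packages the node--smoothing direction: one has $\omega_f|_{C_j}\cong N_{f_j}(q)$, so your ``meromorphic section of $N_{f_1}$ with a simple pole at $q$'' is exactly a section of $\omega_f|_{C_1}$, and the residue matching is the gluing condition for a section of $\omega_f$ on $C$. Your approach is correct in outline and would recover the same surjectivity, but it obliges you to justify the claim that the glued $(s_1,s_2)$ may be ``varied independently'' at the marked points (this is not literally obvious, since the gluing is a codimension-two constraint in $H^0(f_1^*T_{\P^2})\times H^0(f_2^*T_{\P^2})$), and then to carry out the residue identification you flag as the main obstacle. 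Both steps become automatic once one works with $H^0(C,\omega_f)$ and Serre duality on $C$ from the start.
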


\begin{proof}  To check that $\mathfrak{z}$ is smooth at $x$, we may pullback by the \'etale cover $(\P^2)^n_0\to (\Sym^n\P^2)^0$. This replaces $\bar{\sM}_{0,n}^\Sigma(\P^2, d)$ with $\bar{\sM}_{0,n}(\P^2, d)$, the  moduli stack of $n$-pointed genus 0 curves, and replaces $\mathfrak{d}$ with a tuple of points $(p_1,\ldots, p_n)$ on $C$.

The assumption that $x$ is in  $\bar{\sM}_{0,n}^\Sigma(\P^2, d)^0_\delta$ implies that the space of first order deformations of $f$ are given by $H^0(C, \omega_f)$, where $\omega_f$ is the invertible sheaf  $f^*\omega_{\P^2/k}^{-1}\otimes \omega_{C/k}$. We claim that the sum of restriction maps $i_{p_j}$ defines a surjection
\[
i_{p_*}^*:H^0(C, \omega_f)\to \oplus_{j}f^*T_{\P^2, f(p_j)}/T_{C, p_j}
\]
Indeed, we have the exact sheaf sequence
\[
0\to \omega_f(-\sum_{i,j}p_j^i)\to \omega_f\to  \oplus_{j}f^*T_{\P^2, f(p_j)}/T_{C_i, p_j}\to 0
\]
so the surjectivity we are seeking will follow from the vanishing of $H^1(C, \omega_f(-\sum_{j}p_j))$.  By Riemann-Roch, we have
\[
H^1(C, \omega_f(-\sum_{j}p_j))\cong H^0(C, f^*\sO_{\P^2}(-3)(\sum_{j} p_j))
\]

Suppose that $C$ is irreducible. Then $f^*\sO_{\P^2}(-3)(\sum_{j} p_j)$ has degree $n-3d<0$, hence has no global sections, $H^1(C, \omega_f(-\sum_{j}p_j))=0$ and the restriction map is surjective.

In case (ii),  $C=C_1\cup C_2$,  then we can order the $C_i$ so that $n_1\le 3d_1-1$, $n_2\le 3d_2$. Let $t$ be a global section of $f^*\sO_{\P^2}(-3)(\sum_{j} p_j)$. Since this invertible sheaf has negative degree $n_1-3d_1$ on $C_1$, $t\equiv0$ on $C_1$, and in particular, $t$ vanishes at the intersection point $p_0$ of $C_1$ and $C_2$. Thus the restriction of $t$ to $C_2$ gives a section of  $f^*\sO_{\P^2}(-3)(-p_0+\sum_{j} p_j)\otimes\sO_{C_2}$, which is an invertible sheaf of degree $n_2-3d_2-1<0$ on $C_2$ and thus $t$ is identically zero.

The deformation space of $f$, plus the direct sum of tangent spaces $\oplus_jT_{C,p_j}$ is equal to the tangent space to $\bar{\sM}_{0,n}^\Sigma(\P^2, d)$ at $x$. Since $f$ is unramified at each point $p_j$, 
\[
f^*T_{\P^2, f(p_j)}=T_{C,p_j}\oplus   f^*T_{\P^2, f(p_j)}/T_{C, p_j}, 
\]
and thus the surjectivity proven above yields the surjectivity of
\[
d\mathfrak{z}:T_{\bar{\sM}_{0,n}^\Sigma(\P^2, d),x}\to \mathfrak{z}^*T_{\Sym^n\P^2, \mathfrak{z}(x)},
\]
that is, $\mathfrak{z}$ is smooth at $x$.
\end{proof}

\begin{lemma}\label{lem:DimOrd} 1. $\bar{\sM}_{0,n}^\Sigma(\P^2, d)$ is irreducible of dimension $3d-1+n$.\\[5pt]
2. $\bar{\sM}_{0,n}^\Sigma(\P^2, d)_{ord}\subset
\bar{\sM}_{0,n}^\Sigma(\P^2, d)$ is an open substack of $\bar{\sM}_{0,n}^\Sigma(\P^2, d)$\\[5pt]
3. Each irreducible component  $F$ of $\bar{\sM}_{0,n}^\Sigma(\P^2, d)\setminus \bar{\sM}_{0,n}^\Sigma(\P^2, d)_{ord}$ has codimension $\ge 2$ in $\bar{\sM}_{0,n}^\Sigma(\P^2, d)$.\\[5pt]
4. For $n\le 3d-1$, the map $\mathfrak{z}:\bar{\sM}_{0,n}^\Sigma(\P^2, d)\to \Sym^n(\P^2)$ is proper, surjective and generically smooth, with generic fiber of dimension $3d-1-n$. In particular, for $n=3d-1$, and for each irreducible component $F$ as in (3), we have 
\[
\codim_{\Sym^{3d-1}(\P^2)}\mathfrak{z}(F)\ge 2
\]
\end{lemma}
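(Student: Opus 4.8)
The plan is to obtain (1) from standard properties of $\bar{\sM}_{0,n}(\P^2,d)$, to prove (2) and (3) together as two facets of one dimension count, and to deduce (4) essentially formally from Lemma~\ref{lem:smoothness} and (1)--(3). \emph{For (1):} since $T_{\P^2}$ is globally generated, $H^1(C,f^*T_{\P^2})=0$ for every genus $0$ stable map $f\colon C\to\P^2$, so $\bar{\sM}_{0,n}(\P^2,d)$ is a smooth proper Deligne--Mumford stack; since $\sM_{0,n}(\P^2,d)$ is irreducible by \cite{J, KP} and dense in it, so is $\bar{\sM}_{0,n}(\P^2,d)$. Its dimension is computed on the open $\sM_{0,0}(\P^2,d)=[\PGL_2\backslash\mathrm{Mor}_d(\P^1,\P^2)]$, of dimension $\bigl(3(d+1)-1\bigr)-3=3d-1$, adding $n$ for the marked points; the quotient by the finite group $\mathfrak{S}_n$ preserves irreducibility and dimension, so $\bar{\sM}_{0,n}^\Sigma(\P^2,d)$ is irreducible of dimension $3d-1+n$.

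\emph{For (2) and (3):} a general point of $\bar{\sM}_{0,n}^\Sigma(\P^2,d)$ lies in $\sM_{0,n}^\Sigma(\P^2,d)^0_\delta$ — a general degree $d$ map $\P^1\to\P^2$ is an immersion with nodal image, and general marked points have distinct images avoiding the nodes — so $\bar{\sM}_{0,n}^\Sigma(\P^2,d)_{ord}$ is dense, and it remains to show its complement $W$ is closed of codimension $\ge2$. I would stratify the stack by the local structure of a point $(f\colon C\to\P^2,\mathfrak{d})$ and estimate the dimension of each stratum lying in $W$: (i) $D^{(3)}$ (at least three components of $C$), which is closed and of codimension $\ge2$, since in the smooth stack $\bar{\sM}_{0,n}(\P^2,d)$ each node of $C$ imposes one condition and three components force two nodes; (ii) two-component curves in $D^{(2)}\setminus D^{(3)}$ violating one of the conditions (i)--(iii) defining $\bar{\sM}_{0,n}^\Sigma(\P^2,d)_\delta$, codimension one for reducibility plus at least one more; (iii) $C$ irreducible with $f$ not birational onto $f(C)$ (a multiple cover), of codimension $\ge2$ by a separate count; (iv) $C$ irreducible, $f$ birational, but $f(C)$ carrying a singularity worse than a single ordinary cusp, tacnode or triple point together with nodes; and (v) $f$ an unramified birational morphism with nodal image but $f_*\mathfrak{d}$ non-reduced, which forces two of the marked points to acquire the same reduced image, hence is codimension $2$. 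Since the five substacks appearing in the definition of $\bar{\sM}_{0,n}^\Sigma(\P^2,d)_{ord}$ are exactly the complement of the union of these strata, $W$ is closed — this is (2) — and the estimates are (3).

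\emph{For (4):} properness of $\mathfrak{z}$ is inherited from that of $\Pi^\Sigma$ (equivalently, $\bar{\sM}_{0,n}^\Sigma(\P^2,d)$ is proper over $k$ and $\Sym^n(\P^2)$ is separated). Applying Lemma~\ref{lem:smoothness} at the generic point of $\bar{\sM}_{0,n}^\Sigma(\P^2,d)$ (where $C$ is irreducible and $n\le3d-1$) shows $\mathfrak{z}$ is smooth along a dense open substack $U$; as smooth morphisms are open, $\mathfrak{z}(U)$ is a nonempty, hence dense, open subset of the irreducible $\Sym^n(\P^2)$, so $\mathfrak{z}$ is dominant, and being proper it is surjective. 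The relative dimension of $\mathfrak{z}$ along $U$ is $(3d-1+n)-2n=3d-1-n$; since the closed complement $Z:=\bar{\sM}_{0,n}^\Sigma(\P^2,d)\setminus U$ has dimension $\le3d-2+n$, a standard fibre-dimension count gives that the fibre over a general point of $\Sym^n(\P^2)$ meets $Z$ in a proper closed subset, hence is generically smooth of dimension $3d-1-n$. Finally, for $n=3d-1$ and $F$ an irreducible component of $W$, (3) gives $\dim F\le(3d-1+n)-2$, so $\dim\mathfrak{z}(F)\le\dim F\le 2(3d-1)-2=\dim\Sym^{3d-1}(\P^2)-2$, i.e.\ $\codim_{\Sym^{3d-1}(\P^2)}\mathfrak{z}(F)\ge2$.

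I expect the main obstacle to be step (iv): it rests on the classical local analysis of Severi varieties of rational plane curves, namely identifying precisely which configurations of singularities of $f(C)$ can occur in codimension $\le1$ in the family of degree $d$ rational plane curves, and checking that every such configuration not already built into $\bar{\sM}_{0,n}^\Sigma(\P^2,d)_{ord}$ (one ordinary cusp, tacnode or triple point together with nodes) is in fact of codimension $\ge2$. This, together with the corresponding bookkeeping near reducible curves and near the marked divisor in (ii), (iii) and (v), is the technical heart; everything else in the argument is either cited or a routine dimension count.
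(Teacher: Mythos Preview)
Your proposal is correct and follows essentially the same strategy as the paper's proof: irreducibility and dimension via the explicit description of $\sM_{0,0}(\P^2,d)$ as triples of degree-$d$ forms modulo $\PGL_2$, openness of the ordinary locus by checking that small deformations stay in it, the codimension-$2$ estimate via case-by-case dimension counts on the bad strata, and properness/surjectivity/generic smoothness of $\mathfrak{z}$ by a normal-bundle argument. Two minor differences worth noting: for (4) you invoke Lemma~\ref{lem:smoothness} directly, whereas the paper redoes the computation $N_f\cong\sO_{\P^1}(3d-2)$ from scratch (your shortcut is cleaner); and for (3) your stratification (i)--(v) is more explicit than the paper's, which simply asserts that $D^{(i)}$ has codimension $\ge2$ for $i\ge3$ and that the higher-singularity loci have codimension $\ge2$ by unobstructedness of their local deformations, without separately treating multiple covers or the non-reduced-image locus. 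Your identification of step (iv) as the technical heart is accurate and matches what the paper leaves implicit.
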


\begin{proof} This is proven in a more general setting in \cite{J} and in \cite{KP}; in the case of $\P^2$, the proof is fairly straightforward: The fact that the tangent bundle of $\P^2$ is ample implies that each stable map $f:C\to \P^2$ of an $n$-pointed semi-stable genus zero curve $C$ deforms to a stable map of an $n$-pointed  irreducible curve $\tilde{C}\to \P^2$. For $C=\P^1$, a morphism $f:\P^1\to \P^2$ is given by three homogeneous polynomials of the same degree, $f=(f_0:f_1:f_2)$, which we may assume have no common zeros in $\P^1$. For $f([C])$ to have degree $d$, this means that the $f_i$ all have degree $d$; this gives the irreducibility. An open subset of such triples define morphisms $f$ with $C\to f(C)$ birational and among such the dimension of the moduli is given by $3(d+1)-4=3d-1$.

For (2), if $f:C\to \P^2$ is in $\bar{\sM}_{0,n}^\Sigma(\P^2, d)_{ord}$, it is easy to see that a small deformation of such maps lie in $\sM_{0,n}^\Sigma(\P^2, d)^0_\delta$, hence $\bar{\sM}_{0,n}^\Sigma(\P^2, d)_{ord}$ is open. 

For (3), using our dimension count for $\bar{\sM}_{0,n}^\Sigma(\P^2, d)$, with $d$ replaced by $d'<d$, it is easy to see that $D^{(i)}$ has codimension $\ge 2$ for all $i\ge 3$. Similarly, the deformations of maps with higher order singularities are all unobstructed, so one can calculate the codimension of maps $f$ with $f(C)$ having singularities other than ordinary nodes or at most one cusp, one tacnode or one triple point by computing the relevant local deformation space, and one again finds these all have codimension $\ge2$.

For (4), $\bar{\sM}_{0,n}^\Sigma(\P^2, d)$ is proper over $k$, hence $\mathfrak{z}:\bar{\sM}_{0,n}^\Sigma(\P^2, d)\to \Sym^n(\P^2)$ is proper.  Take $n\le 3d-1$,  let $f=(f_0: f_1: f_2):\P^1\to \P^2$ be  a morphism with the $f_i$ degree $d$ polynomials such that $f(\P^1)$ has degree $d$ and only ordinary nodes. Choose points $p_1,\ldots, p_n\in \P^1$ suitably general, so that $f(p_i)\neq f(p_j)$ for $i\neq j$ and with $f(p_i)$ a smooth point of $f(\P^1)$ for all $i$. The normal bundle of the map $f$ is then $\sO_{\P^1}(3d-2)$ and since $n\le 3d-1$,  the restriction map
\[
H^0(\P^1, N_f)\to \oplus_{i=1}^ni_{p_i}^*N_f
\]
is surjective.  This shows that for each first order deformation $q_{i,\epsilon}$  of the $q_i:=f(p_i)$ in $\P^2$, $i=1,\ldots, n$, there is a corresponding first order  deformation $(f_\epsilon, p_{1,\epsilon},\ldots, p_{n,\epsilon})$ of $(f, p_1,\ldots, p_n)$ with $f_\epsilon(p_{i,\epsilon})=q_{i,\epsilon}$, and thus the map $\mathfrak{z}$ is a smooth morphism in the neighborhood of $(f, p_1,\ldots, p_n)$. Thus $\mathfrak{z}$ is dominant, hence surjective; as $\bar{\sM}_{0,n}^\Sigma(\P^2, d)$  has dimension $3d-1+n$ and 
$\Sym^n(\P^2)$ has dimension $2n$, this gives the generic fiber dimension of $3d-1-n$. In particular, for $n=3d-1$, $\mathfrak{z}$ is generically finite. The last assertion in (4) is an immediate consequence of this and (3).
\end{proof}

\begin{definition} Fix an integer $d\ge3$. Let $\Sym^{3d-1}(\P^2)^0_{dgn}\subset 
\Sym^{3d-1}(\P^2)^0$ be the closed subset
\[
\Sym^{3d-1}(\P^2)^0_{dgn}:=\mathfrak{z}(\bar{\sM}_{0,n}^\Sigma(\P^2, d)^0\setminus \sM_{0,n}^\Sigma(\P^2, d)_\delta^0);
\]
as $\mathfrak{z}$ is proper, $\Sym^{3d-1}(\P^2)_{dgn}$ is in fact closed. We let
$\Sym^{3d-1}(\P^2)^0_\delta$ be the open subscheme $\Sym^{3d-1}(\P^2)^0\setminus \Sym^{3d-1}(\P^2)^0_{dgn}$  of $\Sym^{3d-1}(\P^2)^0$.
\end{definition}

\begin{remark}
In words, a geometric point $\mathfrak{d}$ of $\Sym^{3d-1}(\P^2)^0$ is in $\Sym^{3d-1}(\P^2)^0_\delta$ exactly when each $x:=(f_x:C_x\to \P^2_{k(x)}, \mathfrak{d})$ in $\mathfrak{z}^{-1}(\mathfrak{d})$ satisfies\\[5pt]
i.  $C_x$ smooth,  \\[3pt]
ii. $f_x:C_x\to f(C_x)$ birational \\[3pt]
iii. $f_x(C_x)_{\overline{k(x)}}$  has only ordinary double points as singularities. 
\end{remark}

\begin{lemma}\label{lem:DeltaProps} 1. The map 
\[
\mathfrak{z}: \mathfrak{z}^{-1}(\Sym^{3d-1}(\P^2)^0_\delta)\to (\Sym^{3d-1}(\P^2)^0_\delta
\]
is \'etale.\\[5pt]
2.  The generic points of $\Sym^{3d-1}(\P^2)_{degn}$ that have codimension one on 
$\Sym^{3d-1}(\P^2)^0$ are the points $\mathfrak{z}(\eta_{?})$ for 
\[
?\in \{cusp, tac, tri,\}\cup  \{(d_1, n_1)\mid 1\le d_1\le d/2, n_1=3d_1, 3d_1-1\}.
\]
\end{lemma}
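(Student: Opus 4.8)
\emph{Proof plan.} For (1) I would argue pointwise. Let $\mathfrak d$ be a geometric point of $\Sym^{3d-1}(\P^2)^0_\delta$. By the Remark preceding the statement, every $x=(f_x\colon C_x\to\P^2,\mathfrak d)$ lying over $\mathfrak d$ has $C_x$ smooth (hence geometrically irreducible), $f_x$ birational onto its image, and $f_x(C_x)$ with only ordinary double points; thus $x\in\sM_{0,3d-1}^\Sigma(\P^2,d)^0_\delta$, $f_x$ is unramified, and $\bar\sM_{0,3d-1}^\Sigma(\P^2,d)^0$ is a scheme near $x$. Lemma~\ref{lem:smoothness}(i), applied with $n=3d-1$ and $C=C_x$ irreducible, shows $\mathfrak z$ is smooth at $x$. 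Since $\bar\sM_{0,3d-1}^\Sigma(\P^2,d)$ is irreducible of dimension $6d-2$ by Lemma~\ref{lem:DimOrd}(1) and $\Sym^{3d-1}\P^2$ is irreducible of dimension $2(3d-1)=6d-2$, flatness of $\mathfrak z$ at $x$ and the dimension formula for flat morphisms force the fibre dimension at $x$ to be $0$; hence $\mathfrak z$ is smooth of relative dimension $0$, i.e.\ étale, at $x$. As $x$ was arbitrary, $\mathfrak z$ is étale over $\Sym^{3d-1}(\P^2)^0_\delta$.

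For (2) the first step is to discard the truly bad part. By Lemma~\ref{lem:DimOrd}(3)--(4) each irreducible component $F$ of $\bar\sM_{0,3d-1}^\Sigma(\P^2,d)\setminus\bar\sM_{0,3d-1}^\Sigma(\P^2,d)_{ord}$ satisfies $\codim_{\Sym^{3d-1}\P^2}\mathfrak z(F)\ge 2$, so no such $F$ contributes a codimension-one component of $\Sym^{3d-1}(\P^2)^0_{dgn}$. Since $\mathfrak z$ is étale on $\sM_{0,3d-1}^\Sigma(\P^2,d)^0_\delta$ by (1) (its image has codimension $0$), the description of $\bar\sM_{0,3d-1}^\Sigma(\P^2,d)_{ord}$ reduces the problem to deciding, for each of the loci $\sM_{0,3d-1}^\Sigma(\P^2,d)^0_{cusp}$, $\sM_{0,3d-1}^\Sigma(\P^2,d)^0_{tac,\delta}$, $\sM_{0,3d-1}^\Sigma(\P^2,d)^0_{tri,\delta}$ and $D^{(2)}_\delta\setminus D^{(3)}$, whether its image under $\mathfrak z$ is an irreducible component of dimension $6d-3$.

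For $D^{(2)}_\delta\setminus D^{(3)}$, stratify by the bidegree $(d_1,d_2)$, $d_1+d_2=d$, and by the partition $\mathfrak d=\mathfrak d_1+\mathfrak d_2$ with $\deg\mathfrak d_i=n_i$, $n_1+n_2=3d-1$; the resulting substack $Z_{d_1,n_1}$ is irreducible (a fibre product over $\P^2$ of the irreducible stacks $\sM_{0,n_i+1}(\P^2,d_i)$, using irreducibility of the evaluation-map fibres) of dimension $(3d_1+n_1)+(3d_2+n_2)-2=6d-3$. If $n_i\le 3d_i$ for both $i$, Lemma~\ref{lem:smoothness}(ii) makes $\mathfrak z$ smooth, hence étale (dimensions matching as in (1)), at a general $x\in Z_{d_1,n_1}$; as $D^{(2)}$ is a codimension-one divisor near $x$ in which $Z_{d_1,n_1}$ is dense, $\mathfrak z(Z_{d_1,n_1})$ is an irreducible codimension-one component of $\Sym^{3d-1}(\P^2)^0_{dgn}$ with generic point $\mathfrak z(\eta_{(d_1,n_1)})$. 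Because $n_1+n_2=3d-1$, the condition ``$n_i\le 3d_i$ for both $i$'' is equivalent to $n_1\in\{3d_1-1,3d_1\}$, and ordering $d_1\le d_2$ yields the index set in the statement; if instead $n_1>3d_1$ then $\mathfrak z(Z_{d_1,n_1})$ is contained in the locus of cycles $\mathfrak d_1+\mathfrak d_2$ with $\mathfrak d_1$ supported on a rational curve of degree $d_1$, of dimension $\le (3d_1-1+n_1)+2n_2$, so of codimension $\ge n_1-3d_1+1\ge 2$, and contributes nothing. For $\sharp\in\{cusp,tac,tri\}$ set $\sM^\sharp_k:=\sM_{0,k}^\Sigma(\P^2,d)_\sharp$; when nonempty $\sM^\sharp_0$ is a classical irreducible divisor (of dimension $3d-2$) in the Severi variety of rational degree-$d$ plane curves, so $\sM^\sharp_k$ is irreducible of dimension $3d-2+k$. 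I would show $\mathfrak z_k\colon\sM^\sharp_k\to\Sym^k\P^2$ is dominant for $0\le k\le 3d-2$ by induction on $k$: granting it for $k-1$, a general fibre of $\mathfrak z_{k-1}$ has dimension $3d-1-k\ge 1$, so the degree-$d$ curves of type $\sharp$ through $k-1$ general points form a positive-dimensional family of distinct curves and hence sweep out a surface in $\P^2$ (such a family cannot lie on a fixed curve), so they pass through a further general point. For $k=3d-2$ this makes $\mathfrak z_{3d-2}$ a dominant map of irreducible varieties of equal dimension $6d-4$, hence generically finite, so the general fibre of the forgetful projection $\mathfrak z(\sM^\sharp_{3d-1})\to\Sym^{3d-2}\P^2$ is the finite union of the curves of type $\sharp$ through $3d-2$ general points, of dimension $1$; therefore $\dim\mathfrak z(\sM^\sharp_{3d-1})=(6d-4)+1=6d-3$ and its closure is a codimension-one component of $\Sym^{3d-1}(\P^2)^0_{dgn}$ with generic point $\mathfrak z(\eta_\sharp)$. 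Collecting the reducible and the cusp/tacnode/triple-point cases gives the asserted list.

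The main obstacle is the generic finiteness of $\mathfrak z$ restricted to the cusp, tacnode and triple-point strata. This does not follow formally from the generic finiteness of $\mathfrak z$ on all of $\bar\sM_{0,3d-1}^\Sigma(\P^2,d)$ (Lemma~\ref{lem:DimOrd}(4)), since a priori these strata could be contained in the locus over which $\mathfrak z$ has positive-dimensional fibres; the inductive ``forget a point'' argument is precisely designed to bypass this, and it in turn rests on the nonemptiness, irreducibility and codimension-one-ness of the corresponding Severi-type boundary divisors --- facts coming from the unobstructedness of the relevant local deformation spaces, as used in the proof of Lemma~\ref{lem:DimOrd}, together with the irreducibility of the Severi variety.
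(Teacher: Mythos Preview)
Your argument for (1) is exactly the paper's: smoothness from Lemma~\ref{lem:smoothness}(i) plus the dimension equality forces \'etaleness. The paper's proof of (2) is a single sentence: ``(2) follows from Lemma~\ref{lem:DimOrd}, since for $n=3d-1$, the map $\mathfrak{z}$ has relative dimension zero.'' The intended reading is only the containment direction: by Lemma~\ref{lem:DimOrd}(3)--(4) the non-ordinary locus maps to codimension $\ge 2$, and since source and target have equal dimension each codimension-one stratum $\eta_{?}$ maps to codimension $\ge 1$; hence every codimension-one generic point of $\Sym^{3d-1}(\P^2)^0_{dgn}$ is among the $\mathfrak{z}(\eta_{?})$. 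This is all that is used in the proof of Theorem~\ref{thm:WelGlobal}, where one simply checks unramifiedness at each $\bar\eta_{?}$; if some $\bar\eta_{?}$ happened to have higher codimension there would be nothing to check there.

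Your proof is more ambitious: you establish the converse inclusion as well, showing that each listed $\mathfrak{z}(\eta_{?})$ genuinely has codimension one, and you explain why the reducible components with $n_1\notin\{3d_1-1,3d_1\}$ drop out (a point the paper's one-liner does not address, and which is only handled later, implicitly, in the proof of Proposition~\ref{prop:UnramRed}). For the tacnode and triple-point loci your ``forget a point'' induction is correct but heavier than necessary: since the generic map $f$ there is unramified, Lemma~\ref{lem:smoothness}(i) (or the later Lemma~\ref{lem:Etal2}) already gives that $\mathfrak{z}$ is \'etale at $\eta_{tac}$ and $\eta_{tri}$, so the image has the same dimension as the source. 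Only the cusp case genuinely needs extra input, and your induction handles it (the paper itself supplies this later via Proposition~\ref{prop:CuspRam1}). One caveat: your argument invokes irreducibility of the loci $\sM^\sharp_0$; the paper tacitly assumes this as well when speaking of ``the generic point'' $\eta_{cusp}$, etc., but it does not follow from unobstructedness alone.
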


\begin{proof} (1) follows from Lemma~\ref{lem:smoothness} and (2) follows from Lemma~\ref{lem:DimOrd}, since for $n=3d-1$, the map $\mathfrak{z}$ has relative dimension zero.
\end{proof}

\section{The algebraic Welschinger invariant} Fix a field $K$, not necessarily algebraically closed, and let $C\subset \P^2_K$ be a geometrically integral, and geometrically rational curve of degree $d$, such that $C_{\bar{K}}$ has only ordinary double points as singularities.  Let $p_1,\ldots, p_r$ be the singular points of $C$. Fix a defining equation $F\in K[X_0, X_1, X_2]_d$ for $C$. 

Fix $p=p_\ell$. Choose a linear form $L\in k[X_0, X_1, X_2]$ with $L(p)\neq0$ and choose local coordinates $t_1=t_1^\ell, t_2=t_2^\ell\in \sO_{\P^2, p}$. Let $f=F/L^d\in \mathfrak{m}_p\subset
\sO_{\P^2, p}$. Then we have the Hessian matrix
\[
\left(\frac{\del^2 f}{\del t_i\del t_j}\right).
\]
The fact that $p$ is an ordinary double point implies $\det\left(\del^2 f/\del t_i\del t_j\right)(p)\neq0$, which gives us the local invariant
\[
e_p(C):=\left<\det\left(\frac{\del^2 f}{\del t_i\del t_j}\right)(p)\right>\in \GW(K(p)).
\]
If we change $L$ to $L'$, then the matrix $\left(\del^2 f/\del t_i\del t_j\right)(p)$ changes to  $(L/L')^d(p)\cdot\left(\del^2 f/\del t_i\del t_j\right)(p)$, so the determinant changes by a square and the resulting element of $\GW(K(p))$  is unchanged. If we change coordinates, say to $s_1, s_2$, then the Hessian matrix changes by
\[
\left(\frac{\del^2 f}{\del s_i\del s_j}\right)(p)=\left(\frac{\del t_i}{\del s_j}\right)^t(p)\left(\frac{\del^2 f}{\del t_i\del t_j}\right)(p)\left(\frac{\del t_i}{\del s_j}\right)(p),
\]
so again the Hessian determinant changes by a square. Finally, if we replace $F$ with $\lambda\cdot F$, $\lambda\in K^\times$, then  the Hessian determinant changes by $\lambda^2$. Thus,  the local invariant $e_p(C)$ is  well-defined, independent of the choices we have made.

We define $\Wel_K(C)$ to be  the 1-dimensional form
\[
\Wel_K(C) = \left<\prod_\ell \Nm_{K(p_\ell)/K} \det\left(\frac{\del^2 f}{\del t_i^\ell\del t^\ell_j}\right)(p_\ell)\right>.
\]
As the individual determinants  $\det\left(\frac{\del^2 f}{\del t_i^\ell\del t^\ell_j}\right)(p_\ell)$  are well-defined modulo squares in $K(p_\ell)^\times$, the product of norms is well-defined modulo squares in $K^\times$, so the one-dimensional quadratic form  $\Wel_K(C) \in \GW(K)$ is well-defined.
It is clear from the construction that $\Wel_K(C)$  is functorial with respect to field extensions.

\begin{lemma}\label{WelschFunct} Let $C\subset \P^2_K$ be a geometrically reduced plane curve such that $C_{\bar{K}}$ is rational and has only ordinary double points as singularities, let $L\supset K$ be an extension field and let $X_L:=C\times_KL$. Then $\Wel_L(C_L)\in \GW(L)$ is the image of $\Wel_K(C)\in \GW(K)$ under the base-extension map $\GW(K)\to \GW(L)$.
\end{lemma}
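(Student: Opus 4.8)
The plan is to unwind the definition of $\Wel_K(C)$ and to check that every ingredient commutes with the flat base change $\Spec L \to \Spec K$. First I would verify that the hypotheses pass to $C_L$: fixing compatible embeddings $K\hookrightarrow\bar K\hookrightarrow\bar L$ and $K\hookrightarrow L\hookrightarrow\bar L$ one has $(C_L)_{\bar L}=C_{\bar K}\times_{\bar K}\bar L$, and since $\bar K$ is perfect and reducedness, rationality and the property of having only ordinary double points are all preserved under extension of algebraically closed fields, $C_L$ again satisfies the hypotheses and $\Wel_L(C_L)$ is defined. Next, since $\Char K\neq2$, the singular locus $\Sing(C)$ is finite and étale over $K$ — its geometric points are the nodes of $C_{\bar K}$, which constitute a reduced subscheme — so $\Sing(C)=\{p_1,\dots,p_r\}$ with $K(p_\ell)/K$ finite separable, as in the construction. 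Smoothness is stable under base change, so $\Sing(C_L)=\Sing(C)\times_K L$; writing $K(p_\ell)\otimes_K L=\prod_m L(q_{\ell,m})$ as a finite product of finite separable field extensions of $L$, the singular points of $C_L$ are precisely the closed points $q_{\ell,m}$, each lying over $p_\ell$.

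I would then compute $\Wel_L(C_L)$ using, at each $q_{\ell,m}$, the base changes of the data chosen over $K$: the same equation $F$ (now in $L[X_0,X_1,X_2]_d$), the same linear form $M$ (which satisfies $M(q_{\ell,m})\neq0$ because $M(p_\ell)\neq0$), and the images of the local parameters $t_1^\ell,t_2^\ell$ — these images form a regular system of parameters in $\sO_{\P^2_L,q_{\ell,m}}$ since $\sO_{\P^2_K,p_\ell}/(t_1^\ell,t_2^\ell)=K(p_\ell)$ becomes $L(q_{\ell,m})$ after $-\otimes_K L$ and localization. This is permitted by the independence of the construction from the auxiliary choices established just before the lemma, and with these choices $f=F/M^d$ over $L$ is literally the image of $f$ over $K$. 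Because $K(p_\ell)/K$ is separable, $\Omega_{\sO_{\P^2_K,p_\ell}/K}$ is free with basis $dt_1^\ell,dt_2^\ell$, so the operators $\partial/\partial t_i^\ell$ are well-defined $K$-derivations, and since Kähler differentials commute with base change they are compatible with the base-change map $\sO_{\P^2_K,p_\ell}\to\sO_{\P^2_L,q_{\ell,m}}$. Hence the Hessian matrix of $f$ over $L$ is the image of the Hessian over $K$, and $\det(\partial^2 f/\partial t_i^\ell\partial t_j^\ell)(q_{\ell,m})$ is the image of $a_\ell:=\det(\partial^2 f/\partial t_i^\ell\partial t_j^\ell)(p_\ell)$ in $L(q_{\ell,m})^\times$.

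Finally I would assemble the norms. Compatibility of the field norm with base change (the determinant of multiplication by $a_\ell$ on $K(p_\ell)\otimes_K L=\prod_m L(q_{\ell,m})$ is the product over $m$ of the determinants on the factors) shows that $\Nm_{K(p_\ell)/K}(a_\ell)\in K^\times$ maps to $\prod_m\Nm_{L(q_{\ell,m})/L}(a_{\ell,m})\in L^\times$, where $a_{\ell,m}$ denotes the image of $a_\ell$ in $L(q_{\ell,m})$. Taking the product over $\ell$ and using that $\GW(K)\to\GW(L)$ carries $\langle c\rangle$ to $\langle c\rangle$, the image of $\Wel_K(C)=\langle\prod_\ell\Nm_{K(p_\ell)/K}(a_\ell)\rangle$ in $\GW(L)$ equals $\langle\prod_\ell\prod_m\Nm_{L(q_{\ell,m})/L}(a_{\ell,m})\rangle=\Wel_L(C_L)$. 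I expect no genuine obstacle here; the whole argument is bookkeeping of how the étale residue algebras $K(p_\ell)$ split over $L$ together with the base-change formula for norms. The only point requiring a little care is to make the choices over $L$ consistently with those over $K$, which is what upgrades the intermediate identifications from equalities modulo squares to honest equalities — and this is exactly what the previously established independence of $\Wel$ from the choices legitimizes.
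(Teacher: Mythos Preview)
Your proposal is correct. The paper gives no proof at all for this lemma, simply remarking just before its statement that ``It is clear from the construction that $\Wel_K(C)$ is functorial with respect to field extensions.'' Your argument is exactly the detailed unwinding of this claim: you make explicit the base-change compatibility of the singular locus (as a finite \'etale $K$-scheme), of the local parameters and Hessian determinants, and of the norm maps, which is precisely what ``clear from the construction'' is gesturing at. There is no alternative route here---the statement is a bookkeeping check and you have carried it out carefully.
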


An essential aspect of the algebraic Welschinger invariant is the following extension property.

\begin{lemma}\label{lem:WelschExt} Let $\sO$ be a DVR containing $k$, with quotient field $K$ and residue field $\kappa$ and let $\pi:\sC\subset \P^2_{\sO}$ be a flat family of plane curves over $\Spec \sO$ defined by a homogeneous $F\in \sO[X_0, X_1, X_2]$.  Let $\sC_\sing\subset \sC$ be the subscheme of singularities of $\sC$, that is, the closed subscheme defined by the ideal $(F, \del F/\del X_0,\del F/\del X_1, \del F/\del X_2)$. Suppose that $\sC_\sing$ contains a closed subscheme $\sD\subset \sC_\sing$ such that\\[5pt]
i. $\sD$ is \'etale over $\sO$ \\[3pt]
ii. $\sD_K=\sC_{\sing, K}$\\[3pt]
iii. Each $x\in \sD_k\subset \sC_k$ is an ordinary double point of $\sC_k$ and each point $y\in \sD_K$ is an ordinary double point of $\sC_K$\\[5pt]
 Then $\Wel_K(\sC_K)\in \GW(K)$ extends to an element $\Wel(\sD)\in \GW(\sO)$. If in addition $\sC_\sing$ satisfies (i)- (iii), then taking $\sD:=\sC_\sing$,  $\Wel(\sD)$ has image $\Wel_\kappa(\sC_\kappa)$ in $\GW(\kappa)$.
\end{lemma}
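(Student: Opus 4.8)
The plan is to reduce the extension statement to a purely local computation at the finitely many points of $\sD_k$, using that the formation of $\Wel$ is a product of norms of Hessian determinants. First I would note that $\sD$, being finite étale over the DVR $\sO$, decomposes as a disjoint union $\sD = \coprod_\ell \Spec \sO_\ell$ where each $\sO_\ell$ is a DVR finite étale over $\sO$, with quotient field $K_\ell$ a finite separable extension of $K$ and residue field $\kappa_\ell$ a finite separable extension of $\kappa$; moreover the generic fibre $\sD_K = \sC_{\sing,K}$ is exactly the set of singular points $p_\ell$ of $\sC_K$, so $K_\ell = K(p_\ell)$. Thus $\Wel_K(\sC_K) = \left\langle \prod_\ell \Nm_{K_\ell/K} h_\ell \right\rangle$ where $h_\ell \in K_\ell^\times$ is the Hessian determinant at $p_\ell$, well-defined modulo squares. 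The key point is to show each $h_\ell$ can be chosen to lie in $\sO_\ell^\times$, i.e.\ is a unit in the local ring at the $\sO$-point of $\sD$; granting that, $\Nm_{\sO_\ell/\sO} h_\ell \in \sO^\times$, the product $\prod_\ell \Nm_{\sO_\ell/\sO} h_\ell$ lies in $\sO^\times$, and $\Wel(\sD) := \left\langle \prod_\ell \Nm_{\sO_\ell/\sO} h_\ell \right\rangle \in \GW(\sO)$ is the desired extension, since $\GW(\sO) \to \GW(K)$ is induced by $\sO \hookrightarrow K$ and specialization $\GW(\sO)\to\GW(\kappa)$ is induced by $\sO \surj \kappa$.

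Next I would carry out the local unit computation. Work at one component, so rename $\sO = \sO_\ell$, $K = K_\ell$, $\kappa = \kappa_\ell$, and let $\sigma: \Spec\sO \to \sC$ be the corresponding section landing in $\sC_\sing$, with generic point an ordinary double point $p$ of $\sC_K$ and special point $\bar p$ an ordinary double point of $\sC_k$ (hypothesis (iii)). Choose a linear form $L \in \sO[X_0,X_1,X_2]$ with $L$ nonvanishing along the image of $\sigma$ (possible after perhaps localizing $\sO$, since $\sO$ is local and we only need $L(\bar p)\neq 0$), and choose, in a Zariski neighborhood of $\sigma$ in $\P^2_\sO$, relative coordinates $t_1, t_2$ centered along $\sigma$; concretely take $t_i = X_{a_i}/L - \sigma^*(X_{a_i}/L)$ for suitable $a_i$, so that $t_1, t_2$ generate the ideal of $\sigma$ and restrict to local coordinates on each fibre. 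Set $f = F/L^d$, an element of the local ring, and form the relative Hessian $H = \bigl(\partial^2 f/\partial t_i \partial t_j\bigr)$, a $2\times 2$ matrix over $\sO[t_1,t_2]$ localized near $\sigma$; its determinant pulled back along $\sigma$ gives $h := \sigma^*\det H \in \sO$. The generic fibre of $h$ is (modulo squares) the Hessian determinant defining $e_p(\sC_K)$, and the special fibre is the Hessian determinant defining $e_{\bar p}(\sC_k)$. By hypothesis (iii), $\bar p$ is an ordinary double point of $\sC_k$, so the image of $h$ in $\kappa$ is nonzero — hence $h \in \sO^\times$. This is the crux: it says precisely that the family of nodes does not degenerate to a worse singularity along $\sD$. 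The remaining checks — that $h$ is well-defined modulo $(\sO^\times)^2$ independently of the choices of $L$ and of the coordinates $t_i$ — are exactly the same square-change computations already carried out over a field in the construction of $e_p(C)$, now performed over $\sO$: changing $L$ to $L'$ multiplies $h$ by $(L/L')^{2d}(\sigma)$, an element of $(\sO^\times)^2$; changing coordinates multiplies $h$ by the square of a Jacobian determinant, which is a unit since the coordinate change is an isomorphism of $\sO$-schemes near $\sigma$; rescaling $F$ by $\lambda \in \sO^\times$ multiplies $h$ by $\lambda^2$.

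To assemble the global statement, I would reassemble the components and define $\Wel(\sD) = \left\langle \prod_\ell \Nm_{\sO_\ell/\sO}(h_\ell) \right\rangle \in \GW(\sO)$, which is well-defined modulo squares by the local analysis and compatible with norms since $\sO_\ell/\sO$ is finite étale. Base-changing along $\sO \hookrightarrow K$ recovers $\left\langle \prod_\ell \Nm_{K(p_\ell)/K}(h_\ell) \right\rangle = \Wel_K(\sC_K)$ by Lemma~\ref{WelschFunct} and the compatibility of norm with base change, using $\sD_K = \sC_{\sing,K}$ from (ii). For the final clause: if $\sC_\sing$ itself satisfies (i)--(iii), take $\sD = \sC_\sing$; then $\sD_k = \sC_{\sing,k}$, and by (iii) every point of $\sD_k$ is an ordinary double point of $\sC_k$, so $\sC_k$ has only ordinary double points as singularities and $\Wel_\kappa(\sC_\kappa)$ is defined; specializing $\Wel(\sD)$ along $\sO \surj \kappa$ replaces each $h_\ell$ by its residue, which is the Hessian determinant at $\bar p_\ell$, and the norms $\Nm_{\sO_\ell/\sO} \to \Nm_{\kappa_\ell/\kappa}$ are compatible with reduction, so the specialization is $\left\langle \prod_\ell \Nm_{\kappa_\ell/\kappa}(\bar h_\ell) \right\rangle = \Wel_\kappa(\sC_\kappa)$.

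I expect the main obstacle to be bookkeeping rather than conceptual: one must be careful that the coordinates $t_i$ can be chosen \emph{simultaneously} over $\sO$ (not just fibrewise) in a neighborhood of the section $\sigma$, so that $\det H$ genuinely lies in the local ring and its pullback along $\sigma$ specializes correctly on both fibres; this uses that $\sigma(\Spec \sO)$ is contained in an affine chart of $\P^2_\sO$ and that $\sigma^*$ is a ring homomorphism compatible with $\sO \to K$ and $\sO \to \kappa$. A secondary point is to make sure the normalization/square-class ambiguities are tracked uniformly over $\sO$ so that the single class $\Wel(\sD) \in \GW(\sO)$ maps correctly to both $\GW(K)$ and $\GW(\kappa)$; but since $\sO$ is a DVR and $\GW(\sO) \to \GW(K)$ is injective (indeed $\sO$ is local, so $\sO^\times/(\sO^\times)^2 \to K^\times/(K^\times)^2$ is injective), uniqueness of the extension is automatic once existence is established.
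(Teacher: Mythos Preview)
Your proposal is correct and follows essentially the same approach as the paper: construct the extension directly by choosing a linear form $L$ and local parameters, observing that the Hessian determinant at each section is a unit in the relevant local ring because the special fiber point is an ordinary node, and then taking the product of norms. The paper adds one preliminary step you omit: it first reduces to the case where $\sO$ is henselian (using that the boundary map $\GW(K)\to W(\kappa)$ factors through $\GW(K^h)$), which is what justifies your decomposition $\sD=\coprod_\ell\Spec\sO_\ell$ with each $\sO_\ell$ a \emph{local} ring---over a non-henselian DVR a connected finite \'etale cover need not be local, so your $\sO_\ell$ are a priori only semi-local Dedekind domains. This is a minor bookkeeping point (your argument goes through verbatim once you henselize, or alternatively works with semi-local $\sO_\ell$ since the Hessian is a unit at every closed point), not a conceptual gap.
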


\begin{proof} From the purity theorem for $\GW$ \cite[\S 2]{MorelA1Top}, we have the short exact sequence
\[
0\to \GW(\sO)\to \GW(K)\xr{\del_t}\GW(\kappa)\to 0
\]
where $t\in \sO$ is a uniformizing parameter. Letting $\sO^h$ denote the henselization of $\sO$ and $K^h$ the quotient field of $\sO^h$, we have the commutative diagram
\[
\xymatrix{
\GW(K)\ar[r]^{\del_t}\ar[d]&\GW(\kappa)\ar@{=}[d]\\
\GW(K^h)\ar[r]_{\del_{t^h}}&\GW(\kappa)
}
\]
where $t^h\in \sO^h$ is the image of $t$. Thus, we may suppose that $\sO$ is henselian. 

In that case, the closed points $x_1,\ldots, x_m$ of $\sD_k$ lift to subschemes $\tilde{x}_1,\ldots, \tilde{x}_m$ of $\sC_\sing$ such that $\sD=\amalg_{i=1}^m\tilde{x}_i$,  each $\tilde{x}_i$ is \'etale over $\sO$ and $x_i=\tilde{x}_i\times_\sO \kappa$. In addition, from (iii), we know that $\sC_K$ has only ordinary double points as singularities and from (ii),  that $\sD_K=\sC_{K, \sing}$.

We may choose a linear form $L\in \sO[X_0, X_1, X_2]$ with $L=0$ disjoint from  $\sC_\sing$ and local parameters $t^\ell_1, t^\ell_2$ for $\sO_{\P^2_\sO, \tilde{x}_\ell}$ for each $\ell$,  and use these to compute the Hessian determinant $\det(\del^2f/\del t_i^\ell \del t_j^ \ell)$ of $f:=F/L^d$ at each $\tilde{x}_\ell$. Since $(\tilde{x}_\ell)\times_{\sO}\kappa$ is an ordinary double point of $\sC_\kappa$, it follows that $\det(\del^2f/\del t_i^\ell \del t_j^ \ell)(\tilde{x}_\ell)$ is a unit in $\sO_{\tilde{x}_\ell}$ for each $\ell$ and thus the one-dimensional form
\[
\left <\prod_{\ell=1}^m\Nm_{\sO_{\tilde{x}_\ell}/\sO}\det\left(\frac{\del^2f}{\del t_i^\ell \del t_j^ \ell}\right)(\tilde{x}_\ell)\right>\in \GW(\sO)
\]
gives an element  $\Wel_\sO(\sD)\in \GW(\sO)$. By conditions (ii), (iii),   $\Wel_K(\sC_K)\in\GW(K)$ is the image of $\Wel_\sO(\sD)$ under the natural map $\GW(\sO)\to \GW(K)$. If  $\sC_\sing$ satisfies (i)-(iii), then $\Wel_\sO(\sC_\sing)\in \GW(\sO)$ maps to $\Wel_\kappa(\sC_\kappa)$ in $\GW(\kappa)$, by the compatibility of $\Nm$ and restriction for finite \'etale maps of semi-local schemes. 
\end{proof}

Let $K$ be a field, $S$ a reduced  $K$-scheme essentially of finite type over $K$ and let $\sC\subset S\times\P^2$ be a flat family of plane curves over $S$. Let $U_i\subset \P^2$ be the affine open subscheme $X_i\neq0$. Let $\sC_\sing\subset \sC$ be the ``relative singular locus'' of $\sC$, that is, if $V\subset S$ is an open subscheme and $F_V\in \sO_S(V)[X_0, X_1, X_2]$ is a defining equation for $V\times_S\sC$ in $V\times\P^2$, then   $V\times_S\sC_\sing$ is the closed subscheme of $\sC\cap V\times U_i$ defined by on $(F_V/\del X_j,  F_V/\del X_k)$, where $\{i,j,k\}=\{0,1,2\}$.

\begin{lemma}\label{lem:EtCrit} Suppose that $\Char K\neq 2$ and that $S$ is smooth over $K$
Let $\sD\subset \sC$ be a closed subscheme which is an open and closed component of $\sC_\sing$. Suppose that for each geometric point $\bar{s}$ of $S$, each point  $p\in \sD_{\bar{s}}$ is an ordinary double point of $\sC_{\bar{s}}$. Suppose in addition that $\sD$ is locally equi-dimensional over $S$ of relative dimension zero. Then  $\sD\to S$ is \'etale.

In particular, to check the condition (1) of Lemma~\ref{lem:WelschExt}, we need only see that $\sD$ is equi-dimensional over  $\Spec\sO$.
\end{lemma}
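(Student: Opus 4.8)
The plan is to reduce to a statement local on $S$ and to show that, near each point of $\sD$, the subscheme $\sD$ coincides with the ``critical scheme'' cut out by the partial derivatives of a local equation of $\sC$, and that this critical scheme is visibly \'etale over $S$. So fix $z\in\sD$ with image $s\in S$; it suffices to prove that $\sD\to S$ is \'etale at $z$. I would work on an affine chart $\A^2_S=S\times\A^2\subset S\times\P^2$ with affine coordinates $x,y$, pick a local equation $f$ of $\sC$ there (so $\sC=V(f)$), and observe that unwinding the definition of $\sC_\sing$ gives $\sC_\sing=V(f,f_x,f_y)$ on this chart, where $f_x=\partial f/\partial x$, $f_y=\partial f/\partial y$. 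Since $\sD$ is open in $\sC_\sing$, I shrink to an open $W\ni z$ in $\A^2_S$ with $\sD\cap W=\sC_\sing\cap W$.

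The first step is to show that $V(f_x,f_y)\to S$ is \'etale in a neighbourhood of $z$. Consider the $S$-morphism $\Phi=(f_x,f_y)\colon\A^2_S\to\A^2_S$ (on rings, $u\mapsto f_x$, $v\mapsto f_y$): its relative Jacobian is the Hessian matrix of $f$ in the coordinates $x,y$, whose determinant is a unit at $z$ by the hypothesis that $\sD$ consists of ordinary double points of the geometric fibres of $\sC/S$ — this is exactly the nondegeneracy used in \S2 to define $e_p$, applied to a geometric point of $\sD$ over $z$. By the Jacobian criterion for a morphism between $S$-smooth schemes of the same relative dimension, $\Phi$ is \'etale at $z$; since $V(f_x,f_y)=\Phi^{-1}(\sigma(S))$ for $\sigma$ the zero-section of the target copy of $\A^2_S$, and the structure morphism $V(f_x,f_y)\to S$ is the base change of $\Phi$ along $\sigma$, it follows that $V(f_x,f_y)\to S$ is \'etale at $z$. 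I then shrink $W$ so that $Y:=V(f_x,f_y)\cap W$ is \'etale over $S$. (Alternatively, $V(f_x,f_y)\to S$ is unramified at $z$ because $\Omega_{V(f_x,f_y)/S}$ is there the cokernel of the Hessian, and flat at $z$ by miracle flatness, being cut out in the regular scheme $\A^2_S$ by two equations with zero-dimensional fibre at $z$, hence a codimension-two complete intersection.)

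The second step, which I expect to be the real obstacle, uses the equidimensionality hypothesis. On $W$ we have $\sD\cap W=\sC_\sing\cap W=V(f)\cap Y$, so $\sD$ is the closed subscheme of $Y$ defined by the single function $g:=f|_Y$. As $Y$ is \'etale over the regular scheme $S$ it is itself regular, so after shrinking $W$ we may take $Y$ integral with $\dim\sO_{Y,z}=\dim\sO_{S,s}=:n$; and the equidimensionality hypothesis (together with the quasi-finiteness of $\sD\to S$, which we have just seen) gives $\dim\sO_{\sD,z}=n$ as well. But $\sO_{Y,z}$ is a domain and $g\in\mathfrak{m}_z$, so if $g\neq0$ then $g$ is a nonzerodivisor and Krull's principal ideal theorem forces $\dim\sO_{\sD,z}=\dim(\sO_{Y,z}/(g))=n-1$, a contradiction. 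Hence $g=0$ near $z$, so $\sD$ agrees with $Y$ on a neighbourhood of $z$, and therefore $\sD\to S$ is \'etale at $z$; as $z$ was arbitrary, $\sD\to S$ is \'etale. The crux is precisely this comparison: a priori $\sC_\sing$ carries the extra equation $f$ and may be strictly smaller than $V(f_x,f_y)$ — a family acquiring an isolated node over a single point of $S$ is exactly such a case — and it is the equidimensionality of $\sD$ over $S$ that rules this out, the remaining input being standard.

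For the concluding assertion, one applies the above with $S=\Spec\sO$, which is regular (all that the argument uses of $S$); the hypotheses of the Lemma other than equidimensionality of $\sD$ over $\Spec\sO$ are furnished by conditions (ii)--(iii) of Lemma~\ref{lem:WelschExt}, and over the discrete valuation ring $\sO$ the equidimensionality amounts to the statement that no irreducible component of $\sD$ lies in the special fibre — the usual criterion for flatness over a DVR.
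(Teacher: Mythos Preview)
Your proof is correct and follows essentially the same approach as the paper: both first show that the ``critical scheme'' $V(f_x,f_y)$ is \'etale over $S$ near $z$ (using nondegeneracy of the Hessian at an ordinary double point), and then use the equidimensionality hypothesis to conclude that the a priori smaller closed subscheme $\sD=V(f,f_x,f_y)$ actually coincides with $V(f_x,f_y)$ locally. Your phrasing of the first step via the Jacobian criterion for $\Phi=(f_x,f_y)$ and of the second step via Krull's principal ideal theorem is a bit more explicit than the paper's (which argues via local complete intersection/flatness and then a topological containment argument), but the content is the same.
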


\begin{proof} Let $\bar{s}$ be a geometric point of $S$ and take $p\in \sC_{\bar{s}}$ an ordinary double point. Suppose $p\in V\times U_0$ for some open $V\subset S$ such that $\sC\cap V\times \P^2$ admits a defining equation $F_V\in \sO_X(V)[X_0, X_1, X_2]$. Let $\sJ\subset V\times U_0$   be the closed subscheme  defined by $(\del F_V/\del X_1, \del F_V/\del X_2)$. Then $\sC_\sing\cap V\times U_0=(\sC\cap V\times U_0)\cap \sJ$.   

In addition, the fact that $p\in \sC_{\bar{s}}$ is an ordinary double point  and the characteristic is different from 2 implies that 
$\sJ_{\bar{s}}=p$ in a neighborhood of $p$, and thus $\sJ$ has codimension two in $V\times U_0$ in   a neighborhood $W$ of $p$  in $V\times U_0$, that is $\sJ$ is a local complete intersection of codimension two in $W$. 

The fact that  $\sJ_{\bar{s}}=p$ in a neighborhood of $p$ in $\P^2_{k(\bar{s})}$ and as $\sJ$ is a local complete intersection of codimension two in $W$ implies that $\sJ\cap W$ is flat over $S$ and that  the canonical map
\[
\mathfrak{m}_{S,\bar{s}}/\mathfrak{m}_{S,\bar{s}}^2\to  \mathfrak{m}_{\sJ,p}/\mathfrak{m}_{\sJ,p}^2
\]
is an isomorphism, that is,  shrinking $W$ if necessary,  $\sJ\cap W$ is \'etale over $S$. In particular, $\sJ\cap W$ is reduced. If now $p$ is a point of $\sD_{\bar{s}}$ then after shrinking $W$ we have the 
\[
 \sD\cap W\subset \sC_\sing\cap W=\sC\cap \sJ\cap W\subset \sJ\cap W
\]
Since $\sD$ is a closed subscheme of $\sC$, $\sD$ is proper over $S$. Under the assumption that $\sD\to S$ is locally equi-dimensional of relative dimension zero, the fact that $\sJ\cap W$ is \'etale over $S$ implies that the closed subscheme $\sD\cap W$ of $\sJ\cap W$ contains an open neighborhood of $p$ in $\sJ\cap W$, and thus, after shrinking $W$ again, we have $\sD\cap W=\sJ\cap W$, and thus $\sD\cap W$ is \'etale over $S$.   
\end{proof}

\begin{remark} Suppose we are in the situation of Lemma~\ref{lem:WelschExt}, but only (i) and (iii) hold. We can form a ``partial invariant'' $\Wel_K(\sC_K,\sD_K)\in \GW(K)$ by 
\[
\Wel_K(\sC_K,\sD_K):=\prod_{\ell,\  p_\ell\in \sD_K} \Nm_{K(p_\ell)/K}e_{p_\ell}(\sC_K).
\]
Then the proof of Lemma~\ref{lem:WelschExt} shows that $\Wel_K(\sC_K,\sD_K)$ extends to an element $\Wel(\sC, \sD)\in \GW(\sO)$ and $\Wel(\sC, \sD)$ has image $\Wel(\sC_\kappa, \sD_\kappa)$ in $\GW(\kappa)$.
\end{remark}

\begin{remark}\label{rem:WelComp1} Take a curve $C\subset \P^2_\R$ and assume that $C$ is geometrically rational and integral and has only ordinary double points. If $p\in C$ is a double point with $\R(p)=\C$, then the local invariant $\Nm_{\C/\R}(e_p(C))$ is a positive real number, so $\<\Nm_{\C/\R}(e_p(C))\>=\<+1\>\in \GW(\R)$. If $\R(p)=\R$, there are two cases: the local defining equation in suitable analytic coordinates is either  $x^2-y^2=0$ or $x^2+y^2=0$. In the first case, $e_p(C)=\<-4\>=\<-1\>$ and in the second $e_p(C)=\<4\>=\<+1\>$. Thus $\Wel_\R(C)=\<(-1)^r\>\in \GW(\R)$ where $r$ is the number of real non-isolated double points and has signature $(-1)^r$.  This signature does not agree with Welschinger's mass, which is $(-1)^s$, $s$ being the number of real  isolated double points. However, for fixed degree $d$, the arithmetic genus of $C$ is $p_a(C)=p_a(d):=(d-1)(d-2)/2$, and thus the rational curve $C$ has $p_a(d)$ double points over $\C$. Since the number of complex double points is even, we have $s+r\equiv p_a(d)\mod 2$, so the signature of our invariant $\Wel_\R(C)$  is simply $(-1)^{p_a(d)}$ times Welschinger's mass.
\end{remark}

\section{Maps and curves} 

We have the projective space $\P(H^0(\P^2, \sO_{\P^2}(d)))=\P^{N_d}$, $N_d=d(d+3)/2$, parametrizing plane curves of degree $d$, with universal curve 
\[
\xymatrix{
&\sC_d\ar[dl]\ar[dr]\\
\P^{N_d}&&\P^2
}
\]
Let $P_{0,d}\subset \P^{N_d}$ be the locally closed subscheme of curves $C$ with (geometric) normalization isomorphic to $\P^1$ and let $P_{\delta, 0,d}\subset P_{0,d}$ be the open subscheme of $P_{0,d}$ of curves $C$ which have only ordinary double points (after base-extension to an algebraically closed field). Let 
$\bar{P}_{0,d}\subset \P^{N_d}$ be the closure of $P_{0,d}$. We let $\bar\sC_{0,d}\subset \sC_d$ be the pull-back $\sC_d\times_{\P^{N_d}}\bar{P}_{0,d}$ and $\sC_{\delta, 0,d}\subset \sC_{0,d}\subset \bar{\sC}_{0,d}$ the open subschemes  $\sC_d\times_{\P^{N_d}}\times P_{\delta, 0,d}\subset \sC_d\times_{\P^{N_d}}P_{0,d}$. 

We map $\bar{\sM}_{0,n}^\Sigma(\P^2, d)$ to $\P^{N_d}$ by associating to $f:C\to \P^2$ the Cartier divisor $f_*([C])$ on $\P^2$. To be precise, the construction of $\bar{\sM}_{0,n}(\P^2,d)$ by \cite{AbramOort} gives a presentation of  $\bar{\sM}_{0,n}(\P^2, d)$ as a quotient stack,
\[
\bar{\sM}_{0,n}(\P^2, d)=[\PGL_{M+1}\backslash V]
\]
where $V$ is a smooth quasi-projective $k$-scheme with a universal $n$-pointed quasi-stable genus 0 curve $\pi:\sP_V\to V$, $s_i:V\to \sP_V$, $i=1,\ldots, n$ and stable map
$f_V:\sP_V\to \P^2_V$ over $V$ with $f_V^*\sO_{\P^2}(1)$ of degree $d$ over $V$. This entire data has a $\PGL_{M+1}$-action arising from an embedding $\sP_V\to \P^M_V$. 

We have the free sheaf $W_{V,d}:=H^0(\P^2, \sO_{\P^2}(d))\otimes_k \sO_V$ on $V$ and the map $f_V$  gives rise to the map 
\[
\res_{\sP_V}:W_{V,d}\to \pi_*f_V^*\sO_{\P^2}(d). 
\]
At each point $v\in V$, the kernel of $\res_{\sP_V}\otimes k(v)$ is the one-dimensional $k$-vector space spanned by a defining equation for the degree $d$ divisor $f_{V*}(\sP_{V}\otimes k(v))$ on $\P^2_{k(v)}$ and thus the kernel of $\res_{\sP_V}$ is an invertible sheaf $\sL_V$ on $V$. The $\PGL_{M+1}$-action on $\sP_V$ lifts to a $\GL_{M+1}$-action on $f_V^*\sO_{\P^2}(d)$, giving rise to a $\GL_{M+1}$-action on $\sL_V$ over the $\PGL_{M+1}$-action on $V$. Letting
\[
p_1: \bar{\sM}_{0,n}(\P^2, d)\times \P^2\to \bar{\sM}_{0,n}(\P^2, d),\ 
p_2: \bar{\sM}_{0,n}(\P^2, d)\times \P^2\to \P^2
\]
be the projections, $\sL_V$ defines by descent an invertible subsheaf $\sL$ of $p_{1*}(p_2^*\sO_{\P^2}(d))$, giving the relative Cartier divisor $\sC\subset  \bar{\sM}_{0,n}(\P^2, d)\times \P^2$  receiving the universal stable map $f:\sP\to  \bar{\sM}_{0,n}(\P^2, d)\times \P^2$ over $\bar{\sM}_{0,n}(\P^2, d)$.

Permuting the pointings $\{s_i\}$ gives a $\mathfrak{S}_n$-action on $\sP_V\to V$, commuting with the $\PGL_{M+1}$-action and so this whole picture descends to give a closed substack $\sC^\Sigma\subset \bar{\sM}_{0,n}^\Sigma(\P^2, d)\times \P^2$ receiving the universal stable map $f:\sP^\Sigma\to  \bar{\sM}_{0,n}^\Sigma(\P^2, d)\times \P^2$ over $\bar{\sM}_{0,n}^\Sigma(\P^2, d)$.

The family $\sC^\Sigma_d\subset \bar{\sM}_{0,n}^\Sigma(\P^2, d)\times \P^2$ arises from the universal family $\sC_d\subset \P^{N_d}\times\P^2$ via a morphism 
$\bar{\sM}_{0,n}^\Sigma(\P^2, d)\to \P^{N_d}$. The image of the dense open substack  $\sM_{0,n}^\Sigma(\P^2, d)_\delta$ of $\bar{\sM}_{0,n}^\Sigma(\P^2, d)$ is contained in $P_{0,d}$ and as 
$\bar{\sM}_{0,n}^\Sigma(\P^2, d)$ is integral,   we have the morphism
\[
\mathfrak{D}:\bar{\sM}_{0,n}^\Sigma(\P^2, d)\to \bar{P}_{0,d}
\]
and the diagram
\begin{equation}\label{eqn:BasicDiagram}
\xymatrix{
&\sP^\Sigma_0\ar[rd]^{f^\Sigma_0}\ar[d]\\
&\bar{\sM}_{0,n}^\Sigma(\P^2, d)^0\ar[dr]^{\mathfrak{D}^0}\ar[dl]_{\mathfrak{z}}&\sC_{0,d}\ar[d]\ar@{^(->}[r]&\bar{P}_{0,d}\times\P^2\ar[dl]^{p_1}\\
\Sym^n(\P^2)^0&&\bar{P}_{0,d}
}
\end{equation}
Here $\sP^\Sigma_0$ is the restriction of the universal curve stack to $\bar{\sM}_{0,n}^\Sigma(\P^2, d)^0$, $\sC_{0,d}$ is the restriction of $\sC_d$ to  $\bar{P}_{0,d}$ and $f^\Sigma_0:\sP^\Sigma_0\to \sC_{0,d}$ is the restriction of the universal morphism.

Clearly $\mathfrak{D}$ is proper, and as $\mathfrak{D}$ has all nodal rational curves of degree $d$ in its image, $\mathfrak{D}$ is surjective. Thus $\mathfrak{D}^0$ is dominant.

\begin{definition} 1. Let $y$ be a point of $P_{\delta, 0,d}$. Then the corresponding curve $\sC_y\subset \P^2_{k(y)}$ has only ordinary double points and so its Welschinger invariant $\Wel_{k(y)}(\sC_y)\in \GW(k(y))$ is defined.\\[5pt]
2. Let $x$ be a point of $\sM_{0,n}^\Sigma(\P^2, d)_\delta^0$. Then $\mathfrak{D}(x)$ is in 
$P_{\delta, 0,d}$, we have the map of fields $\mathfrak{D}^*:k(\mathfrak{D}(x))\to k(x)$, and we set 
\[
\Wel_\delta(x):=\GW(\mathfrak{D}^*)(\Wel_{k(\mathfrak{D}(x))}(\sC_{\mathfrak{D}(x)}))\in \GW(k(x)).
\]
3. Take $n=3d-1$ and let $\mathfrak{d}$ be a point of $\Sym^{3d-1}(\P^2)^0_\delta$.  By  Lemma~\ref{lem:DeltaProps},  the map $\mathfrak{z}$ is \'etale over a neighborhood of  $\mathfrak{d}$,  so $\mathfrak{z}^{-1}(\mathfrak{d})$ is a finite set,
\[
\mathfrak{z}^{-1}(\mathfrak{d})=\{x_1,\ldots, x_r\},\ x_i\neq x_j \text{ for }i\neq j,
\]
the field extension $k(x_i)/k(\mathfrak{d})$ is separable  and $x_i$ is in  $\sM_{0,n}^\Sigma(\P^2, d)_\delta^0$ for each $i$. Define
\[
\Wel_d(\mathfrak{d}):=\sum_{i=1}^r\Tr_{k(x_i)/k(\mathfrak{d})}(\Wel_\delta(x_i))\in \GW(\mathfrak{d}).
\]
4. Let $\eta\in Sym^{3d-1}(\P^2)^0$ be the generic point. Define $\Wel_d\in \GW(k(\Sym^{3d-1}(\P^2)^0))$ by $\Wel_d:=\Wel_d(\eta)$.
\end{definition}

\begin{remark}\label{rem:WelComp2} It follows from Remark~\ref{rem:WelComp1} that for $\mathfrak{d}$ an $\R$-point of $\Sym^{3d-1}(\P^2)^0$, $\Wel_d(\mathfrak{d})\in \GW(\R)$ has signature $(-1)^{p_a(d)}$ times Welschinger's invariant for the signed count of real degree $d$ curves in $\P^2$ containing $\mathfrak{d}$. The rank of $\Wel_d(\mathfrak{d})$ is just the number of curves (over $\C$) containing $\mathfrak{d}$.
\end{remark}

Here is the main theorem of this section.

\begin{theorem}\label{thm:WelGlobal} 1. $\Wel_d\in \GW(k(\Sym^{3d-1}(\P^2)^0))$ extends to a global section
\[
\Wel_d\in H^0(\Sym^{3d-1}(\P^2)^0, \sGW)
\]
2. For $\mathfrak{d}\in \Sym^{3d-1}(\P^2)^0$, let 
\[
\ev_\mathfrak{d}:H^0(\Sym^{3d-1}(\P^2)^0, \sGW)\to \GW(k(\mathfrak{d}))
\]
be the evaluation map. Then 
\[
ev_\mathfrak{d}(\Wel_d)=\Wel_d(\mathfrak{d})
\]
for all  $\mathfrak{d}\in \Sym^{3d-1}(\P^2)^0_\delta$.
\end{theorem}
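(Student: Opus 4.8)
The plan is to use the standard strategy for extending a rational section of the Grothendieck–Witt sheaf across a normal scheme: by purity for $\sGW$, such a section extends to a global section if and only if it has no residue along every codimension-one point, so the whole problem reduces to a finite list of codimension-one degenerations. Concretely, since $\sGW$ satisfies purity (as in \cite[\S 2]{MorelA1Top}), there is an exact Gersten-type sequence on the smooth scheme $\Sym^{3d-1}(\P^2)^0$, and I claim $\Wel_d$ lies in the kernel of the residue map at each codimension-one point $v$. For $v$ in the open $\delta$-locus $\Sym^{3d-1}(\P^2)^0_\delta$, Lemma~\ref{lem:DeltaProps}(1) says $\mathfrak{z}$ is étale over a neighborhood, $\Wel_d$ is visibly a well-defined section there (a sum of traces of the unit forms $e_{p_\ell}$), so there is nothing to check. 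Hence only the finitely many codimension-one generic points of $\Sym^{3d-1}(\P^2)_{dgn}$ matter, and by Lemma~\ref{lem:DeltaProps}(2) these are exactly $\mathfrak{z}(\eta_?)$ for $? \in \{cusp, tac, tri\} \cup \{(d_1,n_1) : 1 \le d_1 \le d/2,\ n_1 = 3d_1, 3d_1-1\}$.

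The second step is the reduction to a local, one-variable computation at each such $v$. Let $\sO := \sO_{\Sym^{3d-1}(\P^2)^0, v}$, a DVR with fraction field $K$ and residue field $\kappa = k(v)$; I must show $\Wel_d$, viewed in $\GW(K)$, lies in the image of $\GW(\sO)$, i.e. has trivial residue $\del_t(\Wel_d) = 0$. After possibly passing to the henselization (which is harmless for the residue by the commutative square in the proof of Lemma~\ref{lem:WelschExt}), I decompose $\mathfrak{z}^{-1}(\Spec\sO)$ into its connected/irreducible components mapping to $v$. The components dominating $v$ with $x$ in the $\delta$-locus over the generic point but possibly degenerate over the special point are handled by Lemma~\ref{lem:WelschExt} together with Lemma~\ref{lem:EtCrit}: by the smoothness Lemma~\ref{lem:smoothness} (whose hypotheses are precisely the two cases $C$ irreducible and $C = C_1 \cup C_2$ with $n_i \le 3d_i$, matching the index set in Lemma~\ref{lem:DeltaProps}(2)), the relevant component of $\bar{\sM}_{0,n}^\Sigma(\P^2,d)^0$ is smooth over $\Sym^n(\P^2)^0$ at the point in question, hence its singular-scheme component $\sD$ is equidimensional of relative dimension zero over $\sO$, so Lemma~\ref{lem:EtCrit} makes $\sD \to \Spec\sO$ étale, and Lemma~\ref{lem:WelschExt} then gives the extension $\Wel_\sO(\sD) \in \GW(\sO)$ whose $K$-fiber is the contribution of that component to $\Wel_d$. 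Summing over all such components (and applying the trace $\Tr_{k(x)/k(\mathfrak{d})}$, which commutes with base change and residues via the norm on $\GW$) produces an element of $\GW(\sO)$ restricting to $\Wel_d|_K$ — provided no branches of $\mathfrak{z}^{-1}(\Spec\sO)$ are "lost" in the degeneration.

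That last proviso is the crux, and it is exactly what the four technical sections of the paper (cusp, tacnode, triple point, two-component curves) are for: over the generic point $K$ the fiber $\mathfrak{z}^{-1}$ consists of finitely many honest nodal curves, but as we specialize to $v$, several of these rational nodal curves may collide into a single curve acquiring a cusp, a tacnode, a triple point, or breaking into two components, and the Welschinger invariant of the limit curve (computed with whatever extra nodes survive) need not a priori equal the limit of the sum of the Welschinger invariants of the colliding curves. The main obstacle — and the step I expect to occupy the bulk of the argument — is therefore the local analysis at the cusp: one must show that the two (or appropriately many) nodal branches of $\mathfrak{z}$ meeting at $\eta_{cusp}$ contribute a $\GW(\sO)$-class whose residue vanishes, equivalently that the "extra" node appearing in the nearby nodal curves contributes a local invariant that, summed over the branches with their trace weights, extends across $v$. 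Following Welschinger and Itenberg–Kharlamov–Shustin, this is done by an explicit deformation of the cuspidal singularity $y^2 - x^3$ — one writes the versal family, identifies the discriminant and the two nodal smoothings, computes the Hessian determinants $\det(\del^2 f / \del t_i \del t_j)$ along each branch, and checks the resulting one-dimensional forms assemble to a unit in $\GW(\sO)$ — only now keeping track of the quadratic-form refinement rather than just signs, which (as the introduction flags) requires being "a bit more careful" but introduces no genuinely new difficulty. The tacnode, triple-point, and two-component cases are analogous but easier, the last using case (ii) of Lemma~\ref{lem:smoothness} to see the two-component locus is already smooth over $\Sym^n(\P^2)^0$ in the relevant range. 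Once trivial residue is established at every codimension-one point, part (1) follows from purity; part (2) is then immediate, since for $\mathfrak{d} \in \Sym^{3d-1}(\P^2)^0_\delta$ the evaluation of the extended section agrees with the explicitly-defined $\Wel_d(\mathfrak{d})$ by construction of the extension on the étale locus.
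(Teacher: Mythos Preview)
Your proposal is correct and follows essentially the same approach as the paper: reduce via purity for $\sGW$ to the finite list of codimension-one degenerate loci from Lemma~\ref{lem:DeltaProps}(2), handle the \'etale components over each such point by Lemmas~\ref{lem:WelschExt}--\ref{lem:EtCrit}, and then verify unramifiedness at the four degenerate types by the local deformation analyses of Propositions~\ref{prop:UnramCusp}--\ref{prop:UnramRed}. One clarification worth making for the cusp case: $\mathfrak{z}$ is not ``two nodal branches meeting'' at $\eta_{cusp}$ but a \emph{single} smooth branch on which $\mathfrak{z}$ is ramified to order exactly two (Propositions~\ref{prop:CuspRam1}--\ref{prop:CuspRam2}), so the actual computation is that the extra node contributes $\langle u\cdot U\rangle$ with $U$ a unit, and the trace of this one-dimensional form from the totally ramified degree-two extension $L((u))/L((s))$ is a rank-two form lying in $\GW(L[[s]])$ --- not that separate branch contributions individually extend.
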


The proof of this result will require quite a bit a work, we begin the task with some elementary lemmas.

\begin{lemma}\label{lem:Etale1} The pullback of $\sC_{0,d}^\sing$ to $\sM_{0,n}^\Sigma(\P^2, d)^0_\delta$ is \'etale over $\sM_{0,n}^\Sigma(\P^2, d)^0_\delta$.
\end{lemma}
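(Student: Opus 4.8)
The plan is to deduce the statement directly from Lemma~\ref{lem:EtCrit}. Write $S := \sM_{0,n}^\Sigma(\P^2,d)^0_\delta$ and let $\sC_S\subset S\times\P^2$ denote the restriction to $S$ of the universal relative Cartier divisor $\sC^\Sigma$; via $\mathfrak D^0$ this is the pullback of $\sC_{0,d}$, and it is a flat family of degree $d$ plane curves over $S$. Since the relative singular locus is cut out by the relative Jacobian ideal it is compatible with base change, so the pullback of $\sC_{0,d}^\sing$ to $S$ is exactly the relative singular locus $(\sC_S)_\sing$ of $\sC_S\to S$, and it is this that we must show is \'etale over $S$. To apply Lemma~\ref{lem:EtCrit}, with $\sD := (\sC_S)_\sing$ (trivially open and closed in $(\sC_S)_\sing$), I must check four things: that $\Char k\neq 2$, which is a standing hypothesis; that $S$ is smooth over $k$; that every point of every geometric fiber of $(\sC_S)_\sing\to S$ is an ordinary double point of the corresponding fiber of $\sC_S$; and that $(\sC_S)_\sing$ is equidimensional over $S$ of relative dimension zero.

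For smoothness of $S$: it is an open substack of $\sM_{0,n}^\Sigma(\P^2,d)^0 = [\mathfrak S_n\backslash\sM_{0,n}(\P^2,d)^0]$, which is smooth over $k$ because $\sM_{0,n}(\P^2,d)$ is smooth over $k$ (\S1); and it is in fact a scheme, by the remark in \S1, since condition (i) in the definition of the $\delta$-locus forces $f$ to be unramified, hence unramified along $\supp\mathfrak d$. For the other two points the key claim is that at every geometric point $x = (f\colon C\to\P^2,\mathfrak d)$ of $S$ the morphism $f$ is birational onto its image, so that the fiber $(\sC_S)_{\bar x}$ is the reduced image curve $f(C)$: here $C\cong\P^1$, and factoring $C\to f(C)$ through the normalization as $C\xrightarrow{\tilde g}\widetilde{f(C)}\xrightarrow{\nu}f(C)$ with $\widetilde{f(C)}\cong\P^1$, condition (i) makes $f$, hence $\tilde g$, unramified --- therefore separable, therefore of degree one by Riemann--Hurwitz. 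Granting this, condition (iii) of the $\delta$-locus says precisely that $(\sC_S)_{\bar x} = f(C)_{\overline{k(x)}}$ has only ordinary double points; and $(\sC_S)_\sing\to S$, being a closed subscheme of the proper morphism $\sC_S\to S$ whose geometric fibers are the finite sets of nodes of the curves $f(C)$, is proper with zero-dimensional fibers, hence finite and in particular equidimensional over $S$ of relative dimension zero. Lemma~\ref{lem:EtCrit} then yields that $(\sC_S)_\sing\to S$ is \'etale.

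Apart from the routine base-change compatibility of the Jacobian ideal, the only genuine input is the birational-onto-image claim; all the rest is bookkeeping against the definitions of the various open substacks. I therefore expect the main obstacle to be organizational rather than computational: ensuring that $S$ really is a smooth scheme and that the fibers of $\sC_S$ really are reduced nodal curves, so that one is squarely in the situation covered by Lemma~\ref{lem:EtCrit}.
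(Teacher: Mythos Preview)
Your approach is the paper's: verify the hypotheses of Lemma~\ref{lem:EtCrit} and conclude. Smoothness of $S$, the fact that it is a scheme, the ordinary-double-point condition on the fibers, and the birationality of $f$ onto its image are all handled as in the paper (your Riemann--Hurwitz justification of birationality is in fact more explicit than what the paper writes).

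The one step where you move too fast is the deduction of ``equidimensional over $S$ of relative dimension zero'' from finiteness. Finite over $S$ certainly gives zero-dimensional fibers, but that alone does not exclude an irreducible component of $(\sC_S)_\sing$ of dimension strictly less than $\dim S$ lying over a proper closed subset of $S$; and it is exactly this that must be ruled out for the proof of Lemma~\ref{lem:EtCrit} to go through (one needs $\dim_p(\sC_S)_\sing=\dim S$ at every point $p$, so that the closed subscheme $(\sC_S)_\sing$ of the locally irreducible \'etale $S$-scheme $\sJ$ is forced to be open there). A toy obstruction: over $S=\A^1$, take $\sJ=\A^1\amalg\A^1$ mapping by the identity on each copy, and $\sD=$ the first copy together with the origin of the second; this is finite with nonempty reduced fibers but not equidimensional in the required sense. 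The paper closes this gap by invoking the genus formula: a geometrically integral nodal rational plane curve of degree $d$ has exactly $(d-1)(d-2)/2$ nodes, so the cardinality of every geometric fiber of $(\sC_S)_\sing\to S$ is the same constant, and this constancy forces every component to dominate $S$. Adding that sentence makes your argument complete and identical to the paper's.
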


\begin{proof} If $D\subset \P^2$ is an irreducible reduced curve  of degree $d$ with only ordinary double points, defined over an algebraically closed field, such that the normalization is a $\P^1$, then $D$ has exactly $(d-1)(d-2)/2$ double points. If $D=f(\P^1)$, then looking at the local deformation theory of $\sM_{0,n}^\Sigma(\P^2, d)^0$ at a point $x:=(f, \mathfrak{d})$, we see that $\sM_{0,n}^\Sigma(\P^2, d)^0$ has no automorphisms at $x$ and is smooth at $x$, thus $\sM_{0,n}^\Sigma(\P^2, d)^0_\delta$ is smooth over $k$.  

The fact that the number of points  $\pi^{-1}(x)$ is constant in the neighborhood of $x$ implies that  $\sC_{0,d}^\sing$ is equi-dimensional over $\sM_{0,n}^\Sigma(\P^2, d)^0_\delta$ of relative dimension zero. By Lemma~\ref{lem:EtCrit}, 
$\sC_{0,d}^\sing$ is \'etale over $\sM_{0,n}^\Sigma(\P^2, d)^0_\delta$.
\end{proof}

The Grothendieck-Witt sheaf $\sGW$ on a smooth irreducible scheme $Y$ is an unramified sheaf, that is, for each $y\in Y$, the stalk $\sGW_y$ is the kernel of the collection of ``tame symbol maps'' at $y$ \cite[\S 2]{MorelA1Top}. More precisely, there is an exact sequence
\[
0\to \sGW_y\to \GW(k(Y))\xr{(\del_{t_x})_x}\oplus_{x\in Y^{(1)}_y}W(k(x))
\]
Here $ Y^{(1)}_y$ is the set of codimension one points $x\in Y$ with $y\in\overline{\{x\}}$, $t_x\in \sO_{Y,x}$ is a choice of a uniformizing parameter and $\del_{t_x}:|GW(k(Y))\to W(k(x))$ is the corresponding boundary, or ``tame symbol'', map. 

For a given codimension one point $y\in Y$, and an $\alpha\in \GW(k(Y))$, we say that $\alpha$ is {\em unramified at $y$} if $\del_t(\alpha)=0$, where $t\in \sO_{Y,y}$ is a parameter. This is independent of the choice of parameter, and is satisfied if and only if $\alpha$ extends to a (unique) element $\alpha_y\in \GW(\sO_{Y,y})$.

\begin{lemma}\label{lem:Etal2} The stack $\bar{\sM}_{0,n}^\Sigma(\P^2, d)_{ord}$ is a smooth $k$-scheme and the map 
\[
\mathfrak{z}:
\bar{\sM}_{0,3d-1}^\Sigma(\P^2, d)_{ord}\setminus  \bar{\sM}_{0,3d-1}^\Sigma(\P^2, d)^0_{cusp} \to 
\Sym^{3d-1}(\P^2)^0
\]
is   \'etale.
\end{lemma}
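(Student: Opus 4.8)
The plan is to prove the two assertions separately. For smoothness of $\bar{\sM}_{0,3d-1}^\Sigma(\P^2, d)_{ord}$ as a $k$-scheme, I would go through the five open pieces making up $\bar{\sM}_{0,n}^\Sigma(\P^2, d)_{ord}$ one at a time: $\sM_{0,n}^\Sigma(\P^2, d)^0_\delta$, the cusp, tacnode, triple-point strata, and $D^{(2)}_\delta\setminus D^{(3)}$. In each case the point $x=(f:C\to\P^2,\mathfrak{d})$ has $f$ birational onto its image and $f$ unramified along the pointing (indeed on each component), so as noted in the text there are no automorphisms of $x$ and the stack is a scheme near $x$. Smoothness over $k$ then follows from the unobstructedness of deformations: the deformation space of $f$ is $H^0(C,\omega_f)$ (the argument already invoked in the proof of Lemma~\ref{lem:smoothness}), and for the singular strata one checks that the local deformation functors of an ordinary cusp $y^2-x^3$, tacnode $y^2-x^4$, triple point $xy(x-y)$, or of a nodal two-component curve, are all smooth — these are the standard versal deformation computations already alluded to in the proof of Lemma~\ref{lem:DimOrd}(3). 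Adding the free summand $\oplus_j T_{C,p_j}$ for the pointing preserves smoothness, so $\bar{\sM}_{0,n}^\Sigma(\P^2, d)_{ord}$ is smooth over $k$, and by Lemma~\ref{lem:DimOrd}(1) it is irreducible of dimension $3d-1+n = 2(3d-1)$ when $n=3d-1$.

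For étaleness of $\mathfrak{z}$ on the complement of the cusp locus, I would combine a dimension count with a smoothness-plus-unramifiedness argument. First, source and target both have dimension $2(3d-1)$ when $n=3d-1$, so it suffices to show $\mathfrak{z}$ is smooth there; being a smooth morphism of relative dimension zero is the same as being étale. On $\sM_{0,n}^\Sigma(\P^2, d)^0_\delta$ smoothness of $\mathfrak{z}$ is Lemma~\ref{lem:smoothness}(i). On the tacnode and triple-point $\delta$-strata the source curve is still irreducible, so Lemma~\ref{lem:smoothness}(i) applies verbatim (the hypotheses there only use irreducibility of $C$, the bound $n\le 3d-1$, and that $x$ lies in $\bar{\sM}_{0,n}^\Sigma(\P^2, d)^0_\delta$). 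On $D^{(2)}_\delta\setminus D^{(3)}$ the curve is $C=C_1\cup C_2$ with $\deg C_i=d_i$, $d_1+d_2=d$, and the pointing splits as $\mathfrak{d}_1+\mathfrak{d}_2$ with $n_1+n_2=3d-1$; since $n_i\le 3d_i$ can be arranged for one ordering (if both $n_i\ge 3d_i+1$ then $n_1+n_2\ge 3d+2 > 3d-1$), case (ii) of Lemma~\ref{lem:smoothness} gives smoothness. The only missing stratum is the cusp stratum, which is exactly what we have removed.

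The main obstacle — and the reason the cusp locus must be excised — is that at a cuspidal curve the first-order deformation argument of Lemma~\ref{lem:smoothness} breaks down: near a cusp $f$ is ramified, so the direct sum decomposition $f^*T_{\P^2,f(p_j)} = T_{C,p_j}\oplus f^*T_{\P^2,f(p_j)}/T_{C,p_j}$ used at the end of that proof is irrelevant at the cusp itself, but more to the point, the deformation of a cuspidal curve moving the cusp off and the motion in $\Sym^{3d-1}(\P^2)^0$ interact so that $d\mathfrak{z}$ fails to be an isomorphism — the cuspidal stratum maps with positive-dimensional fibers being ruled out, the issue is rather that $\mathfrak{z}$ acquires ramification there. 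So I would be careful to phrase the surjectivity-of-$H^1$-vanishing step only on the non-cuspidal strata, and to note explicitly that on $D^{(2)}_\delta\setminus D^{(3)}$ the splitting of $\mathfrak{d}$ is forced by $n=3d-1$ to satisfy the hypothesis of Lemma~\ref{lem:smoothness}(ii). Assembling: $\mathfrak{z}$ restricted to $\bar{\sM}_{0,3d-1}^\Sigma(\P^2, d)_{ord}\setminus \bar{\sM}_{0,3d-1}^\Sigma(\P^2, d)^0_{cusp}$ is a smooth morphism of relative dimension zero between smooth $k$-schemes, hence étale.
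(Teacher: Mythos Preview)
Your overall strategy matches the paper's: birationality of $f$ onto its image kills automorphisms, so the stack is a scheme; unobstructedness gives smoothness over $k$; and then étaleness follows from smoothness of $\mathfrak{z}$ plus the dimension count. The paper is terser, simply observing that for $x$ outside the cusp locus one has (after a finite extension) $C\cong\P^1$ with $f$ unramified, hence $N_f=\sO_{\P^1}(3d-2)$, and then arguing as in Lemma~\ref{lem:DimOrd}. One minor quibble: tacnodal and triple-point curves have image singularities that are not ordinary nodes, so they do not lie in $\bar{\sM}_{0,n}^\Sigma(\P^2,d)^0_\delta$ and Lemma~\ref{lem:smoothness} as stated does not literally apply; what you need, and what the paper invokes directly, is only that $f$ is unramified, giving $N_f\cong\sO_{\P^1}(3d-2)$, after which the $H^1$-vanishing argument of Lemma~\ref{lem:smoothness}(i) goes through unchanged.

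There is, however, a genuine gap in your treatment of $D^{(2)}_\delta\setminus D^{(3)}$. Your pigeonhole only rules out $n_1\ge 3d_1+1$ and $n_2\ge 3d_2+1$ holding simultaneously, so you conclude $n_i\le 3d_i$ for at least one $i$; but Lemma~\ref{lem:smoothness}(ii) requires this for \emph{both} $i$. Worse, this gap cannot be closed: if $n_1>3d_1$ (forcing $n_2<3d_2-1$), then $f^*\sO_{\P^2}(-3)(\sum_j p_j)$ has degree $n_1-3d_1\ge 1$ on $C_1$ and degree $\le -2$ on $C_2$, so it carries a nonzero section supported on $C_1$ and vanishing at the node, whence $H^1(C,\omega_f(-\sum_j p_j))\neq 0$ and $d\mathfrak{z}$ fails to be surjective at such a point. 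Thus the lemma as literally stated is too strong on the reducible locus; the paper's own proof in fact only writes the irreducible-source case. For the downstream applications this causes no trouble: Lemma~\ref{lem:Ext1} only uses étaleness over $\Sym^{3d-1}(\P^2)^0_\delta$, where all preimages have irreducible source, and Proposition~\ref{prop:UnramRed} works at the generic points $\eta_{d_1,n_1}$ with $n_1\in\{3d_1-1,3d_1\}$ (Lemma~\ref{lem:DeltaProps}(2)), where both inequalities $n_i\le 3d_i$ do hold.
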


\begin{proof} For $x=(f, C, \mathfrak{d})\in\bar{\sM}_{0,3d-1}^\Sigma(\P^2, d)_{ord}$, the map $f:C\to f(C)$ is birational, hence has no automorphisms, and thus $\bar{\sM}_{0,3d-1}^\Sigma(\P^2, d)_{ord}$, is a $k$-scheme. The smoothness follows from the fact that the local deformation space is unobstructed and that there are no automorphisms. 

The proof that $\mathfrak{z}$ is \'etale at  $x:= (f, C, \mathfrak{d})\in\bar{\sM}_{0,3d-1}^\Sigma(\P^2, d)_{ord}\setminus \bar{\sM}_{0,3d-1}^\Sigma(\P^2, d)^0_{cusp}$ follows from the fact that over a suitable finite extension of $k(x)$, $C\cong \P^1$,   $f:\P^1\to \P^2$ is unramified and thus $N_f=\sO_{\P^1}(3d-2)$. Just as in the proof of Lemma~\ref{lem:DimOrd}, this shows that  $\mathfrak{z}$  is smooth at $x$, and hence \'etale by a dimension count.
\end{proof}

\begin{lemma}\label{lem:Ext1}
$\Wel_d\in \GW(k(\Sym^{3d-1}(\P^2)^0)$ extends to a global section
\[
\Wel_d\in H^0(\Sym^{3d-1}(\P^2)^0_\delta, \sGW)
\]
Moreover
\[
ev_\mathfrak{d}(\Wel_d)=\Wel_d(\mathfrak{d})
\]
for all  $\mathfrak{d}\in \Sym^{3d-1}(\P^2)^0_\delta$.
\end{lemma}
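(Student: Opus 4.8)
The plan is to construct the section on the finite \'etale correspondence $M:=\mathfrak{z}^{-1}\bigl(\Sym^{3d-1}(\P^2)^0_\delta\bigr)$, where it is manifestly an unramified section of $\sGW$, and then to push it down to $Y:=\Sym^{3d-1}(\P^2)^0_\delta$ by the transfer on $\sGW$ attached to the finite \'etale map $\mathfrak{z}\colon M\to Y$. I would first assemble the geometry. The scheme $\Sym^{3d-1}(\P^2)^0$ is the quotient of the smooth scheme $(\P^2)^{3d-1}_0$ by the free $\mathfrak{S}_{3d-1}$-action, hence is smooth over $k$, and so is its open subscheme $Y$. Directly from the definition of $\Sym^{3d-1}(\P^2)^0_{dgn}$ one gets $M\subset \sM_{0,3d-1}^\Sigma(\P^2,d)^0_\delta$ as an open subscheme: a point of $M$ lies outside $\mathfrak{z}\bigl(\bar{\sM}_{0,3d-1}^\Sigma(\P^2,d)^0\setminus \sM_{0,3d-1}^\Sigma(\P^2,d)^0_\delta\bigr)$, hence lies in $\sM_{0,3d-1}^\Sigma(\P^2,d)^0_\delta$. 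By Lemma~\ref{lem:DeltaProps}(1) the morphism $\mathfrak{z}\colon M\to Y$ is \'etale, and by Lemma~\ref{lem:DimOrd}(4), after base change to the open $Y$, it is proper; hence it is finite \'etale and $M$ is smooth over $k$. Finally, letting $\sD\to M$ be the pullback of the relative singular locus $\sC_{0,d}^\sing$ along the restriction $\mathfrak{D}|_M\colon M\to P_{\delta,0,d}$, Lemma~\ref{lem:Etale1} says it is \'etale, and since $\sC_{0,d}^\sing$ is closed in the projective family $\sC_{0,d}\to \bar P_{0,d}$ it is proper over $M$; thus $\sD\to M$ is finite \'etale, each geometric fibre consisting of the $(d-1)(d-2)/2$ nodes of the corresponding rational plane curve.

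Next I would produce a distinguished unit on $\sD$. \'Etale-locally on $M$ the universal plane curve is cut out by a single homogeneous degree-$d$ equation $F$ (a local trivialization of the invertible subsheaf $\sL\subset p_{1*}p_2^*\sO_{\P^2}(d)$ built preceding \eqref{eqn:BasicDiagram}), and near a point of $\sD$ one may pick a linear form $L$ nonvanishing there and local coordinates $t_1,t_2$ on $\P^2$. At each point of $\sD$ the determinant $\det\bigl(\partial^2(F/L^d)/\partial t_i\partial t_j\bigr)$ is the one attached to an ordinary double point, hence a unit (this uses $\Char k\neq 2$); so it is a unit along $\sD$. The three computations in \S2 show that changing $L$, changing coordinates, or rescaling $F$ by a unit each alters this determinant by a square, so its class in $\sO_\sD^\times/(\sO_\sD^\times)^2$ is independent of all choices and the local classes glue to $h\in H^0\bigl(\sD,\sO_\sD^\times/(\sO_\sD^\times)^2\bigr)$. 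The multiplicative norm of the finite \'etale cover $\sD\to M$ carries squares to squares, hence descends to square classes; set $h_M:=\Nm_{\sD/M}(h)$ and $W_M:=\<h_M\>\in H^0(M,\sGW)$. By construction and Lemma~\ref{WelschFunct}, the value of $W_M$ at any point $x\in M$ equals $\bigl\<\prod_p\Nm_{k(x)(p)/k(x)}\,e_p(f_x(C_x))\bigr\>=\Wel_\delta(x)$.

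Finally I would descend along $\mathfrak{z}$. The sheaf $\sGW$ (on smooth $k$-schemes this is the unramified Milnor--Witt sheaf $\uu K^{MW}_0$) carries transfers along finite \'etale morphisms \cite[\S2]{MorelA1Top}, compatible with base change and restricting on separable residue extensions to the classical transfers $\Tr_{k(x)/k(\mathfrak{d})}$; applying this to $W_M$ gives $W:=\Tr_{M/Y}^{\sGW}(W_M)\in H^0(Y,\sGW)$. For $\mathfrak{d}\in Y$ with $\mathfrak{z}^{-1}(\mathfrak{d})=\{x_1,\dots,x_r\}$, base change along $\Spec k(\mathfrak{d})\to Y$ identifies the fibre of $\mathfrak{z}$ with $\coprod_i\Spec k(x_i)$ and yields
\[
ev_{\mathfrak{d}}(W)=\sum_{i=1}^r\Tr_{k(x_i)/k(\mathfrak{d})}\bigl(W_M|_{x_i}\bigr)=\sum_{i=1}^r\Tr_{k(x_i)/k(\mathfrak{d})}\bigl(\Wel_\delta(x_i)\bigr)=\Wel_d(\mathfrak{d}).
\]
Taking $\mathfrak{d}=\eta$ the generic point gives $W|_\eta=\Wel_d$; since $\sGW$ is unramified, $W$ is the unique section of $\sGW$ over $Y$ with this generic value, hence is the asserted extension, and the displayed identity reads $ev_{\mathfrak{d}}(\Wel_d)=\Wel_d(\mathfrak{d})$ for all $\mathfrak{d}\in Y$.

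The step I expect to be the main obstacle is the bookkeeping that globalizes the Hessian square class over $\sD$: one must check that a single defining equation exists \'etale-locally on $M$ and that the \S2 invariance computations are uniform enough to patch into a genuine section of $\sO_\sD^\times/(\sO_\sD^\times)^2$. A second point needing care, though standard, is pinning down the transfer on $\sGW$ for finite \'etale maps together with its base-change and field-trace compatibilities; the genuinely new geometric input is Lemma~\ref{lem:DeltaProps}(1), which is precisely what makes the \'etale correspondence $M\to Y$ available. (One could instead verify unramifiedness of $\Wel_d$ at each codimension-one point of $Y$ via Lemma~\ref{lem:WelschExt} applied over the discrete valuation rings $\sO_{M,x_i}$, but reassembling the traces over a DVR is less transparent than the transfer argument above.)
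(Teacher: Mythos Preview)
Your argument is correct and takes a genuinely different route from the paper's proof. The paper proceeds by the unramifiedness criterion: it shows that $\Wel_d$, defined only at the generic point, is unramified at every codimension-one point of $Y=\Sym^{3d-1}(\P^2)^0_\delta$ by passing to the \'etale cover $\tilde{Y}_x$ and invoking Lemma~\ref{lem:WelschExt} over the local DVR; the evaluation statement is then obtained by a separate and somewhat awkward chain-of-subschemes induction. You instead construct the section $W$ directly as a global object---a norm along $\sD\to M$ followed by a transfer along the finite \'etale $\mathfrak{z}\colon M\to Y$---so that both the extension and the evaluation identity at every $\mathfrak{d}\in Y$ fall out simultaneously from base-change compatibility of the transfer. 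Your approach is cleaner and more conceptual, and avoids the chain argument entirely; the paper's approach trades this for a lighter toolkit (only purity for $\sGW$, no need to cite transfer structures). Your parenthetical remark at the end accurately identifies the paper's strategy as the alternative you set aside.

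The one point you flag---globalizing the Hessian square class over $\sD$---is not a real obstacle: the invariance computations of \S2 are identities of regular functions, not merely of their values at closed points, so the square-class ambiguity cancels on overlaps just as it does fibrewise. The \'etale-local trivialization of $\sL$ you invoke is exactly what the paper sets up in the paragraph constructing $\sL_V$ before diagram~\eqref{eqn:BasicDiagram}.
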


\begin{proof} We first show that $\Wel_d$ extends to a section as claimed. Let $Y=\Sym^{3d-1}(\P^2)^0_\delta$. By the description above of $\sGW$ as an unramified sheaf on the smooth $k$-scheme $Y$, we need only show that, for each codimension one point $x\in Y$,   $\Wel_d\in  \GW(k(Y))$ lifts to an element $\Wel_d^x\in \GW(Y_x)$, where $Y_x:=\Spec\sO_{\Sym^{3d-1}(\P^2)^0, x}$. 

By Lemma~\ref{lem:Etale1}, the map $\mathfrak{z}$ is \'etale on  $\bar{\sM}_{0,n}^\Sigma(\P^2, d)_{ord}$, so it suffices to show that
$\mathfrak{D}^{0*}\Wel(C_\eta)\in k(\bar{\sM}_{0,n}^\Sigma(\P^2, d))$ extends to section of $\sGW$ over $\tilde{Y}_x:=\bar{\sM}_{0,n}^\Sigma(\P^2, d)\times_YY_x$. Let $\sC_{\tilde{Y}_x}\to \tilde{Y}_x$ be the pull-back of the family of plane curves  $\sC_{0,d}\to \bar{\sM}_{0,n}^\Sigma(\P^2, d)^0$ to $\tilde{Y}_x$. By Lemma~\ref{lem:Etale1}, $\sC_{\tilde{Y}_x}\to \tilde{Y}_x$ satisfies the hypotheses of Lemma~\ref{lem:WelschExt}, which gives us the desired extension.

The statement on the evaluation map also follows from Lemma~\ref{lem:Etale1} and Lemma~\ref{lem:Etal2} by taking a chain of smooth closed subschemes of 
$\Spec \sO_{Sym^{3d-1}(\P^2)^0_\delta, \mathfrak{d}}$,
\[
\mathfrak{d}=D_0\subset D_1\subset\ldots\subset D_{r-1}\subset D_r=\Spec \sO_{Sym^{3d-1}(\P^2)^0_\delta, \mathfrak{d}},
\]
with $D_i\subset D_{i+1}$ of codimension one for each $i$. Letting $z_i$ be the generic points of $D_i$, applying  Lemma~\ref{lem:Etale1} and Lemma~\ref{lem:Etal2} shows that $\Wel_d(z_i)\in \GW(k(z_i))$ extends to an element $\Wel_d(\sO_{D_i, z_{i-1}})\in \GW(sO_{D_i, z_{i-1}})$ which maps to $\Wel_d(z_{i-1})\in \GW(k(z_{i-1}))$ under the canonical homomorphism $\GW(sO_{D_i, z_{i-1}})\to \GW(k(z_{i-1})$. By induction, $\Wel_d(z_i)=\ev_{z_i}(\Wel_d)$ and then the fact that $\sGW$ is an unramified sheaf implies that $\GW(\sO_{D_i, z_{i-1}})\subset \GW(k(z_i))$ and thus $\Wel_d(\sO_{D_i, z_{i-1}})$ is the restriction of $\Wel_d$ to $\Spec \sO_{D_i, z_{i-1}}$, whence follows $\ev_{k(z_{i-1})}(\Wel_d)=\Wel_d(z_{i-1})$.
\end{proof}

Let $\eta_{cusp}$ be   the generic point of 
$\bar{\sM}_{0,3d-1}^\Sigma(\P^2, d)_{cusp}$  and define $\eta_{tac}, \eta_{tri}$ similarly as the respective  generic points   of 
$\bar{\sM}_{0,3d-1}^\Sigma(\P^2, d)_{tac}$, $\bar{\sM}_{0,3d-1}^\Sigma(\P^2, d)_{tri}$.

There is one component of $D^{(2)}\setminus D^{(3)}$ for each way of writing $d=d_1+d_2$, $0\le d_1\le d_2\le d$ and each way of writing $3d-1=n_1+n_2$, $n_i\ge0$; we let $\eta_{d_1, n_1}$ be the corresponding generic point.

We postpone the main task of proving Theorem~\ref{thm:WelGlobal} to a series of subsequent sections; assuming these results, we give the proof here.

\begin{proof}[Proof of \hbox{Theorem~\ref{thm:WelGlobal}}] By Lemma~\ref{lem:Ext1}, $\Wel_d$ extends to a section 
\[
\Wel_d\in H^0(\Sym^{3d-1}(\P^2)^0_\delta, \sGW) 
\]
with $\ev_{\mathfrak{d}}\Wel_d=\Wel_d(\mathfrak{d})$ for all $\mathfrak{d}\in \Sym^{3d-1}(\P^2)^0_\delta$. Thus, we need only show that  $\Wel_d\in H^0(\Sym^{3d-1}(\P^2)^0_\delta, \sGW)$ extends to a section of $\sGW$ over all of  $\Sym^{3d-1}(\P^2)^0$. As $\sGW$ is definition an unramified sheaf, we need only check that $\Wel_d$ is unramified at  the codimension one points of $\Sym^{3d-1}(\P^2)^0$ that lie outside of  $\Sym^{3d-1}(\P^2)^0_\delta$.

 For  $?\in  \{(d_1, n_1)\mid 1\le d_1\le d/2, n_1=3d_1, 3d_1-1\}$ or $?\in \{cusp, tac, tri \}$, let $\bar{\eta}_{?}=\mathfrak{z}(\eta_{?})$. By Lemma~\ref{lem:DeltaProps}(2)  
\begin{multline*}
(\Sym^{3d-1}(\P^2)^0)^{(1)}\setminus \Sym^{3d-1}(\P^2)^0_\delta\\=\{ \bar{\eta}_{?}\mid
?\in  \{cusp, tac, tri \}\cup \{(d_1, n_1)\mid 1\le d_1\le d/2, n_1=3d_1, 3d_1-1\}\},
\end{multline*}
that is,  the codimension one points of $\Sym^{3d-1}(\P^2)^0$ that lie outside of 
$\Sym^{3d-1}(\P^2)^0_\delta$ are exactly  those points $\bar{\eta}_{?}$.

By Propositions~\ref{prop:UnramCusp}, \ref{prop:UnramTac}, \ref{prop:UnramTri}, and \ref{prop:UnramRed}, $\Wel_d$ is unramified at all points $\bar{\eta}_{?}$, which completes the proof.
\end{proof}

\begin{corollary}\label{cor:WelshInv} Let $K$ be an extension field of $k$ and let $\mathfrak{d}_1, \mathfrak{d}_2$ be in $\Sym^{3d-1}(\P^2)^0(K)$. If $\mathfrak{d}_1$ and $\mathfrak{d}_2$ lie in the same $\A^1$-connected component of $\Sym^{3d-1}(\P^2)^0_K$, then $\Wel_d(\mathfrak{d}_1)=\Wel_d(\mathfrak{d}_2)$. In particular, if 
$\mathfrak{d}_1, \mathfrak{d}_2$ are in $\Sym^{3d-1}(\P^2)^0_\delta(K)$ and lie in the same $\A^1$-connected component of $\Sym^{3d-1}(\P^2)^0_K$, then the corresponding ``quadratic count'' $\Wel_d(\mathfrak{d}_1)$ of degree $d$ rational curves with ordinary double points and containing $\supp(\mathfrak{d}_1)$ is the same as that for $\supp(\mathfrak{d}_2)$, $\Wel_d(\mathfrak{d}_2)$ .
\end{corollary}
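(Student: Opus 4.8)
The plan is to deduce the corollary formally from Theorem~\ref{thm:WelGlobal}(1), which realizes $\Wel_d$ as a global section of the Nisnevich sheaf $\sGW$ on $X:=\Sym^{3d-1}(\P^2)^0$, together with the strict $\A^1$-invariance of $\sGW$ \cite{MorelA1Top}. First I would record that $X$ is smooth over $k$: it is the quotient of the open complement of the diagonals in $(\P^2)^{3d-1}$ by the free $\mathfrak{S}_{3d-1}$-action, hence smooth, and the same holds for $X_K$. Base-changing the section of Theorem~\ref{thm:WelGlobal}(1) along $k\to K$ yields $\Wel_{d,K}\in H^0(X_K,\sGW)$, and for every $\mathfrak{d}\in X(K)$ the evaluation $\ev_\mathfrak{d}(\Wel_{d,K})\in\GW(K)$ is exactly $\Wel_d(\mathfrak{d})$ (for $\mathfrak{d}\in\Sym^{3d-1}(\P^2)^0_\delta(K)$ this is moreover the quadratically weighted count of curves, by part (2)).

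The heart of the argument is the elementary case. Suppose $\mathfrak{d}_0,\mathfrak{d}_1\in X(K)$ are joined by a single $\A^1$, i.e.\ there is a $K$-morphism $h:\A^1_K\to X_K$ with $h\circ i_0=\mathfrak{d}_0$ and $h\circ i_1=\mathfrak{d}_1$, where $i_0,i_1:\Spec K\to\A^1_K$ are the points $0$ and $1$. Then $h^*\Wel_{d,K}\in H^0(\A^1_K,\sGW)$, and since $\sGW$ is strictly $\A^1$-invariant the projection $p:\A^1_K\to\Spec K$ induces an isomorphism $p^*:\GW(K)\xr{\sim}H^0(\A^1_K,\sGW)$. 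As $i_0^*p^*=i_1^*p^*=\id$, the restrictions $i_0^*,i_1^*:H^0(\A^1_K,\sGW)\to\GW(K)$ coincide, whence
\[
\Wel_d(\mathfrak{d}_0)=\ev_{\mathfrak{d}_0}(\Wel_{d,K})=i_0^*h^*\Wel_{d,K}=i_1^*h^*\Wel_{d,K}=\ev_{\mathfrak{d}_1}(\Wel_{d,K})=\Wel_d(\mathfrak{d}_1).
\]
Iterating along a chain of such elementary $\A^1$-links --- which, by definition, connects any two points of $X(K)$ lying in the same $\A^1$-connected component of $X_K$ --- gives $\Wel_d(\mathfrak{d}_1)=\Wel_d(\mathfrak{d}_2)$ for $\mathfrak{d}_1,\mathfrak{d}_2$ in a common component, and the second assertion is then the special case $\mathfrak{d}_1,\mathfrak{d}_2\in\Sym^{3d-1}(\P^2)^0_\delta(K)$ via Theorem~\ref{thm:WelGlobal}(2).

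Alternatively, and more conceptually, since $\sGW$ is a discrete strictly $\A^1$-invariant sheaf it is $\A^1$-local, so the morphism of sheaves $\Wel_d:X\to\sGW$ factors uniquely through $\pi_0^{\A^1}(X)$; the corollary is then immediate, as $\mathfrak{d}_1$ and $\mathfrak{d}_2$ have the same image in $\pi_0^{\A^1}(X)(K)$. I expect no serious obstacle here: all the real work is in Theorem~\ref{thm:WelGlobal}, whose proof occupies the bulk of the paper. The only points needing care are purely formal --- the compatibility of the construction of $\Wel_d$ and of $\sGW$ with the base change $k\to K$, and the precise meaning of ``$\A^1$-connected component'' for field-valued points, where one either adopts the $\A^1$-chain-connected description used in the iteration above or invokes the factorization through $\pi_0^{\A^1}(X)$, both legitimate in view of Morel's strict $\A^1$-invariance of $\sGW$.
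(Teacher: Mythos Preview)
Your approach is essentially the same as the paper's: invoke Theorem~\ref{thm:WelGlobal}(1) to obtain a global section, then use Morel's strict $\A^1$-invariance of $\sGW$ to conclude that the section is constant on $\A^1$-connected components. The paper's proof is terser but uses the same idea, phrasing it directly via the factorization through $\pi_0^{\A^1}(Y)$.

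One caution: in your iteration argument you write that a chain of elementary $\A^1$-links ``by definition, connects any two points of $X(K)$ lying in the same $\A^1$-connected component of $X_K$.'' This is not the paper's definition, and is in general false. The paper defines $\A^1$-connectedness via the Nisnevich sheaf $\pi_0^{\A^1}(X)$ and the class map $cl_K:X(K)\to\pi_0^{\A^1}(X)_K$; as noted explicitly in the remark following the corollary, $\A^1$-chain-connected components are only \emph{subsets} of $\A^1$-connected components. So your chain argument, taken alone, proves a weaker statement than the corollary claims. Your alternative conceptual argument --- that $\sGW$ is $\A^1$-local, hence $\Wel_d:X\to\sGW$ factors through $\pi_0^{\A^1}(X)$ --- is the correct one and is exactly what the paper does. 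You should lead with that and drop the suggestion that the two notions coincide.
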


\begin{proof} This follows from the $\A^1$-invariance of the Grothendieck-Witt sheaf (see \cite[Theorem 2.37]{MorelA1Top}): 
\[
H^0(X,\sGW)\cong H^0(X\times \A^1, \sGW), 
\]
which implies that, for $Y$ a smooth $k$-scheme and $\alpha\in H^0(Y_K, \sGW)$, then $\alpha$ is constant on $\A^1$-connected components of $Y(K)$. In detail, there is a Nisnevich sheaf $\pi_0^{\A^1}(Y)$ on $\Sm/k$ and a natural map
\[
cl_K:Y(K)\to \pi_0^{\A^1}(Y)_K
\]
where $\pi_0^{\A^1}(Y)_K$ denotes the stalk of $\pi_0^{\A^1}(Y)$ at the field $K$. Two $K$-points $x, y$ are said to lie in the same $\A^1$-connected component of $Y$ if $cl_K(x)=cl_K(y)$. 
\end{proof}

\begin{remark} Suppose we have two reduced divisors $\mathfrak{d}_1=\sum_{i=1}^s p_i$,  
$\mathfrak{d}_2=\sum_{j=1}^t q_j$, $p_i,  q_j$ closed points in $\P^2_K$, in the same $\A^1$-connected component of  $\Sym^{3d-1}(\P^2)^0(K)$ for some field $K$. In characteristic zero, the fact that $\A^1$ is simply connected implies that the ``field extension type'' of $\mathfrak{d}_1$ and $\mathfrak{d}_2$ agree, that is we have $s=t$ and after a reordering, we have $K(p_i)=K(q_i)$ for $i=1,\ldots, s$. In positive characteristic, $\A^1$ is not simply connected, and this invariance of the field invariance type will not in general hold.

We ask if, in characteristic zero, the $\A^1$-connected components of $\Sym^{3d-1}(\P^2)^0(K)$ are simply given by the field extension type. If so, this would give a direct generalization of Welschinger's theorem to arbitrary fields.
\end{remark}

\begin{ex}\label{ex:Deg123} We mention that for a smooth $k$-scheme and $K$ and extension field of $K$, a  {\em $\A^1$-chain connected component} of $Y(K)$ is a subset of an $\A^1$-connected component of $Y(K)$. Here $x, y\in  Y(K)$ lie in the same $\A^1$-chain connected component of $Y(K)$ if there are maps $f_i:\A^1_K\to Y_K$, $i=1,\ldots, n$ with $f_1(0)=x$, $f_n(1)=y$ and $f_i(1)=f_{i+1}(0)$ for $i=1,\ldots, n-1$. 

For example, it is easy to see that, if $\mathfrak{d}_1=\sum_{i=1}^rx_i+\mathfrak{d}_0$ and $\mathfrak{d}_2=\sum_{i=1}^ry_i+\mathfrak{d}_0$ are two reduced 0-cycles of degree $n$ on $\P^2_K$ with $x_i, y_j\in \P^2(K)$, then $\mathfrak{d}_1$ and $\mathfrak{d}_2$ lie in the same $\A^1$-chain connected component of $\Sym^n(\P^2)^0(K)$, assuming for instance that $K$ is an infinite field: just move each $x_i$ to a general $K$-point $z_i$ by  moving along a line $\ell_i\subset \P^2$, and then move $y_i$ to $z_i$ along a second line $\ell'_i$. Since $K$ is infinite, we may find a $z_i$ such that the resulting family lies in $\Sym^n(\P^2)^0$.

A similar argument works for points in a fixed degree two separable extension $L$ of $K$. Take  $x, y\in \P^2(L)$ with conjugates $x^\sigma$, $y^\sigma$, giving us closed points $p_x, p_y\in \P^2_K$ with $K(p_x)\cong K(p_y)\cong L$.  We may first move say $p_x$ into a general position with respect to $p_y$ and any other 0-cycle $\mathfrak{d_0}$, say by acting by a unipotent subgroup $U\cong \A^1$ in $\PGL_3$.  By taking $p_x$ suitably general with respect to $p_y$, we may assume that no three of the four $L$-points $x, y, x^\sigma, y^\sigma$ line on a line. 

Let $\ell_{x,y}$ be the line through $x, y$ with conjugate $\ell_{x,y}^\sigma=\ell_{x^\sigma, y^\sigma}$; by our general position assumption, we have  $\ell_{x^\sigma, y^\sigma}\neq \ell_{x, y}$. Then 
$\ell_{x,y}\cap \ell_{x,y}^\sigma$ is a $K$-point $z_0$ . Now consider the lines $\ell_{x,x^\sigma}$ and  $\ell_{y,y^\sigma}$ through $x, x^\sigma$, $y, y^\sigma$, which intersect in a $K$-point $z_1\neq z_0$. The projection from $z_1$ gives a map $\pi:\P^2_K\setminus\{z_1\}\to \P^1_K$, where we identify the target of the projection with $\P^1_K$ via $\pi(x)=\pi(x^\sigma)=0$, $\pi(y)=\pi(y^\sigma)=1$ and $\pi(z_0)=\infty$. This identifies $\ell_{x,y}\cup \ell_{x^\sigma, y^\sigma}\setminus \{z_0\}$ with $\A^1_L$ with $0$ mapping to $p_x$ and $1$ mapping to $p_y$, i.e., $p_x$ and $p_y$ are $\A^1$-chain connected. If we move $p_x$ into suitably general position as indicated above, the resulting $\A^1$-chain will exhibit $p_x+\mathfrak{d}_0$ and 
 $p_y+\mathfrak{d}_0$ as being in the same $\A^1$-connected component of $\Sym^n(\P^2)^0(K)$. 
 
 Thus, for $K$ of characteristic zero,  the $\A^1$ connected component of $\Sym^{3d-1}(\P^2)^0(K)$ of a 0-cycle $\mathfrak{d}_0$ with field extension type a collection of copies of $K$ or degree two extensions of $K$ is exactly the set of 0-cycles $\mathfrak{d}\in \Sym^{3d-1}(\P^2)^0(K)$ with the same field extension type as $\mathfrak{d}_0$. For $K$ of positive characteristic ($>2$) the $\A^1$ connected component containing $\mathfrak{d}_0$ contains at least all such $\mathfrak{d}$. 
  
We can push this one degree higher: suppose we have $p_x, p_y$ closed points of $\P^2_K$ with $K(p_x)=L=K(p_y)$ for $L$ a degree {\em three} extension of $K$. We want to show that, for with  $x+\mathfrak{d}_0, y+\mathfrak{d}_0\in
\Sym^{3d-1}(\P^2)^0(K)$, we have $\Wel_d(x+\mathfrak{d}_0)=\Wel_d(y+\mathfrak{d}_0)$. For this, $\GW(K)\to \GW(L)$ is injective since $L$ has odd degree over $K$, so it suffices to see that $x+\mathfrak{d}_0, y+\mathfrak{d}_0$ lie in the same $\A^1$-connected component of $\Sym^{3d-1}(\P^2)^0(L)$. We have $L\otimes_KL=L\times L'$ with $L'$ either a degree two extension of $K$ or $L'\cong L\times L$. But then $p_{xL}=x+p_x'$, $p_{yL}=y+p_y'$ with either $p_x'$ and $p_y'$ the sum of two $L$-points or $L(p_x')=L'=L(p_y')$ is a degree two extension of $L$, and the methods outlined above puts  $p_x'$ and $p_y'$ in the same $\A^1$-connected component of  $\Sym^{3d-1}(\P^2)^0(L)$.
\end{ex}

\section{Curves with a cusp}

We emphasize that we retain our assumption that $\Char k\neq 2,3$. 

Let $f:\P^1\to \P^2$ be a morphism with $f(\P^1)\subset \P^2$ a degree $d$ reduced curve and $f:\P^1\to f(\P^1)$ birational; we work over an algebraically closed base-field $k$. We suppose that $f(0)=(0:0:1)$ is an ordinary cusp on $f(\P^1)$ and that $f$ is unramified outside of $0$. We define the sheaf $\sN_f$ on $\P^1$ by exactness of the sequence
\begin{equation}\label{eqn:SheafSeq}
0\to T_{\P^1}\xr{df} f^*T_{\P^1}\to \sN_f\to 0
\end{equation}
Let $\sN_{f\text{tors}}\subset \sN_f$ be the torsion subsheaf and $\sN_f^0:=\sN_f/\sN_{f\text{tors}}$ the locally free quotient sheaf.  In suitable local analytic coordinates $t$ at $0\in \P^1$ and $(x,y)$ at $(0:0:1)$ in $\P^2$, we have $f(t)=(t^2, t^3)$ and so the map $df$ near 0 is given by the matrix $(2t, 3t^2)$.  We have the map $df:T_{\P^1}\to f^*T_{\P^2}$. Letting $\tilde{T}_{\P^1}\subset f^*T_{\P^2}$ be the invertible sheaf generated by the image of $df$, we have the short exact sheaf sequences
\[
0\to T_{\P^1}\xrightarrow{df}\tilde{T}_{\P^1}\to \sN_{f\text{tors}}\to 0
\]
and
\[
0\to \tilde{T}_{\P^1}\to f^*T_{\P^2}\to \sN_f^0\to 0
\]
We identify  $\sN_{f\text{tors}}$ with the ``limiting tangent line'' $L\subset \A^2_k$ defined by $y=0$; using the parameter $dx$ on $L$ gives the isomorphisms  $\sN_{f\text{tors}}\cong k(0)$ and  $\tilde{T}_{\P^1}\cong T_{\P^1}(0)\cong \sO_{\P^1}(3)$. As $\det f^*T_{\P^2}\cong \sO_{\P^1}(3d)$, we have $\sN_f^0\cong \sO_{\P^1}(3d-3)$.

\begin{proposition}\label{prop:CuspRam1} Let $C$ be a reduced irreducible genus zero curve, and let $f:C\to \P^2$ be a morphism, defined over a field $F$, with $f:C\to f(C)$ birational and $f(C)$ of degree $d$. Suppose that $f(C)$ has a cusp $f(p)$ for some $p\in C(F)$, and that $f$ is unramified on $C\setminus\{p\}$.  Then for every degree $3d-1$ reduced divisor $\mathfrak{d}$ on $C$, the map $\mathfrak{z}$ is ramified at $(C, f,\mathfrak{d})$. 

Moreover, let $(\Sym^n\P^2)^0_{cusp}\subset (\Sym^n\P^2)^0$ be the closure of image of  $\bar{\sM}_{0,n}^\Sigma(\P^2, d)_{cusp}$ und $\mathfrak{z}$. Then the restriction of $\mathfrak{z}$ to
\[
\mathfrak{z}_{cusp}:\bar{\sM}_{0,n}^\Sigma(\P^2, d)_{cusp}\to (\Sym^n\P^2)^0_{cusp}
\]
is unramified at $(C, f,\mathfrak{d})$.
\end{proposition}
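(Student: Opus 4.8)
Throughout write $n=3d-1$. One may check (non)ramification of $\mathfrak z$ after the base change $F\to\bar F$, since $x=(C,f,\mathfrak d)$ is an $F$-rational point; so assume $F=\bar F$, $C=\P^1$, $p:=0$, $f(0)=(0:0:1)$ the cusp, $f$ unramified on $\P^1\setminus\{p\}$, and $\mathfrak d=\sum_{j=1}^{n}p_j$ with the $p_j$ distinct. As $f$ is birational onto its image, $x$ has no automorphisms, and its deformation theory being unobstructed (cf.\ the proof of Lemma~\ref{lem:DimOrd}), both $\bar{\sM}_{0,n}^\Sigma(\P^2,d)$ and $\bar{\sM}_{0,n}^\Sigma(\P^2,d)_{cusp}$ are schemes near $x$, with $\bar{\sM}_{0,n}^\Sigma(\P^2,d)$ smooth at $x$ of dimension $2n=\dim\Sym^n(\P^2)$; so ``$\mathfrak z$ ramified at $x$'' just means ``$d\mathfrak z_x$ is not surjective''. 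Pulling back along the \'etale cover $(\P^2)^n_0\to(\Sym^n\P^2)^0$ as in the proof of Lemma~\ref{lem:smoothness}, we may replace $\bar{\sM}_{0,n}^\Sigma(\P^2,d)$ by $\bar{\sM}_{0,n}(\P^2,d)$, $\mathfrak d$ by $(p_1,\dots,p_n)$, and $\mathfrak z$ by $(f,p_\bullet)\mapsto(f(p_1),\dots,f(p_n))$.

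For the first assertion, use the exact sequence of tangent spaces $0\to\bigoplus_{j}T_{\P^1,p_j}\to T_{\bar{\sM}_{0,n}(\P^2,d),x}\to H^0(\P^1,\sN_f)\to0$ obtained by forgetting the marked points (so $H^0(\P^1,\sN_f)$ is the deformation space of the unpointed stable map $f$). Under it $d\mathfrak z_x$ carries $(w_j)_j$ to $(df_{p_j}(w_j))_j$ and induces on the quotient the fibrewise restriction $H^0(\P^1,\sN_f)\to\bigoplus_j\sN_f\otimes k(p_j)$; as the cokernel of $(w_j)_j\mapsto(df_{p_j}(w_j))_j$ is exactly $\bigoplus_j\sN_f\otimes k(p_j)$, surjectivity of $d\mathfrak z_x$ amounts to surjectivity of this restriction map. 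Now $\sN_f=\sN_{f\text{tors}}\oplus\sN_f^0\cong k(p)\oplus\sO_{\P^1}(3d-3)$, so $H^0(\P^1,\sN_f)$ has dimension $3d-1=n$; if $p\notin\supp\mathfrak d$ the summand $H^0(\sN_{f\text{tors}})$ maps to $0$, so the restriction map factors through the $(3d-2)$-dimensional $H^0(\sO_{\P^1}(3d-3))$ and cannot hit the $n$-dimensional target, while if $p\in\supp\mathfrak d$ the target has dimension $n+1>n$. Hence $d\mathfrak z_x$ is not surjective and $\mathfrak z$ is ramified at $x$. A little further bookkeeping shows that $\ker d\mathfrak z_x$ is $1$-dimensional; when $p\notin\supp\mathfrak d$ it is spanned by the first-order deformation $v_0$ of $f$ attached to a generator of the torsion submodule $\sN_{f\text{tors}}\subset\sN_f$ — that is, the deformation of $f$ ``in the direction of the limiting tangent line $L$'' of the cusp.

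For the second assertion it now suffices, by the first, to show that $v_0$ is transverse to the cuspidal locus, $v_0\notin T_{\bar{\sM}_{0,n}^\Sigma(\P^2,d)_{cusp},x}$ (here $\mathfrak d$ is taken with $p\notin\supp\mathfrak d$, which in particular holds at the generic point of $\bar{\sM}_{0,n}^\Sigma(\P^2,d)_{cusp}$, the case needed for Theorem~\ref{thm:WelGlobal}): then $d\mathfrak z_x$ is injective on $T_{\bar{\sM}_{0,n}^\Sigma(\P^2,d)_{cusp},x}$, so $d(\mathfrak z_{cusp})_x$ is injective and — all points being $\bar F$-rational and $\Char F\ne2,3$ — $\mathfrak z_{cusp}$ is unramified at $x$. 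To prove the transversality, work in local coordinates with $f(t)=(t^2,t^3)$: then $v_0$ is represented by $f_\epsilon(t)=(t^2+2\epsilon,\,t^3+3\epsilon t)$, whose image is the nodal curve $y^2=(x+\epsilon)^2(x-2\epsilon)$; in the versal deformation $y^2=x^3+\alpha x+\beta$ of the plane cusp this reads $(\alpha,\beta)=(-3\epsilon^2,-2\epsilon^3)$, a normalization of the equigeneric discriminant $\{4\alpha^3+27\beta^2=0\}$. Since $\bar{\sM}_{0,n}^\Sigma(\P^2,d)$ is smooth at $x$ and $\mathfrak D$ sends $f$ to its image curve (hence parametrizes the image by its normalization), in a formal (or \'etale) neighbourhood of $x$ there is a coordinate $s$ on $\bar{\sM}_{0,n}^\Sigma(\P^2,d)$ with $\bar{\sM}_{0,n}^\Sigma(\P^2,d)_{cusp}=\{s=0\}$, with $\mathfrak D$ given on the two $(\alpha,\beta)$-directions by $s\mapsto(-3s^2,-2s^3)$, and with $ds(v_0)\ne0$; hence $v_0\notin T_{\bar{\sM}_{0,n}^\Sigma(\P^2,d)_{cusp},x}$, as required.

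The heart of the argument is this transversality, which rests on the explicit deformation theory of the cusp $y^2=x^3$ — that its equigeneric deformations form a smooth curve mapping to the versal base by a cuspidal parametrization, matched with the local structure of $\mathfrak D:\bar{\sM}_{0,n}^\Sigma(\P^2,d)\to\bar{P}_{0,d}$ at $x$. As in the treatments of Welschinger and of Itenberg–Kharlamov–Shustin this is ``the most subtle case'', and it is precisely here that the hypothesis $\Char\ne2,3$ is used.
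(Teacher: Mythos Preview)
Your argument for the first assertion is correct and essentially matches the paper's: you phrase it as a dimension count on $H^0(\sN_f)\to\bigoplus_j\sN_f\otimes k(p_j)$, while the paper packages the same obstruction as $H^1(\P^1,\sN_f(-\sum p_j))\cong k\neq0$. These are equivalent.

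For the second assertion your route diverges from the paper's. You first isolate $\ker d\mathfrak z_x=\langle v_0\rangle$ and then prove $v_0\notin T_{\bar\sM_{cusp},x}$ by an explicit versal--deformation computation: the family $f_\epsilon(t)=(t^2+2\epsilon,\,t^3+3\epsilon t)$ has nodal image $y^2=(x+\epsilon)^2(x-2\epsilon)$, tracing out the normalization $s\mapsto(-3s^2,-2s^3)$ of the discriminant in the $(\alpha,\beta)$--plane, whence the cuspidal locus is locally $\{s=0\}$ and $ds(v_0)\neq0$. This is correct (the lift of $\bar\sM\to\{4\alpha^3+27\beta^2=0\}$ through the normalization exists because $\bar\sM$ is smooth, and your computation shows the lift has nonzero differential in the $v_0$--direction), and it has the pleasant side effect of previewing the order--two ramification of Proposition~\ref{prop:CuspRam2}. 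The paper, by contrast, avoids any explicit coordinates: it identifies the tangent space to the cuspidal locus directly as $H^0(C,\sN_f^0)\oplus\bigoplus_jT_{\P^1,p_j}$ (equisingular first--order deformations are exactly those coming from the torsion--free quotient $\sN_f^0$), and then observes that $H^0(\sN_f^0(-\sum_jp_j))=H^0(\sO_{\P^1}(-2))=0$ forces $H^0(\sN_f^0)\hookrightarrow\bigoplus_j f^*T_{\P^2,f(p_j)}/T_{\P^1,p_j}$, giving injectivity of $d\mathfrak z_{cusp}$ in one line. In effect, the transversality you prove computationally is, in the paper's language, the tautology that $v_0\in H^0(\sN_{f\mathrm{tors}})$ does not lie in the complementary summand $H^0(\sN_f^0)$. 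Your approach is sound but does more work than needed here; the paper's identification $T_{\bar\sM_{cusp},x}=H^0(\sN_f^0)\oplus\bigoplus_jT_{\P^1,p_j}$ is the efficient shortcut.

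One minor remark: for the second assertion you restrict to $p\notin\supp\mathfrak d$ and note this suffices at the generic point of the cuspidal locus; the paper's proof is likewise written under this tacit assumption (inherited from the first paragraph), so you are not losing anything relative to the paper.
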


\begin{proof} To compute the ramification, we may pull back by the \'etale cover $(\P^2)^{3d-1}_0\to \Sym^{3d-1}(\P^2)^0$ and consider the map $\tilde{\mathfrak{z}}:\sM_{0, 3d-1}(\P^2, d)\to (\P^2)^{3d-1}_0$ at $x:=(C, f, $ $(p_1\ldots, p_{3d-1}))$. Furthermore, we may extend the residue field $k(x)$ so that $C\cong \P^1$ and the cusp on $f(\P^1)$ is $f(0)=(0:0:1)$. Since the ramification locus is closed, we may assume that $p_i\neq0$ for all $i=1,\ldots, 3d-1$, so that $f$ is unramified at each $p_i$.

We have the exact sequence
\[
0\to \sN_f(-\sum_{i=1}^{3d-1}p_i)\to \sN_f\to \oplus_{i=1}^{3d-1}f^*T_{\P^2, f(p_i)}/T_{\P^1, p_i}\to 0
\]
By our computation of $\sN_f$ given above, we have
\[
\sN_f(-\sum_{i=1}^{3d-1}p_i)\cong i_{0*}k(0)\oplus \sO_{\P^1}(-2)
\]
and thus $H^1(\P^1_{k(x)}, \sN_f(-\sum_{i=1}^{3d-1}p_i))\cong k(x)$. Similarly $H^1(\P^1, \sN_f)=0$ and thus the map
\[
\res: H^0(\P^1, \sN_f)\to \oplus_{i=1}^{3d-1}f^*T_{\P^2, f(p_i)}/T_{\P^1, p_i}
\]
is not surjective. Identifying $f^*T_{\P^2, f(p_i)}$ with $f^*T_{\P^2, f(p_i)}/T_{\P^1, p_i}\oplus T_{\P^1, p_i}$,   we may identify $d\tilde{\mathfrak{z}}_{x}$ with the map
\[
\res\oplus \oplus_{i=1}^{3d-1}\id:H^0(\P^1, \sN_f)\oplus \oplus_{i=1}^{3d-1}T_{\P^1, p_i}\to \oplus_{i=1}^{3d-1}f^*T_{\P^2, f(p_i)}
\]
and we see thereby that $\tilde{\mathfrak{z}}$ is ramified at $x$.

To examine 
\[
\mathfrak{z}_{cusp}:\bar{\sM}_{0,n}^\Sigma(\P^2, d)_{cusp}\to (\Sym^n\P^2)^0_{cusp}
\]
(after pulling back to $(\P^2)^{3d-1}_0$ as above)
we note that 
\[
T_{(C,f,(p_1,\ldots, p_{3d-1})}=H^0(C, \sN_f^0)\oplus \oplus_{i=1}^{3d-1}T_{\P^1, p_i}
\]
 and as 
$\dim H^1(C, \sN_f^0(-\sum_i p))=1$, the map $H^0(C, \sN_f^0)\to 
\oplus_{i=1}^{3d-1}f^*T_{\P^2, f(p_i)}/T_{\P^1, p_i}$ is injective. This implies that 
$d\mathfrak{z}_{cusp}$ is injective at $(C,f,(p_1,\ldots, p_{3d-1})$.
\end{proof}

\begin{proposition}\label{prop:CuspRam2}  The map $\mathfrak{z}:\bar\sM_{0,3d-1}^\Sigma(\P^2, d)^0\to \Sym^{3d-1}(\P^2)^0$ has ramification order two along $\bar{\sM}_{0,3d-1}^\Sigma(\P^2, d)_{cusp}$.
\end{proposition}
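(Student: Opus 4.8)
The plan is to reduce the statement to the non-vanishing of a single second-order term at the cusp and then to compute that term via an explicit local model. First I would pass to the étale-local situation over $\bar k$: fix a general point $x_0=(f_0\colon\P^1\to\P^2,\mathfrak d)$ of $\bar\sM_{0,3d-1}^\Sigma(\P^2,d)_{cusp}$ and pull back along $(\P^2)^{3d-1}_0\to\Sym^{3d-1}(\P^2)^0$, so as to study $\tilde{\mathfrak z}\colon\sM:=\sM_{0,3d-1}(\P^2,d)\to(\P^2)^{3d-1}_0$ near $x_0$; here $\sM$ is smooth of dimension $6d-2$ at $x_0$ and $\sM_{cusp}$ is a smooth divisor through $x_0$. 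By Proposition~\ref{prop:CuspRam1}, $d\tilde{\mathfrak z}_{x_0}$ has kernel and cokernel each of dimension $1$, and $\tilde{\mathfrak z}|_{\sM_{cusp}}$ is unramified, hence (after shrinking, and formally-locally) a closed immersion onto a smooth divisor $Z$ that restricts to a formal isomorphism $\sM_{cusp}\xrightarrow{\sim}Z$ near $x_0$. Choosing formal local coordinates adapted to $\sM_{cusp}=\{u=0\}$ and $Z=\{w=0\}$, the map takes the form $(u,s)\mapsto\big(s_i+u\,h_i(u,s)\ \text{in the }s_i\text{-slots},\ u\,g(u,s)\ \text{in the }w\text{-slot}\big)$; ramification at $x_0$ forces $g(x_0)=0$, so along a formal arc tangent to $\ker d\tilde{\mathfrak z}_{x_0}$ the function $w$ pulls back to $u^2\cdot(\text{function})$, and the ramification index of $\tilde{\mathfrak z}$ along $\sM_{cusp}$ is $2$ exactly when the second-order term of $\tilde{\mathfrak z}$ in the kernel direction has nonzero component in the one-dimensional cokernel — i.e. when $\tilde{\mathfrak z}$ is a fold at $x_0$.

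Next I would identify this obstruction intrinsically. By the computation of $\sN_{f_0}$ in the present section and the proof of Proposition~\ref{prop:CuspRam1}, $\ker d\tilde{\mathfrak z}_{x_0}$ is spanned by $(\nu_0,0)\in H^0(\P^1,\sN_{f_0})\oplus\bigoplus_i T_{\P^1,p_i}$, where $\nu_0$ spans $H^0(\P^1,\sN_{f_0}(-\textstyle\sum_i p_i))\cong k$, namely the torsion section supported at the cusp-preimage $p_0$. For any formal arc $\gamma(u)$ in $\sM$ with $\gamma(0)=x_0$ and $\gamma'(0)=(\nu_0,0)$, the arc $\tilde{\mathfrak z}\circ\gamma$ has vanishing first derivative at $0$, and the class $\operatorname{ob}(\nu_0)$ of $\tfrac{d^2}{du^2}\big|_0(\tilde{\mathfrak z}\circ\gamma)$ in $\operatorname{coker}(d\tilde{\mathfrak z}_{x_0})\cong H^1(\P^1,\sN_{f_0}(-\sum_i p_i))\cong k$ is independent of $\gamma$ (two such arcs differ at order two by a tangent vector of $\sM$, whose $d\tilde{\mathfrak z}$-image dies in the cokernel). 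By the previous paragraph, Proposition~\ref{prop:CuspRam2} is equivalent to $\operatorname{ob}(\nu_0)\neq 0$.

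Finally — and this is the delicate point — I would prove $\operatorname{ob}(\nu_0)\neq 0$ by an explicit local computation at the cusp. Using the splitting $\sN_{f_0}(-\sum_i p_i)\cong i_{p_0*}k(p_0)\oplus\sO_{\P^1}(-2)$ recalled in the present section, one has $H^1\cong H^1(\sO_{\P^1}(-2))\cong k$ with a concrete \v{C}ech representative on the standard cover of $\P^1$. Realize $\nu_0$ by a one-parameter family of degree-$d$ morphisms $g_u\colon\P^1\to\P^2$ which, in the analytic coordinates near $p_0$ in which $f_0(t)=(t^2,t^3)$, takes the versal form $g_u(t)=(t^2,\,t^3+ut)$ — so that the image curve near $p_0$ is $\{y^2=x(x+u)^2\}$, a node for $u\neq 0$ — and agrees with $f_0$ outside a neighbourhood of $p_0$ modulo a $u$-dependent correction; keeping the marked points $p_1,\dots,p_{3d-1}$ fixed gives an arc $\gamma(u)=(g_u,\mathfrak d)$ with $\gamma'(0)=(\nu_0,0)$. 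Computing the $u^2$-term of $g_u(p_i)$ and pairing with the \v{C}ech representative, the non-vanishing of $\operatorname{ob}(\nu_0)$ comes down to the fact that the genus-preserving (discriminant) locus in the miniversal deformation $\{y^2=x^3+ax+b\}$ of the plane cusp is the cuspidal curve $\Gamma=\{4a^3+27b^2=0\}$, whose normalization $\A^1\to\Gamma$, $u\mapsto(\mathrm{const}\cdot u^2,\mathrm{const}\cdot u^3)$, is ramified of order exactly $2$ at the origin: the parameter $u$ genuinely moves the image curve already to first order, while — because $\nu_0$ is torsion supported at $p_0$ — it moves $g_u(p_i)$ only to second order, and that second-order displacement is transverse to $Z$.

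The main obstacle is this last step: the residue computation showing that the second-order displacement does not accidentally lie in $\operatorname{im}(d\tilde{\mathfrak z}_{x_0})$. This is precisely the cusp case that Welschinger and Itenberg--Kharlamov--Shustin single out as the subtle one; I expect to carry it out exactly as indicated, via the $(t^2,t^3+ut)$ model and careful bookkeeping of the \v{C}ech cocycle for $H^1(\P^1,\sN_{f_0}(-\sum_i p_i))$, the remaining ingredients (the reduction to a fold statement, well-definedness of $\operatorname{ob}(\nu_0)$, and the equivalence ``fold $\Leftrightarrow$ ramification index $2$'') being formal.
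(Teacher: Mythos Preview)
Your overall architecture is the same as the paper's: pass to the cover, identify $\ker d\tilde{\mathfrak z}_{x_0}\cong H^0(\sN_{f_0}(-\sum p_i))\cong k\cdot\nu_0$ and $\operatorname{coker}\cong H^1(\sN_{f_0}(-\sum p_i))\cong k$, and show the intrinsic second fundamental form $\Sym^2(\ker)\to\operatorname{coker}$ is nonzero by an explicit arc plus a residue/\v{C}ech computation. That part is fine.

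The gap is in your choice of arc. The family $g_u(t)=(t^2,\,t^3+ut)$ does \emph{not} realize the torsion section $\nu_0$. Its first-order term is $(0,t)\in H^0(f_0^*T_{\P^2})$, whose image in $\sN_{f_0}^0$ (in the paper's local trivialization $(a,b)\mapsto 2b-3at$) is $2t\neq 0$; equivalently, at each marked point $p_i=t_i$ one has $(0,t_i)\notin df(T_{\P^1,p_i})$, so with the $p_i$ kept fixed $d\tilde{\mathfrak z}(\gamma'(0))\neq 0$. Another way to see this: your image curve $y^2=x(x+u)^2$ moves to first order in $u$, whereas the torsion direction is exactly the one that leaves the image curve fixed to first order (it is a ``meromorphic reparametrization''). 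So your $\gamma'(0)\neq(\nu_0,0)$, and the second-order term you would extract is not $\operatorname{ob}(\nu_0)$.

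The paper's fix is to lift $\nu_0$ by $g_a(t)=(a/t)\,df(t)$, i.e.\ $f_u=f+u\cdot g_a$, and simultaneously move the markings by $p_{i,u}=t_i-ua/t_i$ so that $d\tilde{\mathfrak z}(\gamma'(0))=0$; setting $h_u(t)=f_u(t-(a/t)u)$, one represents $d^2\tilde{\mathfrak z}(\gamma)$ by the \v{C}ech $1$-cocycle $\pi(d^2h)$ for $\sN_{f_0}^0(-\sum p_i)$, pairs with a generator $\omega$ via Serre duality, and uses $\sum_p\Res_p=0$ to reduce to a single residue at $t=0$, which comes out to $6v(0)a^2\neq 0$. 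Your discriminant remark (that the equigeneric locus $4a^3+27b^2=0$ is a cuspidal curve whose normalization is ramified to order $2$) is true, but it concerns the map from $\sM$ to the miniversal base of the image singularity, not the map $\tilde{\mathfrak z}$ to $(\P^2)^{3d-1}$; it does not by itself give $\operatorname{ob}(\nu_0)\neq 0$.
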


\begin{proof}   As in the proof of Proposition~\ref{prop:CuspRam1}, we may replace $\mathfrak{z}:\sM_{0, 3d-1}^\Sigma(\P^2, d)^0\to \Sym^{3d-1}(\P^2)^0$ with $\tilde{\mathfrak{z}}:\sM_{0, 3d-1}(\P^2, d)^0\to (\P^2)^{3d-1}_0$. We consider a point $x:=(\P^1, f, (p_1\ldots, p_{3d-1}))$ of $\sM_{0, 3d-1}(\P^2, d)^0$ such that 
$f(0)$ is an ordinary cusp, $f:\P^1\setminus\{0\}\to \P^2$ is unramified, $f(\P^1)$ has only ordinary double points as singularities and such that $q_i:=f(p_i))$ is  supported in the smooth locus of $f(\P^1)$ for all $i$. We may extend the base field at will, so we may assume that the points $p_i$ are all $k$-points of $\P^1$.

For a morphism $\phi:Y\to X$ of smooth varieties over $k$ and a point $y\in Y$, we have the differential
\[
d\phi_y:T_{Y,y}\to \phi^*T_{X,x}
\]
Letting $\bar{\phi}^*T_{X,x}$ denote the cokernel of $d\phi_y$, we have the 2nd order differential
\[
d^2\phi:\Sym^2_{k(y)}T_{Y,y}\to \bar{\phi}^*T_{X,x}
\]
Indeed, the map $\phi^*:\sO_{X,x}\to \phi_*\sO_{y,y}$ induces the map on the jet spaces $\sJ^2\phi^*:\sJ^2\sO_{X,x}\to \phi_*\sJ^2\sO_{y,y}$ and thus $\sJ^2\phi^*$ induces map of the kernel of $d\phi:\Omega^1_{X,x}\to f_*\Omega^1_{Y,y}$ to the subspace $\phi_*\Sym^2\Omega^1_{Y,y}\subset  \phi_*\sJ^2\sO_{y,y}$; the map $d^2\phi$ is the dual of this map.

In our case, the sheaf sequence
\[
0\to \sN_f(-\sum_ip_i)\to \sN_f\to \oplus f^*T_{\P^2, q_i}/T_{\P^1, p_i}\to\0
\]
identifies $\bar{\mathfrak{z}}^*(T_{(\P^2)^{3d-1}, q_*})$ with $H^1(\P^1, \sN_f(-\sum_ip_i))$, and $\ker d\mathfrak{z}$ with the torsion subsheaf $i_{0*}k(0)\cong \sN_{f\text{tors}}$. We are therefore interested in computing the map
\[
d^2\mathfrak{z}_x:k(0)^{\otimes_{k(0)}2}\to H^1(\P^1, \sN_f(-\sum_ip_i))
\]
and showing that this map is non-zero. 

We may take $x$ to be any smooth point of $\bar{\sM}_{0,3d-1}^\Sigma(\P^2, d)_{cusp}$; to make the computation as simple as possible, we take $f$ of the form 
\[
f(t):=(1:f_1(t), f_2(t))=(t^2+\ldots, t^3+\ldots);\quad t:=t_1/t_0.
\]
and take $p_i=(1:t_i)$, $i=1,\ldots, 3d-1$ for $t_1,\ldots, t_{3d-1}$ general points in $k$.

We have already computed 
\[
H^1(\P^1, \sN_f(-\sum_ip_i))\cong H^1(\P^1, \sN_f(-\sum_ip_i)/\text{tors})\cong
H^1(\P^1, \sO(-2))\cong k.
\]
We detect the map $d^2\mathfrak{z}_x$ by composing with the Serre duality pairing
\[
H^1(\P^1, \sN_f(-\sum_ip_i))\times H^1(\P^1, K_{\P^1}\otimes \sN_f(-\sum_ip_i)^\vee)\to k.
\]

Take $a\in k^\times$. We define the 1-parameter deformation of $f$, $f_u:\P^1_{k[[u]]}\to \P^2_{k[[u]]}$ by
$f_u:=f+u\cdot g_a$, where
\[
g_a(t):=((a/t)f_1'(t), (a/t)f_2'(t))=(2a+\ldots, 3at+\ldots)
\]
Noting that $g_a(t)dt=(1/t)df(t)$  for all $t\neq0$, we see that the section of $\sN_f$ corresponding to the first-order deformation defined by $f_u$ goes to zero as a section of $\sN_f^0$. Similarly, $g_a(0)=(a, 0)$, so the section of $\sN_{f\text{tor}}\cong k(0)$ defined by $f_u$ is $2a$. Similarly, if we extend $f_u$ to a curve $x^1_u$ in $\bar\sM_{0,3d-1}^\Sigma(\P^2, d)^0$ by fixing the points $p_1,\ldots, p_{3d-1}$, we see that 
\[
d\mathfrak{z}(x^1_u)=(\ldots, (a/t(p_i))df(p_i)\ldots)
\]
Letting $p_{iu}\in \P^1_{k[[u]]}$ be the point  $p_{iu}:=(1:t_i-ua/t_i)$, we have 
$d\mathfrak{z}(x_u)=0$ and
\[
d^2\mathfrak{z}(x_u)=\del (\ldots, \frac{d^2}{du^2}(f_1(p_{iu}), f_2(p_{iu}))_{u=0}
,\ldots),
\]
where 
\[
\del:\oplus_{i=1}^{3d-1}f^*T_{\P^2, q_i}/T_{\P^1, p_i}\to H^1(\P^1, \sN_f(-\sum_{i=1}^{3d-1}p_i))
\]
is the boundary map in the cohomology sequence associated to \eqref{eqn:SheafSeq}. 

Letting $h_u(t,u)=f_u(t-(a/t)u)=(h_1(t, u), h_2(t,u))$, we have
\[
\frac{d^2}{du^2}(f_1(p_{iu}), f_2(p_{iu}))_{u=0}=\frac{d^2h_u}{du^2}(t_i, 0).
\]
Let $\A^2\subset \P^2$ be the affine open $X_0\neq0$ with standard coordinates $x=X_1/X_0$, $y=X_2/X_0$. Using the standard basis $(\del/\del x, \del/\del y)$ for $T_{\A^2}$,  we have the section $d^2h:=(d^2h_1/du^2(t,0)\cdot \del/\del x, 
d^2h_2/du^2(t,0)\cdot \del/\del y)$ of $f^*T_{\P^2}$ over $\P^1\setminus\{0,\infty\}$. Letting $\pi:f^T_{\P^2}\to \sN_f^0$ be the projection, one checks that $\pi(d^2h)$ extends to a section of $\sN_f^0$ over $\P^1\setminus\{0\}$.

We have the cover $\sU$ of $\P^1$ given by $U_0=\P^1\setminus\{0\}$, $U_1=\P^1\setminus\{p_1,\ldots, p_{3d-1}\}$. Since 
\[
 \frac{d^2}{du^2}h(t_i,0)= \frac{d^2}{du^2}(f_1(p_{iu}), f_2(p_{iu}))_{u=0}, 
 \]
 we can represent $d^2\mathfrak{z}(x_u)$ as the 1-cocycle in $C^1(\sU, \sN_f^0(\sum_i(-p_i)))$ given by $\pi(d^2h)\in H^0(U_0\cap U_1, \sN_f^0(\sum_i(-p_i))$. Thus for $\omega\in H^0(\P^1, \omega_{\P^1/k}\otimes(\sN_f^0)^\vee(\sum_ip_i))$, the pairing  $\<\omega, d^2\mathfrak{z}(x_u)\>$ is given by 
\[
 \sum_{i=1}^{3d-1}\Res_{p_i}\pi(d^2h)\cdot \omega
\]
where $\pi(d^2h)\cdot \omega$ is to be considered as a section of $\omega_{\P^1}$ over $U_0\cap U_1$ via the pairing 
\[
\omega_{\P^1/k}\otimes(\sN_f^0)^\vee(\sum_ip_i)\otimes 
\sN_f^0)(-\sum_ip_i)\to \omega_{\P^1/k}.
\]
We note that, as $\omega_{\P^1/k}\otimes(\sN_f^0)^\vee(\sum_ip_i)\cong \sO_{\P^1}$, the section $\omega_{\P^1/k}$ is non-zero at 0.

Since $\P^1\setminus U_0\cap U_1=\{0, p_1,\ldots, p_{3d-1}\}$, we can also compute $\<\omega, d^2\mathfrak{z}(x_u)\>$  as
\[
\<\omega, d^2\mathfrak{z}(x_u)\>=-\Res_0\pi(d^2h)\cdot \omega,
\]
since $\sum_{p\in \P^1}\Res_p\tau=0$ for each rational 1-form $\tau$ on $\P^1$. To make the computation of $\Res_0\pi(d^2h)\cdot \omega$, we use a trivialization of $\sN_f^0$ in a neighborhood of 0 given as follows: we have $\tilde{T}_{\P^1}=T_{\P^2}(0)$ and use as generator for $\tilde{T}_{\P^1}$ over $\P^1\setminus\{\infty\}$ the section $1/t\del/\del t$. Since 
\[
df(1/t\del/\del t)=((1/t)f_1'(t)\del/\del x, (1/t)f_2'(t)\del/\del y)=(2+\ldots, 3t+\ldots),
\]
sending a section $(a\del/\del x, b\del/\del y)$ of $f^*T_{\P^2}$ to $b\cdot(1/t)f_1'(t)-a\cdot (1/t)f_2'(t)$ descends to give an isomorphism of $\sN_f^0(-\sum_ip_i)$ with $\sO_{\P^1}$ over $\P^1\setminus\{p_1,\ldots, p_{3d-1}, \infty\}$. Via this isomorphism, $\omega$ will transform to a 1-form $v(t)dt$, with $v\in \sO_{\P^1,0}^\times$. 

A direct computation with respect to this local trivialization gives
\[
\pi(d^2h)\cdot\omega=(-\frac{6a^2}{t}+a_0+a_1t+\ldots)v(t)dt
\]
which yields
\[
-\Res_0\pi(d^2h)\cdot\omega=6v(0)\cdot a^2.
\]
Thus, the quadratic form $d^2\mathfrak{z}$ is, up to a non-zero scalar factor, the form $a\mapsto a^2$, and hence the ramification order is two.
\end{proof}

\begin{proposition} \label{prop:UnramCusp} Let  $\bar{\eta}_{cusp}=\mathfrak{z}(\eta_{cusp})$ . Then $\Wel_d\in H^0(\Sym^{3d-1}(\P^2)^0_\delta, \sGW)$ is unramified at $\bar{\eta}_{cusp}$.
\end{proposition}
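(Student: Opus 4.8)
Since $\sGW$ is an unramified sheaf, it suffices to show $\del_t(\Wel_d)=0$ in $W(k(\bar\eta_{cusp}))$ for a uniformizer $t$ of $R:=\sO_{\Sym^{3d-1}(\P^2)^0,\bar\eta_{cusp}}$. As in the proof of Lemma~\ref{lem:WelschExt}, the tame symbol is compatible with the henselization $R\to R^h$, so writing $F$ for the fraction field of $R^h$ and $\kappa:=k(\bar\eta_{cusp})$ for its residue field, it is enough to prove that the image of $\Wel_d$ in $\GW(F)$ lies in $\GW(R^h)$.

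The first step is to describe $\mathfrak z^{-1}$ near $\bar\eta_{cusp}$. By Lemma~\ref{lem:DeltaProps}(2) the only degenerate stable map over the generic point $\bar\eta_{cusp}$ of its divisor is the cuspidal one, so $\mathfrak z^{-1}(\bar\eta_{cusp})$ is contained in the smooth scheme $\bar{\sM}_{0,3d-1}^\Sigma(\P^2,d)_{ord}$ (Lemma~\ref{lem:Etal2}), and $\mathfrak z$ is finite near $\bar\eta_{cusp}$ (it is proper, and quasi-finite there by Lemma~\ref{lem:DimOrd}(4) and Proposition~\ref{prop:CuspRam1}). Hence $\bar{\sM}_{0,3d-1}^\Sigma(\P^2,d)_{ord}\times_{\Sym^{3d-1}(\P^2)^0}\Spec R^h$ is a finite $R^h$-scheme that is regular of dimension one, hence --- $R^h$ being henselian --- a disjoint union $\coprod_{i\ge0}\Spec\mathcal{A}_i$ of spectra of henselian discrete valuation rings, where $\mathcal{A}_0$ is the henselian local ring of $\bar{\sM}_{0,3d-1}^\Sigma(\P^2,d)_{ord}$ at $\eta_{cusp}$, of ramification index $2$ over $R^h$ by Proposition~\ref{prop:CuspRam2} and with residue field $\kappa_M:=k(\eta_{cusp})$, while each $\mathcal{A}_j$ for $j\ge1$ is finite \'etale over $R^h$ by Lemma~\ref{lem:Etal2}. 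Writing $L_i$ for the fraction field of $\mathcal{A}_i$ and $\eta_M$ for the generic point of $\bar{\sM}_{0,3d-1}^\Sigma(\P^2,d)$, so that $\Wel_d=\Tr_{k(\eta_M)/k(\eta)}(\Wel_\delta(\eta_M))$ over the generic point $\eta$ of $\Sym^{3d-1}(\P^2)^0$ (using irreducibility, Lemma~\ref{lem:DimOrd}(1)), compatibility of the transfer on $\sGW$ with the pro-\'etale base change to $F$ gives, in $\GW(F)$,
\[
\Wel_d=\Tr_{L_0/F}\big(\Wel_\delta(\eta_M)|_{L_0}\big)+\sum_{j\ge1}\Tr_{L_j/F}\big(\Wel_\delta(\eta_M)|_{L_j}\big).
\]

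For $j\ge1$ the universal curve over $\Spec\mathcal{A}_j$ remains nodal, so Lemma~\ref{lem:EtCrit} and Lemma~\ref{lem:WelschExt} show that $\Wel_\delta(\eta_M)|_{L_j}$ extends over $\mathcal{A}_j$, and hence its transfer extends over $R^h$. So everything reduces to the cusp term $\Tr_{L_0/F}(\Wel_\delta(\eta_M)|_{L_0})$. Over $\Spec\mathcal{A}_0$ the family of plane curves has cuspidal special fibre and nodal generic fibre with $p_a(d)$ nodes; by the computation in the proof of Proposition~\ref{prop:CuspRam2}, exactly one of these nodes --- the one specializing to the cusp --- is $L_0$-rational and has local Hessian determinant $\sigma\cdot(\text{unit of }\mathcal{A}_0)$ for $\sigma$ a uniformizer of $\mathcal{A}_0$, while the other $p_a(d)-1$ nodes persist and contribute units of $\mathcal{A}_0$. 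Hence $\Wel_\delta(\eta_M)|_{L_0}=\<\sigma w\>$ for some $w\in\mathcal{A}_0^\times$.

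The final step is to show that the transfer of such a one-dimensional form along the tamely ramified quadratic extension $L_0/M$ is hyperbolic, where $M$ is the fraction field of the maximal subextension $\mathcal{B}\subseteq\mathcal{A}_0$ that is \'etale over $R^h$; since $\Tr_{M/F}$ carries hyperbolic forms to hyperbolic forms, this yields $\Tr_{L_0/F}(\<\sigma w\>)\in\GW(R^h)$ and finishes the argument. Writing $\mathcal{A}_0=\mathcal{B}[\sigma]$ with $\sigma^2=ct$, $c\in\mathcal{B}^\times$, and $w=B_0+B_1\sigma$ with $B_0\in\mathcal{B}^\times$, $B_1\in\mathcal{B}$, Frobenius reciprocity reduces the claim to showing that $\Tr_{L_0/M}(\<\sigma(1+\beta\sigma)\>)$ is hyperbolic for $\beta\in\mathcal{B}$; computing the Scharlau transfer in the $M$-basis $\{1,\sigma\}$ one obtains a nondegenerate rank-two form whose discriminant is the square class of $-(1-\beta^2ct)$, and $1-\beta^2ct$ is a unit of $\mathcal{B}$ congruent to $1$, hence a square since $\Char\kappa\neq2$, so the form is hyperbolic. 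This is the algebraic incarnation of Welschinger's cusp wall-crossing: the two nodal curves colliding at the cuspidal one differ only in the type of the vanishing node, contributing $\<1\>+\<-1\>=\H$ to the count, and the hyperbolicity above is exactly what renders that contribution boundary-free. The main obstacle is the local analysis at the vanishing node: one must extract from Proposition~\ref{prop:CuspRam2} that its Hessian determinant vanishes to order exactly one in $\sigma$ (an even order would make the term unramified for trivial reasons), because it is this odd order together with ramification index two that forces the transfer to be hyperbolic rather than merely unramified.
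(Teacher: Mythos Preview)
Your overall strategy coincides with the paper's: pass to the henselization, split the finite fibre over $\bar\eta_{cusp}$ into the \'etale (nodal) pieces and the ramified cuspidal piece, handle the former via Lemma~\ref{lem:WelschExt}, and treat the latter by a trace computation exploiting the ramification index two from Proposition~\ref{prop:CuspRam2}. The paper carries out that last step by writing down the Gram matrix of $\Tr_{L((u))/L((s))}(\langle uU\rangle)$ in an explicit basis and checking its determinant is a unit in $L[[s]]$; your argument instead computes the discriminant, observes that $1-\beta^2ct$ is a square in the henselian ring $\mathcal B$, and concludes that the rank-two transfer is actually hyperbolic. That is a genuinely sharper and more conceptual endpoint than the paper's, and your wall-crossing gloss is exactly right: the two nodal curves merging at the cusp contribute $\langle 1\rangle+\langle -1\rangle$.

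One citation needs fixing. The assertion that the vanishing node has Hessian determinant $\sigma\cdot(\text{unit})$ does \emph{not} follow from the proof of Proposition~\ref{prop:CuspRam2}: that proposition computes the second differential $d^2\mathfrak z$ of the map $\mathfrak z$ and shows it is the nondegenerate quadratic form $a\mapsto a^2$ up to a unit, which establishes the ramification index but says nothing directly about the Hessian of the image plane curve at the node. You need a separate local computation of the versal deformation of the cusp, e.g.\ taking $f_u(t)=(a(t^2-\tfrac{2}{3}u),\,b(t^3-ut))$, locating the node of $f_u(\P^1)$ at $(au/3,0)$, and reading off the local equation $y^2-(\text{unit})\cdot u\,x'^2+(\text{higher})=0$, whence the Hessian determinant is $u$ times a unit. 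This is precisely what the paper does at the start of its own proof of the proposition. With that insertion your sketch is complete; the remaining omissions (separability of $\kappa_M/\kappa$ via Proposition~\ref{prop:CuspRam1}, containment of the fibre in $\bar\sM_{ord}$ via Lemma~\ref{lem:DimOrd}(4)) are routine.
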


\begin{proof} Let $L=k(\eta_{cusp})$ and let $K=k(\bar{\eta}_{cusp})$. Then $\eta_{cusp}$ is a smooth codimension one point on $\bar{\sM}_{0,3d-1}^\Sigma(\P^2, d)$ and $\bar{\eta}_{cusp}$ is a smooth codimension one point on $\Sym^{3d-1}(\P^2)$. Let $s$ be a uniformizing parameter at $\bar{\eta}_{cusp}$.

As $f_{\eta_{cusp}}$ has a single point of ramification, it follows that this point is an $L$-rational point of $C_{\eta_{cusp}}$, so we may assume $C_{\eta_{cusp}}=\P^1_L$, that the ramification point is $0=(1:0)$ and that $f_{\eta_{cusp}}(0)=(1:0:0)$. In suitable local analytic coordinates, we may write
\[
f_{\eta_{cusp}}(t)=(at^2, bt^3)
\]
for $a, b\in L^\times$. The local deformation theory of $f_{\eta_{cusp}}$ tells us that there is a local parameter $u$ at $\eta_{cusp}$ so that in an analytic neighborhood of $\eta_{cusp}$, the general map $f_u:\P^1\to \P^2$ is given by
\[
f_u(t)=(a(t^2-\frac{2}{3}u), b(t^3-ut)).
\]
For $u\neq0$, $f_u(\P^1)$ has the ordinary double point at $(au/3, 0)$, given by the fact that
\[
f_u(\sqrt{u})=(au/3, 0)=f_u(-\sqrt{u}).
\]

The local equation for $f_u(\P^1)$ at $(au/3, 0)$ is $0=y^2-ux^2+\ldots$ and so the contribution of the double point $(au/3, 0)$ to $\Wel(f_u(\P^1))$ is $u$. Since $f(\P^1)$ has the single cusp $(0,0)$ and all other singularities ordinary double points, the subscheme of the ``other'' singular points of $f_u$ forms a finite \'etale  cover of $\Spec L[[u]]$. It follows from a slight modification of Lemma~\ref{lem:WelschExt} that there is a unit $U\in L[[u]]^\times$ such that 
$\Wel(f_u(\P^1))=u\cdot U\in L((u))^\times$. Indeed, $U$ is just the norm of all the local invariants of the singularities of $f_u(\P^1)$ away from $(au/3, 0)$.

The map $\mathfrak{z}$ induces the finite flat map of rings
\[
\mathfrak{z}^*:K[[s]]\to L[[u]].
\]
and by a base-extension the finite flat ring homomorphism
\[
\mathfrak{z}_L^*:L[[s]]\to L[[u]].
\]
Since the map $\mathfrak{z}$ is ramified to order two along  $\eta_{cusp}$, $\mathfrak{z}_L^*(s)=w\cdot u^2$ for some unit $w\in L[[u]]^\times$ and thus $u$ satisfies a quadratic equation of the form
\[
u^2+P(s)su-Q(s)s=0
\]
for $P(s), Q(s)\in L[[s]]$ with $Q(0)\neq0$.  This gives us the element $\alpha:=u+s\cdot P(s)/2$ with $\alpha^2=v\cdot s$, $v$ a unit in $L[[s]]$.  Writing $U=U_1+V_1\cdot \alpha$, $U_1\in L[[s]]^\times$, $V_1\in L[[s]]$, we have
\begin{align*}
\Tr_{L((u))/L((s))}(u\cdot U)&=s(2V_1v -U_1P)\\
\Tr_{L((u))/L((s))}(u\cdot U\cdot\alpha)&=sv(2U_1-sV_1P)\\
\Tr_{L((u))/L((s))}(u\cdot U\cdot\alpha^2)&=s^2v(2V_1v-U_1P)
\end{align*}

We compute the   trace form $\Tr_{L((u))/L((s))}(\<u\cdot U\>)$ using the basis $1, \alpha$ for $L((u))$ over $L((s))$:
\begin{multline*}
\Tr_{L((u))/L((s))}(\<u\cdot U\>)=\begin{pmatrix}\Tr_{L((u))/L((s))}(u\cdot U)&
\Tr_{L((u))/L((s))}(\alpha\cdot u\cdot U)\\
\Tr_{L((u))/L((s))}(\alpha\cdot u\cdot U)&\Tr_{L((u))/L((s))}(\alpha^2\cdot u\cdot U)
\end{pmatrix}\\=
\begin{pmatrix} s(2V_1v -(U_1P))&sv(2U_1-sV_1P)\\
sv(2U_1-sV_1P)&s^2v(2V_1v-U_1P)
\end{pmatrix}
\end{multline*}
This is equivalent to 
\[
\begin{pmatrix} s(2V_1v -(U_1P))&v(2U_1-sV_1P)\\
v(2U_1-sV_1P)&v(2V_1v-U_1P)
\end{pmatrix},
\]
with determinant   $s(2V_1v -(U_1P))v(2V_1v-U_1P)-v^2(2U_1-sV_1P)^2$, a 
unit in $L[[s]]$. Thus $\Tr_{L((u))/L((s))}(\<u\cdot U\>)$ extends to a section of $\sGW$ over $\Spec L[[s]]$.

We now consider the entire pullback
\[
\bar{\sM}_{0,3d-1}^\Sigma(\P^2, d)^{\wedge_s}:=\bar{\sM}_{0,3d-1}^\Sigma(\P^2, d)\times_{\Sym^{3d-1}(\P^2)}\Spec K[[s]]\to \Spec K[[s]]
\]
Let $f^{-1}(\bar{\eta}_{cusp})_\red=\{x_1,\ldots, x_N\}$, where we take $x_1,\ldots, x_r$ to be points corresponding to maps $f$ such that $f(C)$ has a (single) cusp and $x_{r+1},\ldots, x_N$ the points corresponding to maps $f$ such that $f(C)$ has only ordinary double points. This breaks up $\bar{\sM}_{0,3d-1}^\Sigma(\P^2, d)^{\wedge_s}$ as a disjoint union
\[
\bar{\sM}_{0,3d-1}^\Sigma(\P^2, d)^{\wedge_s}=\amalg_{j=1}^N\Spec L_j[[u_j]]
\]

Let $\prod_{j=1}^NL_j[[s]]$ be the maximal unramified subextension in $K[[s]]\to \prod_jL_j[[u_j]]$.  For $j=r+1,\ldots, N$, the extension $K[[s]]\to L_j[[u_j]]$ is \'etale. In this  case, the scheme of double points $f_{u_j}(C_{L_j[[u_j]]})_\sing$ is \'etale over $\Spec L_j[[u_j]]$. It follows from  Lemma~\ref{lem:WelschExt}  that $\Wel_{L_j((u_j))}(f_{u_j}((C_{L_j((u_j))}))$ extends to a section of $\sGW$ over $\Spec L_j[[u_j]]$, and thus $\Tr_{L_j((u_j))/K((s))}\Wel_{L_j((u_j))}(f_{u_j}((C_{L_j((u_j))}))$ extends to a section of $\sGW$ over $\Spec K[[s]]$. For $j=1,\ldots, r$, we have just seen above that 
$\Tr_{L_j((u_j))/L_j((s))}\Wel_{L_j((u_j))}(f_{u_j}((C_{L_j((u_j))}))$ extends to a section of $\sGW$ over $\Spec L_j[[s]]$; since  the field extension $K\to L_j$ is separable by Proposition~\ref{prop:CuspRam1}, it follows that 
\begin{multline*}
\Tr_{L_j((u_j))/K(s))}\Wel_{L_j((u_j))}(f_{u_j}((C_{L_j((u_j))}))\\=\Tr_{L_j((s))/K((s))}(\Tr_{L_j((u_j))/L_j((s))}\Wel_{L_j((u_j))}(f_{u_j}((C_{L_j((u_j))}))))
\end{multline*}
extends to a section of $\sGW$ over $\Spec K[[s]]$. As 
\[
\Tr_{\prod_jL_j((u_j))/K((s))}=\sum_j\Tr_{L_j((u_j))/K((s))}, 
\]
this shows that $\Wel_d$  is unramified at $\bar{\eta}_{cusp}$.
\end{proof}

\section{Curves with a tacnode}

\begin{proposition} \label{prop:UnramTac} Let  $\bar{\eta}_{tac}=\mathfrak{z}(\eta_{tac})$ . Then $\Wel_d\in H^0(\Sym^{3d-1}(\P^2)^0_\delta, \sGW)$ is unramified at $\bar{\eta}_{tac}$.
\end{proposition}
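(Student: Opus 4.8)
The plan is to follow the template of the proof of Proposition~\ref{prop:UnramCusp}, exploiting the fact that, unlike at a cusp, the normalization map is an immersion at a tacnode, so $\mathfrak{z}$ is unramified (not merely generically finite) along the tacnode locus and there is no ramification computation to carry out. Set $K=k(\bar{\eta}_{tac})$ and let $\sO$ be the DVR $\sO_{\Sym^{3d-1}(\P^2)^0,\bar{\eta}_{tac}}$ with uniformizer $s$; since $\sGW$ is an unramified sheaf it suffices to prove that $\Wel_d$ extends over $\sO$, and, exactly as in Proposition~\ref{prop:UnramCusp}, by passing to the henselization (or completion) we may replace $\sO$ by $K[[s]]$. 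First I would base-change the whole picture and consider $\bar{\sM}_{0,3d-1}^\Sigma(\P^2, d)^{\wedge_s}:=\bar{\sM}_{0,3d-1}^\Sigma(\P^2, d)^0\times_{\Sym^{3d-1}(\P^2)^0}\Spec K[[s]]\to\Spec K[[s]]$. Its closed fibre is $\mathfrak{z}^{-1}(\bar{\eta}_{tac})$, which is finite because for $n=3d-1$ the map $\mathfrak{z}$ has relative dimension zero (Lemma~\ref{lem:DimOrd}(4)); write it as $\{x_1,\dots,x_N\}$ with $x_1,\dots,x_r$ the points whose image curve has a single tacnode (and otherwise only ordinary double points) and $x_{r+1},\dots,x_N$ the points whose image has only ordinary double points. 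No curve in this fibre can have a cusp, a triple point, a reducible source, or two tacnodes: each such phenomenon would force $\bar{\eta}_{tac}$ to lie on another codimension one degenerate divisor, or in codimension $\ge2$ in $\Sym^{3d-1}(\P^2)^0$, which is excluded by Lemma~\ref{lem:DeltaProps}(2); and since $\bar{\eta}_{tac}$ is a general point of its divisor, the $3d-1$ marked points avoid the singular locus of every curve in the fibre.

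Next I would show that $\mathfrak{z}$ is \'etale at every $x_j$. For $j>r$ this is Lemma~\ref{lem:Etal2}. For $j\le r$ the source $C_{x_j}$ is a smooth rational curve and $f_{x_j}$ is an immersion, because both branches of a tacnode are smooth; hence conditions (i), (ii) in the definition of $\bar{\sM}_{0,n}^\Sigma(\P^2, d)_\delta$ hold, $N_{f_{x_j}}$ is the line bundle $\sO_{\P^1}(3d-2)$, and the argument of Lemma~\ref{lem:smoothness}(i) applies verbatim: first order deformations of $f_{x_j}$ are $H^0(N_{f_{x_j}})$, the obstruction space $H^1(N_{f_{x_j}})$ vanishes, and $H^1(N_{f_{x_j}}(-\sum_i p_i))=H^1(\sO_{\P^1}(-1))=0$ makes $d\mathfrak{z}$ surjective, so $\mathfrak{z}$ is smooth, hence \'etale by a dimension count. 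This is the tacnode counterpart of Propositions~\ref{prop:CuspRam1}--\ref{prop:CuspRam2}, with the key difference that $f$ is unramified at a tacnode, so $N_f$ carries no torsion and no ramification arises. It follows that $\bar{\sM}_{0,3d-1}^\Sigma(\P^2, d)^{\wedge_s}$ splits near its closed fibre as $\coprod_{j=1}^N\Spec L_j[[s]]$ with $L_j:=k(x_j)$ finite separable over $K$ and structure map $s\mapsto s$.

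Then I would compute the local Welschinger contributions. Let $\sC^{(j)}\to\Spec L_j[[s]]$ be the pulled-back family of plane curves; its generic fibre is a $\binom{d-1}{2}$-nodal rational curve over $L_j((s))$, and $\Wel_\delta(x_j)=\Wel_{L_j((s))}(\sC^{(j)}_{\eta})$. For $j>r$ the relative singular locus is \'etale over $\Spec L_j[[s]]$ (Lemma~\ref{lem:EtCrit} together with the equidimensionality remark), so by Lemma~\ref{lem:WelschExt} this invariant extends to a unit form over $L_j[[s]]$. For $j\le r$, near the tacnode point $p_0$ of $\sC^{(j)}_0$ I would split $\sC^{(j)}_{\sing}$ \'etale-locally as $\sD_{far}\amalg\sD_{tac}$, where $\sD_{far}$ gathers the $\binom{d-1}{2}-2$ nodes away from $p_0$ and is \'etale over $\Spec L_j[[s]]$, so its partial contribution $\Wel(\sC^{(j)},\sD_{far})$ extends to a unit form over $L_j[[s]]$ (Lemma~\ref{lem:WelschExt} and the partial-invariant refinement in the remark following Lemma~\ref{lem:EtCrit}). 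For $\sD_{tac}$: in suitable local analytic coordinates $\sC^{(j)}_0$ has the tacnode $y^2-x^4=0$ at $p_0$, and the versal picture shows that over $L_j[[s]]$ its two branches become $y=x^2+sa+\cdots$ and $y=-x^2+sb+\cdots$ with $a,b\in L_j$, meeting at the two nodes whose $x$-coordinates are $\pm x_0$ with $x_0^2=s(b-a)/2\in L_j((s))$. A direct computation of the Hessian of the local equation at either node yields the same value $-16\,x_0^2$. Therefore the contribution of $\sD_{tac}$ to $\Wel_{L_j((s))}(\sC^{(j)}_{\eta})$ is $\langle(-16x_0^2)^2\rangle$ when the two nodes are $L_j((s))$-rational, and $\langle\Nm_{M/L_j((s))}(-16x_0^2)\rangle$ when they form one closed point with (ramified quadratic) residue field $M$; since $-16x_0^2$ lies in $L_j((s))$, in both cases this is a square, i.e. $\langle1\rangle$. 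Hence $\Wel_{L_j((s))}(\sC^{(j)}_{\eta})$ is a unit form over $L_j[[s]]$ for every $j$.

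Finally, by the definition of $\Wel_d$ and the splitting above, the image of $\Wel_d$ in $\GW(K((s)))$ equals $\sum_{j=1}^N\Tr_{L_j((s))/K((s))}\Wel_{L_j((s))}(\sC^{(j)}_{\eta})$, and each summand is the transfer of a unit form along the finite \'etale extension $L_j((s))/K((s))$, hence extends over $K[[s]]$; thus $\del_s\Wel_d=0$, i.e.\ $\Wel_d$ is unramified at $\bar{\eta}_{tac}$. I expect the main obstacle to be the last paragraph of the local analysis: setting up the normal form of the resolved tacnode over $L_j[[s]]$ cleanly and carrying out the Hessian computation, together with the observation that the "discriminant" $x_0^2$ of the two new nodes lies in the base field $L_j((s))$ --- this is exactly what forces their joint contribution to be a perfect square, and it is the tacnode analogue of the fact, used in the real case by Welschinger and Itenberg--Kharlamov--Shustin, that crossing a tacnode wall does not change the invariant. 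The secondary point requiring care is the \'etaleness of $\mathfrak{z}$ at the tacnode points in the closed fibre (step two), since these points lie in $\bar{\sM}_{0,3d-1}^\Sigma(\P^2, d)^0$ but not in $\bar{\sM}_{0,3d-1}^\Sigma(\P^2, d)^0_\delta$, so Lemma~\ref{lem:smoothness} must be invoked through its proof rather than its statement.
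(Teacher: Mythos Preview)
Your overall architecture matches the paper's: reduce to the local invariant near $\eta_{tac}$, use that $f$ is an immersion to get \'etaleness of $\mathfrak{z}$, split the singular locus into the ``far'' nodes (\'etale over $L_j[[s]]$) and the two nodes emerging from the tacnode, and show the latter contribute an unramified class. Your \'etaleness argument and the handling of the far nodes are fine.

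The gap is in your local computation at the tacnode. You assert the normal form $y^2-x^4=0$ with branches $y=\pm x^2+\cdots$ defined over $L_j[[s]]$, but this presupposes that the two branches of the tacnode are individually $L_j$-rational. The paper's main case is the opposite one: $f^{-1}(q_0)$ is a single closed point with residue field $L_j(\sqrt{a})$, and the two branches are Galois-conjugate. In that case the honest local equation is $y^2=(x+a)x^4$ (after the paper's choice of parametrization), not $y^2=x^4$, and one cannot separate the branches over $L_j$. Working with the actual deformation $y^2=(x+a)(x^2-u)^2$ the paper finds the local invariant at the emerging node to be $\langle -(x_0+a)\rangle$ in $\GW(K((u))(x_0))$ with $x_0^2=u$; this is \emph{not} in the base field, and the two nodes do \emph{not} carry the same Hessian value (they carry conjugate values $-(x_0+a)$ and $-(a-x_0)$), contrary to your claim. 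What saves the day is that the norm is $\Nm_{K((u))(x_0)/K((u))}(-(x_0+a))=a^2-u$, a unit in $K[[u]]$ since $a\neq 0$; this is the paper's punchline.

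Your heuristic ``the Hessian lies in the base field, so its norm is a square'' is thus not justified by the normal form you wrote down: the higher-order terms you suppress with ``$+\cdots$'' (e.g.\ the $x^5$ coming from the factor $(x+a)$) genuinely enter the Hessian at the nearby nodes. One can in fact push your idea through by first making a formal change of coordinates over $L_j[[u]]$ to bring the $\delta$-constant family to $y^2=a(\tilde{x}^2-v)^2$, after which the Hessian is $-16av\in L_j((v))$ and the norm is visibly a square; but that coordinate change is exactly the missing step, and it must be done over the base, not over $L_j(\sqrt{a})$. The paper avoids this by computing directly with the parametrized map and taking the norm.
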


\begin{proof} Let $u$ be a parameter at $\eta_{tac}$ in $\bar{\sM}_{0,3d-1}^\Sigma(\P^2, d)$ and let $K=k(\eta_{tac})$. We write $f:C\to \P^2_K$ for $f_{\eta_{tac}}:C_{\eta_{tac}}\to \P^2_{k(\eta_{tac})}$.

Let $f_u:C_u\to \P^2_{K((u))}$ be the morphism corresponding to the morphism 
\[
\Spec K((u))\to \bar{\sM}_{0,3d-1}^\Sigma(\P^2, d). 
\]
Since the map $f$ is unramified, the map $\mathfrak{z}$ is \'etale at $\eta_{tac}$, and we just need to see that $\Wel_{K((u))}(f_u(C_u))\in \GW(K((u)))$ is unramified at $u$. 

Since $f(C)$ has a single tacnode, this tacnode is a $K$-point of $f(C)$, so we may assume that $f(C)$ has $q_0:=(1:0,0)$ as its tacnode. The closed subscheme $f^{-1}(q_0)$ is either a single closed point with residue field $K(f^{-1}(q_0))$ a degree two extension of $K$ or is a  disjoint union of two $K$-points of $C$. We discuss first the case of a single closed point $p\in C$. 

As the characteristic is not 2, we may write $K(p)=K[t]/(t^2-a)$ for some $a\in K^\times$. This gives us an isomorphism of the completion $\sO_{C,p}^\wedge$ with the completion 
\[
K[t]_{(t^2-a)}^\wedge:=\lim_{\substack{\leftarrow\\n} }K[t]/(t^2-a)^n.
\]
In suitable analytic coordinates $(x, y)$ for the completion 
$\sO_{\P^2, q_0}^\wedge$ we may express $f$  as
\[
f=f(t)=(t^2-a, t(t^2-a)^2)
\]
so in $\Spec\sO_{\P^2, q_0}^\wedge=\Spec K[[x,y]]$ we have the equation $y^2=(x+a)x^4$ describing $f(C)^\wedge:=f(C)\times_{\P^2}\Spec K[[x,y]]$.

The deformation theory of $f$ gives, for a suitable choice of parameter $u$, the description of $f_u:C_u^\wedge\to \P^{2\wedge}_{K[[u]]}$ as
\[
f_u(t)=(t^2-a, t(t^2-a)^2-ut).
\]
The image curve $f_u(C_u)^\wedge$ thus satisfies the equation $y^2=(x+a)(x^2-u)^2$. Over $K((u))$, this curve has the ordinary double point $p_u$ defined by the ideal $(x^2-u, y)\subset K((u))[[x, y]]_u$, where $K((u))[[x, y]]_u$ is the completion of $K((u))[x,y]$ at this ideal. $K((u))[[x, y]]_u$ has has parameters the elements $s:=x^2-u$ and $y$, and  $f_u(C_u)^\wedge$ thus satisfies the equation $y^2-(x+a)s^2=0$. The local invariant is thus $\<-x-a\>\in \GW(K((u))(p_u))$. 

The ``remaining'' singular points of $f_u(C_u)$ extend to define a finite \'etale cover of $\Spec K[[u]]$, so by Lemma~\ref{lem:WelschExt}, there is a unit $U\in K[[u]]^\times$ such that
\[
\Wel_{K((u))}(f_u(C_u))=\<U\cdot \Nm_{K((u))(p_u)/K((u))}(x+a)\>
\]
But $\Nm_{K((u))(p_u)/K((u))}-x-a=(x+a)(-x+a)=a^2-x^2=a^2-u$. Since $a\neq0$, this is a unit in $K[[u]]$, so $\Wel_{K((u))}(f_u(C_u))$ extends to a section of $\sGW$ over $K[[u]]$, proving the proposition.

The case of $f^{-1}(q_0)$ splitting into two $K$-points is similar but a bit easier. Since in this case $C$ has a $K$-point, $C$ is isomorphic to $\P^1$ and we may assume that $f^{-1}(q_0)=\{\pm 1\}$. The computation is then completely parallel, with the simplification that  $a=1$. 
\end{proof}

\section{Curves with a triple point}

\begin{proposition} \label{prop:UnramTri} Let  $\bar{\eta}_{tri}=\mathfrak{z}(\eta_{tri})$ . Then $\Wel_d\in H^0(\Sym^{3d-1}(\P^2)^0_\delta, \sGW)$ is unramified at $\bar{\eta}_{tri}$.
\end{proposition}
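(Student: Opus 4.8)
Here is the plan, modeled on the tacnode argument of Proposition~\ref{prop:UnramTac}; the one new feature is that a single triple point splits into \emph{three} nodes whose residue fields — governed by the splitting type of the fibre over the triple point — must be followed through the norms defining $\Wel$.

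First I would set up notation. Write $L=k(\eta_{tri})$ and $K=k(\bar{\eta}_{tri})$, pick a uniformizer $u$ at $\eta_{tri}$ and one, $s$, at $\bar{\eta}_{tri}$, and let $f:C\to\P^2_L$ (with its reduced degree $3d-1$ divisor $\mathfrak{d}$) denote $f_{\eta_{tri}}$. Since $f(C)$ has a single ordinary triple point and otherwise only ordinary double points, $f$ is unramified, and $C$ carries a closed point of degree $1$ or $3$, so $C\cong\P^1_L$; the Riemann--Roch computation of Lemma~\ref{lem:DimOrd} (now $N_f\cong\sO_{\P^1}(3d-2)$ and $H^1(\P^1,N_f(-\mathfrak{d}))\cong H^1(\P^1,\sO_{\P^1}(-1))=0$) shows $\mathfrak{z}$ is étale at $\eta_{tri}$, so $L[[u]]$ is finite étale over $K[[s]]$. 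Decomposing $\bar{\sM}_{0,3d-1}^\Sigma(\P^2,d)\times_{\Sym^{3d-1}(\P^2)}\Spec K[[s]]$ into formal components exactly as in the proof of Proposition~\ref{prop:UnramCusp}: the components over node-only points of $\mathfrak{z}^{-1}(\bar{\eta}_{tri})$ are étale over $K[[s]]$ with étale relative singular locus, hence contribute unramified classes by Lemma~\ref{lem:WelschExt}; and since $\bar{\sM}_{0,3d-1}^\Sigma(\P^2,d)_{tri}$ is irreducible with generic point $\eta_{tri}$, the only remaining component is $\Spec L[[u]]$ itself. Thus it suffices to show that $\Wel_{L((u))}(f_u(C_u))\in\GW(L((u)))$ extends to $\GW(L[[u]])$, and then to trace down along the étale map $K[[s]]\to L[[u]]$.

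Next I would localize at the triple point $q_0$, which is $L$-rational. The scheme $f^{-1}(q_0)$ is a degree three separable $L$-scheme, of type $(L,L,L)$, $(L,M_2)$ with $[M_2:L]=2$, or $(M_3)$ with $[M_3:L]=3$, and the three branches of $f(C)$ at $q_0$ are smooth with pairwise distinct tangent lines. For dimension reasons ($\eta_{tri}$ is a codimension one point and the generic nearby curve has only ordinary double points, while the triple point has $\delta$-invariant $3$) the deformation $f_u$ over $L((u))$ has, near $q_0$, exactly three ordinary nodes $p_{12},p_{13},p_{23}$, one per pair of branches, permuted by $\Gal(\bar{L}/L)$ through its action on the branches; all other singular points of $f(C)$ survive and form a finite étale cover of $\Spec L[[u]]$, contributing a unit $U\in L[[u]]^\times$ to $\Wel_{L((u))}(f_u(C_u))$ (via the partial-invariant refinement of Lemma~\ref{lem:WelschExt} noted after Lemma~\ref{lem:EtCrit}). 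At $p_{ij}$ only the two branches $\ell_i=0$, $\ell_j=0$ pass through, so the local equation of $f_u(C_u)$ there is $v\cdot\ell_i\ell_j$ with $v$ a unit and $\ell_i,\ell_j$ smooth and transverse; since $\ell_i\ell_j$ and its differential vanish at $p_{ij}$ one gets $\det\bigl(\del^2(v\ell_i\ell_j)/\del t_a\del t_b\bigr)(p_{ij})=-\bigl(v\cdot(\nabla\ell_i\wedge\nabla\ell_j)\bigr)^2(p_{ij})$, hence $e_{p_{ij}}=\langle -d_{ij}\rangle$ where $d_{ij}=1$ if both branches are rational over the residue field of $p_{ij}$, and otherwise $d_{ij}$ is the discriminant of the quadratic extension splitting them. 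In every splitting type $d_{ij}$ is inherited from the fixed separable $L$-scheme $f^{-1}(q_0)$ and so does not involve $u$; it is a unit.

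The conclusion is then a short bookkeeping over the three types: in the $(M_3)$ case the three nodes form one closed point of $f_u(C_u)$ with residue field cubic over $L((u))$, in the $(L,M_2)$ case they split as a rational node plus a quadratic point, and in the $(L,L,L)$ case they are three rational points; in each case $\prod_p\Nm_{L((u))(p)/L((u))}\det(\del^2/\del t_a\del t_b)(p)$, the product over the new nodes, works out to a unit of $L[[u]]$ times the square of a product of $\Gal$-conjugate quantities $c_{ij}$, hence is a unit of $L[[u]]$ modulo squares. Therefore $\Wel_{L((u))}(f_u(C_u))=\langle U\cdot(\text{unit})\rangle$ lies in $\GW(L[[u]])$, and tracing down to $\GW(K[[s]])$ and summing over the fibre gives Proposition~\ref{prop:UnramTri}. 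The only real work is this last bookkeeping — keeping track of which nodes are $\Gal$-conjugate and checking that the norms of their Hessian determinants combine into a $u$-unit modulo squares; the geometric input (a plane node contributes $\langle -1\rangle$ up to the discriminant of the quadratic extension splitting its two branches, and that discriminant is constant in $u$) is elementary and completely parallel to the identity $\Nm_{K((u))(p_u)/K((u))}(-x-a)=a^2-u$ used for the tacnode.
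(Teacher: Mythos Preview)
Your proposal is correct, and the overall architecture (étaleness of $\mathfrak{z}$ at $\eta_{tri}$, isolating the contribution of the three nodes born from the triple point, using the partial-invariant form of Lemma~\ref{lem:WelschExt} for the remaining singularities) matches the paper. The genuine difference is how you handle the Galois type of $f^{-1}(q_0)$.

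The paper does \emph{not} do your case analysis. Instead it invokes the odd-degree base-change trick: since $\GW(L)\to\GW(M)$ (and likewise $W(L)\to W(M)$) is injective for $[M:L]$ odd, one may pass to an extension over which $f^{-1}(q_0)$ acquires a rational point, reducing at once to your $(L,L,L)$ or $(L,M_2)$ cases. The paper then writes down explicit analytic coordinates so that the triple point is $y(y^2-ax^2)=0$, identifies the one-parameter deformation in the local miniversal family with three nodes as $y((y+u)^2-ax^2)=0$, and reads off the two local invariants $\langle -a\rangle$ and $\langle -1\rangle$ directly.

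Your route is more conceptual: the identity $\det\bigl(\partial^2(v\ell_i\ell_j)\bigr)(p_{ij})=-(v\cdot\nabla\ell_i\wedge\nabla\ell_j)^2(p_{ij})$ is a clean way to see that each new node contributes $\langle -d_{ij}\rangle$ with $d_{ij}$ the branch-splitting discriminant, and your observation that $d_{ij}$ is determined by the $\Gal(\bar L/L)$-action on $f^{-1}(q_0)$ (hence lies in some fixed finite extension of $L$, independent of $u$) is exactly what makes the product a unit of $L[[u]]$ modulo squares. What this buys you is a uniform argument that would work without the odd-degree injectivity statement. What the paper's approach buys is that the $(M_3)$ case---where your ``bookkeeping'' is least trivial, especially the non-Galois $S_3$ subcase in which the residue field of the node-orbit is a cubic extension over which the two branches are still conjugate---simply never arises. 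In that subcase your $d_{ij}$ is the discriminant of the quadratic subextension of the Galois closure and its norm from the cubic is $d_{ij}^3\equiv d_{ij}\in L^\times$ modulo squares, so your claim goes through; but this is a real computation you have deferred, and the paper's trick sidesteps it entirely.
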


\begin{proof} The proof is analogous to the proof of Proposition~\ref{prop:UnramTac}. Let $u$ be a parameter at $\eta_{tri}$ in $\bar{\sM}_{0,3d-1}^\Sigma(\P^2, d)$ and let $K=k(\eta_{tri})$. We write $f:C\to \P^2_K$ for $f_{\eta_{tri}}:C_{\eta_{tri}}\to \P^2_{k(\eta_{tri})}$.

Let $f_u:C_u\to \P^2_{K((u))}$ be the morphism corresponding to the morphism 
\[
\Spec K((u))\to \bar{\sM}_{0,3d-1}^\Sigma(\P^2, d). 
\]
Since the map $f$ is unramified, the map $\mathfrak{z}$ is \'etale at $\eta_{tri}$, and we just need to see that $\Wel_{K((u))}(f_u(C_u))\in \GW(K((u)))$ is unramified at $u$. 

As before, we may assume that the triple point of $f(C)$ is at $q_0=(1:0:0)$. Then $f^{-1}(q_0)$ is a degree three extension of $K$. Since base extension by an odd degree extension is injective on $\GW$, we may assume that $f^{-1}(q_0)$ contains a $K$-point $p_1$. Thus $C\cong \P^1_K$. The remainder $f^{-1}(q_0)\setminus\{p_0\}$ is then a degree two extension of $K$; we consider first the case in which 
$f^{-1}(q_0)\setminus\{p_0\}$ is a single closed point $p_1$ of $\P^1$ with residue field $K(p_1)$ a degree two extension of $K$. 

We may assume that $p_0$ and $p_1$ lie in the affine open $T_0\neq0$. Letting $t=T_1/T_0$ be the standard coordinate, we may assume that $p_1$ is defined by the ideal $(t^2-a)$ for some $a\in K^\times$ and that  $p_0$ is defined by $t-b$ for some $b\in K$. We let $K[[t]^\wedge$ denote the completion of $K[t]$ with respect to the ideal $(t-b)(t^2-a)$. Then $K[[t]^\wedge$ breaks up as a product
\[
K[[t]^\wedge\cong K[[z]]\times K[t]/(t^2-a)[[w]]
\]
with $K[[z]]$ the completion at $t-b$ and $K(p_1)[[w]]$ the completion at $t^2-a$. For a suitable choice of parameters $z, w$ and analytic coordinates $x, y$ for $\sO_{\P^2, q_0}^\wedge$, the map $f$ is given by 
\[
f(z)=(z, 0),\ f(w)=(w, tw)
\]
and then $f(C)$ satisfies the equation $y(y^2-ax^2)=0$. 

As a triple point on a plane curve lowers the genus by 3, the curves in the deformation space of $f(C)$ corresponding to deformations $f_u:\P^1_{K[[u]]}\to \P^2_{K[[u]]}$ will have 3 nearby ordinary double points. The local deformation space for the equation $g(x, y)=y(y^2-ax^2)=0$ is the ring
\[
K[x, y]/(\del g/\del x, \del g/\del y)= K[x, y]/(3y^2-ax^2, xy)=K\cdot 1\oplus K\cdot x\oplus K\cdot y\oplus K\cdot y^2
\]
so the general deformation of $g=0$ is $g_v=0$, 
\[
g_v(x, y)=y(y^2-ax^2)+v_1+v_2x+v_3y+v_4y^2
\]
The $v$ for which $g_v=0$ has three double points will all give cubic plane curves with three double points, which must necessarily be three lines intersecting pairwise transversely.  An equation of the form $g_v(x, y)=0$ gives three lines if and only if 
\[
g_v(x, y)=g_u(x,y):=y(y^2-ax^2)+2uy^2 +u^2y=y((y+u)^2-ax^2)
\]
and the lines intersect transversely if and only if $u\neq0$.

As the map from the deformation space of $f$ to the deformation space of $f(C)\subset \P^2$ as a rational curve is \'etale, we may thus use $u$ as our local parameter at $\eta_{tri}$. The curve $g_u=0$ has the double points $q_{0u}=(0,-u)$ and $q_{1u}=(x_u, 0)$, with $x_u$ defined by the ideal $(u^2-ax^2)\subset K((u))[x]$. At $q_{0u}$, the local invariant is $-a$, at $q_{1u}$ we may use parameters $y$ and $(y+u)^2-ax^2$, so the local invariant is $-1$. As in the proof of Proposition~\ref{prop:UnramTac}, there is a unit $U\in K[[x]]^\times$ so that 
\[
\Wel_{K((u))}(f_u(C_u))=\<U\cdot -a\cdot \Nm_{K((u))(x_u)/K((u))}(-1)\>=
\<-aU\>
\]
so $\Wel_{K((u))}(f_u(C_u))$ is unramified at $\eta_{tac}$.

If $f^{-1}(q_0)\setminus\{p_0\}$ consists of two $K$-points, the proof is the same: we may take $a=1$.
\end{proof}

\section{Reducible curves}

\begin{proposition} \label{prop:UnramRed} Let  $\bar{\eta}_{d_1, n_1}=\mathfrak{z}(\eta_{d_1, n_1})$ . Then $\Wel_d\in H^0(\Sym^{3d-1}(\P^2)^0_\delta, \sGW)$ is unramified at $\bar{\eta}_{d_1, n_1}$.
\end{proposition}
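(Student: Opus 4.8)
The plan is to run the argument of Proposition~\ref{prop:UnramTac}, with the combinatorics of a smoothing of a reducible curve playing the role of the local tacnode model. Fix one of the codimension-one points $\bar{\eta}_{d_1,n_1}$ in the list of Lemma~\ref{lem:DeltaProps}(2), put $K=k(\bar{\eta}_{d_1,n_1})$, and let $s$ be a uniformizer at $\bar{\eta}_{d_1,n_1}$ on $\Sym^{3d-1}(\P^2)^0$. The first observation is that $\mathfrak{z}$ is \emph{\'etale} at $\eta_{d_1,n_1}$: this point lies in $D^{(2)}_\delta\setminus D^{(3)}\subset \bar{\sM}_{0,3d-1}^\Sigma(\P^2,d)_{ord}$ and not in the cuspidal locus, so Lemma~\ref{lem:Etal2} applies. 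Moreover, by Lemma~\ref{lem:DeltaProps}(2) and Lemma~\ref{lem:DimOrd}(3)(4) the images under $\mathfrak{z}$ of the cuspidal, tacnodal and triple-point strata and of the reducible strata of other combinatorial types are closed subsets of $\Sym^{3d-1}(\P^2)^0$ not containing $\bar{\eta}_{d_1,n_1}$, so the whole fibre $\mathfrak{z}^{-1}(\bar{\eta}_{d_1,n_1})$ is contained in $\bar{\sM}_{0,3d-1}^\Sigma(\P^2,d)_{ord}\setminus\bar{\sM}_{0,3d-1}^\Sigma(\P^2,d)^0_{cusp}$, and $\mathfrak{z}$ is finite \'etale over an open neighbourhood of $\bar{\eta}_{d_1,n_1}$. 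Hence, exactly as in the proof of Proposition~\ref{prop:UnramCusp}, the base change $\bar{\sM}_{0,3d-1}^\Sigma(\P^2,d)\times_{\Sym^{3d-1}(\P^2)}\Spec K[[s]]$ is a finite disjoint union $\coprod_j\Spec L_j[[s]]$ with each $L_j/K$ finite separable, and
\[
\Wel_d=\sum_j\Tr_{L_j((s))/K((s))}\bigl(\Wel_{L_j((s))}(\sC_j)\bigr),
\]
where $\sC_j$ denotes the generic fibre of the pullback of the universal plane curve $\sC_{0,d}$ to $\Spec L_j[[s]]$. So it is enough to show that each $\Wel_{L_j((s))}(\sC_j)$ is unramified at $s$.

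For this I would distinguish two kinds of branch, according to the closed point $x_{j,0}$ of $\Spec L_j[[s]]\to\bar{\sM}_{0,3d-1}^\Sigma(\P^2,d)$. If $x_{j,0}\in\sM_{0,3d-1}^\Sigma(\P^2,d)^0_\delta$, then the family over $\Spec L_j[[s]]$ consists of irreducible rational degree-$d$ curves with only ordinary double points, its relative singular locus is finite \'etale of degree $p_a(d)$ over $\Spec L_j[[s]]$ by Lemma~\ref{lem:Etale1} and Lemma~\ref{lem:EtCrit}, and Lemma~\ref{lem:WelschExt} gives the extension of $\Wel_{L_j((s))}(\sC_j)$ to $\GW(L_j[[s]])$. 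Otherwise $x_{j,0}$ is a reducible curve $Z_1\cup Z_2=f(C_1)\cup f(C_2)$ of the given type, with $Z_i$ a nodal rational curve of degree $d_i$, with $C_1$ and $C_2$ meeting in one point of the source, and with $Z_1,Z_2$ meeting transversally in $d_1d_2$ further distinct points; thus $Z_1\cup Z_2$ has exactly $p_a(d_1)+p_a(d_2)+d_1d_2=p_a(d)+1$ ordinary double points. On the other hand $\sC_j$ is an irreducible rational degree-$d$ curve with exactly $p_a(d)$ ordinary double points, the $j$-th branch being generically in $\sM_{0,3d-1}^\Sigma(\P^2,d)^0_\delta$ because $\mathfrak{z}$ is \'etale over a neighbourhood of $\bar{\eta}_{d_1,n_1}$ whose generic point lies in $\Sym^{3d-1}(\P^2)^0_\delta$. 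By upper semicontinuity of the $\delta$-invariant, at most one node of $\sC_j$ can specialise to a given node of $Z_1\cup Z_2$, so the $p_a(d)$ nodes of $\sC_j$ specialise injectively into the $p_a(d)+1$ nodes of $Z_1\cup Z_2$; exactly one node $q_0$ (necessarily the image of the node of the source curve) receives none of them and is smoothed, while the other $p_a(d)$ nodes deform in a family finite \'etale over $\Spec L_j[[s]]$. Concretely, let $\sD_j$ be the closure, in the total space of the family over $\Spec L_j[[s]]$, of the singular locus of $\sC_j$. One checks that $\sD_j$ is a closed subscheme of the relative singular locus with $(\sD_j)_{L_j((s))}$ equal to the singular locus of $\sC_j$, that it is finite flat over $\Spec L_j[[s]]$ — hence equidimensional, hence (cf.\ the last line of Lemma~\ref{lem:EtCrit}) \'etale — and that each of its fibre points is an ordinary double point of the corresponding fibre; Lemma~\ref{lem:WelschExt} then shows $\Wel_{L_j((s))}(\sC_j)$ extends to the one-dimensional form $\Wel(\sD_j)\in\GW(L_j[[s]])$.

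Since each $L_j[[s]]/K[[s]]$ is finite \'etale, the trace of an unramified element is unramified, so summing the displayed formula shows $\Wel_d$ extends to $\GW(K[[s]])$, i.e.\ is unramified at $\bar{\eta}_{d_1,n_1}$. The step needing the most care is the node-accounting above: that the one-parameter smoothing induced by $\bar{\sM}_{0,3d-1}^\Sigma(\P^2,d)$ destroys precisely the node $q_0$ carrying the node of the source and leaves the remaining $p_a(d)$ nodes in \'etale families. I expect to deduce this from the arithmetic-genus count just sketched, but if an explicit argument is wanted one can, as in Propositions~\ref{prop:UnramTac} and \ref{prop:UnramTri}, pass to the completed local rings at the $d_1d_2$ points of $Z_1\cap Z_2$, write the source near the distinguished intersection point as $\{zw=s\cdot(\text{unit})\}$, verify that its image there is the smooth curve $\{xy=s\cdot(\text{unit})\}$ while at the remaining $d_1d_2-1$ intersection points it is an equisingular family of nodes, and then assemble the local invariants, via Lemma~\ref{lem:WelschExt}, into a unit of $L_j[[s]]$.
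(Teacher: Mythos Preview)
Your argument is correct and follows essentially the same route as the paper: reduce, via \'etaleness of $\mathfrak{z}$ at the reducible point, to checking that $\Wel_{K((u))}(f_u(C_u))$ is unramified, and then observe that away from the single node $q_0=f(C_1\cap C_2)$ coming from the source the relative singular locus is finite \'etale over the base, so Lemma~\ref{lem:WelschExt} applies.

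The only real difference is in presentation. The paper works directly at the single point $\eta_{d_1,n_1}$ upstairs (taking $K=k(\eta_{d_1,n_1})$), asserts without further comment that $\Sing(f_u(C_u))\setminus\{q_0\}$ is finite \'etale over $\Spec K[[u]]$, and leaves the handling of the remaining (nodal) preimages of $\bar{\eta}_{d_1,n_1}$ implicit, since those are already covered by the argument of Lemma~\ref{lem:Ext1}. You instead set $K=k(\bar{\eta}_{d_1,n_1})$, decompose the whole fibre as in the cusp case, and justify the \'etaleness claim via the arithmetic-genus count $p_a(d_1)+p_a(d_2)+d_1d_2=p_a(d)+1$ together with upper semicontinuity of $\delta$. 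Your concern in the final paragraph is slightly misplaced: the node-counting already suffices for what you need, since it shows the $p_a(d)$ generic nodes specialise injectively to distinct nodes of $Z_1\cup Z_2$, whence Lemma~\ref{lem:EtCrit} gives \'etaleness of their closure; identifying the smoothed node as $q_0$ specifically is not required for the argument (though it is true, and is what the paper asserts directly).
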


\begin{proof} Let $K=k(\eta_{d_1, n_1})$ and write $f_{\eta_{d_1, n_1}}:C_{1, \eta_{d_1, n_1}}\cup C_{2, \eta_{d_1, n_1}}\to \P^2_K$ as $f:C_1\cup C_2\to\P^2_K$.  $f(C_1)$ is of degree $d_1$ and $f(C_2)$ of degree $d_2=d-d_1$ and it follows by a dimension count that, writing $\mathfrak{d}=\mathfrak{d}_1+\mathfrak{d}_2$ with $\mathfrak{d}_i$ supported on $C_i$ and of degree $n_i$, we must have $n_1=3d_1-1$ and $n_2=3d_2$ or 
$n_1=3d_1$ and $n_2=3d_2-1$. In particular, exactly one of $\mathfrak{d}_1, \mathfrak{d}_2$ has degree congruent to 2 modulo 3, and thus there are no automorphisms of the map $f$.  Therefore $\bar{\sM}_{0,3d-1}^\Sigma(\P^2, d)$ is a scheme in a neighborhood of $\eta_{d_1, n_1}$, the map $\mathfrak{z}$ is unramified at $(f, C, \mathfrak{d})$, and we need only check, that for a parameter $u$ at $\eta_{d_1, n_1}$ in $\bar{\sM}_{0,3d-1}^\Sigma(\P^2, d)$, with corresponding morphism $f_u:C_u\to \P^2_{K[[u]]}$, the  invariant $\Wel_{K((u))}(f_u(C_u))$ is unramified at $u$. 

Let $\Sing(f_u(C_u))\subset f_u(C_u)$ be the scheme of singularities of the fibers. We have the isolated point  $q_0:=f(C_1\cap C_2)$ of $\Sing(f_u(C_u))$, but the open and closed subscheme $\Sing(f_u(C_u))\setminus \{q_0\}$ is finite and \'etale over $\Spec K[[u]]$. It follows from  Lemma~\ref{lem:WelschExt} that $\Wel_{K((u))}(f_u(C_u))$ is unramified at $u$. 
 \end{proof}
 
\section{Del Pezzo surfaces} We have concentrated on the case of $\P^2$, but as  in the paper of Itenberg-Kharlamov-Shustin \cite{IKS}, the methods described above work as well, with some appropriate restrictions, for a smooth del Pezzo surface $S$.  Recall that a del Pezzo surface over a field $k$ is a smooth projective surface $S$ over $k$ such that the anti-canonical class $-K_S$  is ample.  The definition of the Welschinger invariant of a geometrically integral nodal curve $C\subset S$ is exactly the same as for $S=\P^2$, or for an arbitrary smooth projective surface over a field $K$, for that matter.  

We recall that the {\em degree} of a del Pezzo surface $S$ is $d(S):=K_S^2$. Over an algebraically closed field, a del Pezzo surface is either a relative minimal model of $\P^2$ or the blow up of $\P^2$ at at most 8 points, that is, $S$ fits into a tower 
\[
S=S_0\to S_1\to \ldots \to S_d=\P^2
\]
with $0\le d\le 8$ and each $S_i$ is the blow-up of $S_{i+1}$ a some point $p_{d-i}$. The relatively minimal models are in three families: either $\P^2$, the family of Hirzebruch surfaces $F_n$ for $n$ even and the family of Hirzebruch surfaces $\Sigma_n$ for $n>1$ odd. $F_0=\P^1\times \P^1$, $F_1$ is the blow-up of $\P^2$ at one point (so not relatively minimal) and each $F_n$ has a small deformation that is either $F_1$ ($n$ odd) or $\P^1\times \P^1$. The relatively minimal models except for $\P^2$ all have degree 8, $\P^2$ has degree 9.

In particular, each del Pezzo surface  admits a small deformation to  a surface $S$ that, over an algebraically closed field is either  $\P^1\times\P^1$ or the blow-up of $\P^2$ at at most 8 (possibly 0) distinct points. To keep the discussion uniform, we call a surface that  over an algebraically closed field is a blow-up of $\P^2$ at at most 8 distinct points a {\em typical}   del Pezzo, so every del Pezzo has a small deformation to a typical del Pezzo or a form of $\P^1\times\P^1$. 

Let $S$ be a del Pezzo over a field $K$ isomorphic to $\P^1\times\P^1$ over $\bar{K}$. The two rulings $\pi_1:\P^1\times\P^1\to \P^1$ $\pi_2:\P^1\times\P^1\to \P^1$ then define two families over curves on $S\times_KL$ for some degree two extension $L$ of $K$ and thus give morphisms $p_1:S_L\to C_1$, $p_2:S_L\to C_2$ for $C_{1L}, C_{2L}$ conics over $L$. This  gives the isomorphism $(p_1, p_2)_L:S_L\to C_1\times_LC_2$.  The conjugation of $L$ over $K$ either exchanges the two rulings or not; if not, then the two families of curves on $S$ are defined over $L$ and we may descend $(p_1, p_2)_L$ to an isomorphism $(p_1, p_2):S\to C_1\times_K C_2$ for conics $C_1, C_2$ defined over $K$. If the conjugation exchanges the rulings, then $C_{1L}$ and $C_{2L}$ are isomorphic, $C_{1L}\cong C_L\cong C_{2L}$ for some conic $C$ defined over $K$ and we have the isomorphism $(p_1, p_2)_L:S_L\to C_L\times_LC_L$. Thus the diagonal on $C_L\times_LC_L$ gives a curve $\Delta_L\subset S_L$. If $\Delta_L$ is invariant under the conjugation, then $\Delta_L$ descends to a curve $\Delta\subset S$, and $\sO_S(\Delta)$ gives an embedding of $S$ as a quadric in $\P^3$. If not, then $\Delta_L\cap \Delta_L^\sigma$ defines a closed point $p$ of $S$ of degree 2 over $K$. 

In the first case, $S\cong C_1\times C_2$, $S$ has the degree 4 point $-p_1^*K_{C_1}\cap -p_2^*K_{C_2}$. If $S$ is a quadric in $\P^3$, then $S$ has a degree 2 point and in the last case, $S$ has a degree 2 point as well.  Thus a form of $\P^1\times\P^1$ always has a closed point of degree at most four over $K$; blowing up this point gives a typical del Pezzo surface.

If we have  a del Pezzo $S'$ over $K$ that has a small deformation $S$ (say over $K[[u]]$) with geometric model $\P^1\times\P^1$, then we may specialize the point on $S$ of degree $\le 4$ to $S'$; it is not hard to see that this can be done so that the resulting closed subscheme of the deformation family is \'etale over $K[[u]]$. Blowing up this point gives us a del Pezzo surface $\tilde{S}$ with a small deformation that is typical, and all our results about Welschinger  invariants for $S$ can be recovered from corresponding statements for $\tilde{S}$.

Fix a  an effective Cartier divisor $D$ on our del Pezzo surface $S$ with  $D^2\ge-1$. We consider the moduli space $\bar{\sM}^{*\Sigma}_{0,n}(S,D)$ parametrizing tuples $(C, f:C\to S, \mathfrak{d})$ with $C$ a genus zero quasi-stable curve, $\mathfrak{d}$ a reduced 0-cycle of degree $n$ on $C$ and $f:(C,\mathfrak{d})\to S$ a stable map, that is, over an algebraically closed field, if we write $\mathfrak{d}=\sum_{i=1}^np_i$, then $f:(C, (p_1,\ldots, p_n))\to S$ is stable.  In addition, we require that $f_*([C])$ is in the linear system $|D|$.

We have the open substack $\sM^\Sigma_{0,n}(S,D)$ of those $C$ which are geometrically integral and let $\bar{\sM}^\Sigma_{0,n}(S,D)$ be the closure of $\sM^\Sigma_{0,n}(S,D)$, that is, the unique connected component of $\bar{\sM}^{*\Sigma}_{0,n}(S,D)$ containing $\sM^\Sigma_{0,n}(S,D)$; this is a smooth Artin stack by the results of \cite{AbramOort, J}.

We replace $n=3d-1$ with $n=-D\cdot K_S-1:=r$ and replace $\Sym^n(\P^2)^0$ with $\Sym^n(S)^0$, the open subscheme of $\Sym^n(S)$ parametrizing reduced degree $n$ 0-cycles on $S$. We replace $\P^{N_d}$ with $|D|= \P^{N_D}$, and $P_{0,d}$ with $P_{0,D}$, the locally closed subset of  $\P^{N_D}$ of reduced irreducible curves $C\in |D|$ whose geometric normalization is a $\P^1$ and let $\bar{P}_{0,D}$ be the closure of $P_{0,D}$. We let $\sC_{0,D}\subset \bar{P}_{0,D}\times S$ be the universal effective Cartier divisor over $\bar{P}_{0,D}$.

We let $\bar{\sM}_{0,n}^\Sigma(S, D)^0$ be the inverse image of $\Sym^n(S)^0$ under the projection $\bar{\sM}_{0,n}^\Sigma(S, D)\to \Sym^n(S)$ and let $\sP^\Sigma_{S0}\to 
\bar{\sM}_{0,n}^\Sigma(S, D)^0$ be the universal curve. 

We replace the diagram \eqref{eqn:BasicDiagram} with 
\begin{equation}\label{eqn:BasicDiagram2}
\xymatrix{
&\sP^\Sigma_{S0}\ar[rd]^{f_{S0}^\Sigma}\ar[d]\\
&\bar{\sM}_{0,n}^\Sigma(S, D)^0\ar[dr]^{\mathfrak{D}_S^0}\ar[dl]_{\mathfrak{z}_S}&\sC_{0,D}\ar[d]\ar@{^(->}[r]&\bar{P}_{0,D}\times S\ar[dl]^{p_1}\\
\Sym^n(S)^0&&\bar{P}_{0,D}
}
\end{equation}

We define the substacks $\sM_{0,n}^\Sigma(S,D)^0$, $\bar{\sM}_{0,n}^\Sigma(S,D)_\delta$, $\sM_{0,n}^\Sigma(S,D)_\delta$, $\bar{\sM}_{0,n}^\Sigma(S,D)^0_\delta$, $\sM_{0,n}^\Sigma(S,D)^0_\delta$, $D^{(i)}(S,D)$, $D^{(i)}(S,D)_\delta$, $\bar{\sM}_{0,n}^\Sigma(S,D)_{?}$, $\sM_{0,n}^\Sigma(S,D)_{?}$ for $?\in \{cusp, tac, tri\}$ all as for $\P^2$.

For $D=D_1+D_2$, with $D_i$ both effective divisors, and with $r_1, r_2\ge 0$ integers with $r_1+r_2=r$,  we let $\bar{\sM}_{0,n}^\Sigma(S,D)_{(n_1, D_1)}$ be the closure in $\bar{\sM}_{0,n}^\Sigma(S,D)$ of the space of two-component reducible quasi-stable curves $C=C_1\cup C_2$ and 0-cycle $\mathfrak{d}=\mathfrak{d}_1+\mathfrak{d}_2$ with $f=f_1\cup f_2:C\to S$ such that $(f_i:C_i\to S, \mathfrak{d}_i)$ is in  $\bar{\sM}_{0,n_i}^\Sigma(S,D_i)$, $i=1, 2$. We define  $\bar{\sM}_{0,n}^\Sigma(\S,D)_{ord}$ as for $\P^2$:
\begin{multline*}
\bar{\sM}_{0,n}^\Sigma(S, D)_{ord}\\
=
\sM_{0,n}^\Sigma(S, D)^0_\delta\cup \sM_{0,n}^\Sigma(S, D)^0_{cusp}\cup \sM_{0,n}^\Sigma(S, D)^0_{tac,\delta}\\\cup \sM_{0,n}^\Sigma(S, D)^0_{tri,\delta}\cup (D^{(2)}(S,D)_\delta\setminus D^{(3)(S,D)})
\end{multline*}
We set  $\Sym^r(S,D)^0_{dgn}:=\mathfrak{z}(\bar{\sM}_{0,n}^\Sigma(S, D)^0\setminus \sM_{0,n}^\Sigma(\S, D)_\delta^0)$ and $\Sym^r(S,D)^0_\delta:=\Sym^r(S)^0\setminus
\Sym^r(S,D)^0_{dgn}$.
 
With these changes, the arguments of \cite[Lemmas 9, 10,11]{IKS} go through (even in positive characteristic $p>3$) to prove the analog of Lemma~\ref{lem:DimOrd}, with the following changes:\\[5pt]
1. $\bar{\sM}_{0,n}^\Sigma(S,D)$ might not be irreducible\\[3pt]
2. The statement on the codimension in (3) and (4) should be taken only for $n=r$, and should  refer to the image in $\Sym^r(S)$ of each generic point of each irreducible component $F$
\\[5pt]
These changes do not affect the overall argument and yield the analog of our Lemma~\ref{lem:DeltaProps}.

For $\mathfrak{d}\in \Sym^r(S,D)^0_\delta(K)$, we define $\Wel_{S,D}(\mathfrak{d})\in \GW(K)$ just as we did $\Wel_d(\mathfrak{d})$ for  $\mathfrak{d}\in \Sym^{3d-1}(\P^2)^0_\delta(K)$ and define $\Wel_{S,D}\in \GW(k(\Sym^r(S,D)^0))$ as $\Wel_{S,D}(\eta)$ for $\eta$ the generic point of $\Sym^r(S,D)^0$. We have the analog of Lemma~\ref{lem:Ext1}, namely

\begin{lemma}\label{lem:Ext1S}
$\Wel_{S,D}\in \GW(k(\Sym^r(S,D)^0))$ extends to a global section
\[
\Wel_d\in H^0(\Sym^r(S,D)^0_\delta, \sGW)
\]
Moreover
\[
ev_\mathfrak{d}(\Wel_{S,D})=\Wel_{S,D}(\mathfrak{d})
\]
for all  $\mathfrak{d}\in \Sym^r(S,D)^0_\delta$.
\end{lemma}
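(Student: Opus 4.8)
The plan is to run the proof of Lemma~\ref{lem:Ext1} word for word, replacing $\P^2$ by $S$, the degree $d$ by the class $D$, and $n=3d-1$ by $n=r=-D\cdot K_S-1$; everything else in that proof is formal once one has the del Pezzo analogs of Lemma~\ref{lem:Etale1} and Lemma~\ref{lem:Etal2}, since Lemma~\ref{lem:WelschExt}, Lemma~\ref{lem:EtCrit}, and (by the discussion preceding this lemma) the analog of Lemma~\ref{lem:DeltaProps} are already in hand. So the first task is the analog of Lemma~\ref{lem:Etale1}: the pullback of $\sC_{0,D}^\sing$ to $\sM_{0,n}^\Sigma(S,D)^0_\delta$ is \'etale over it. As on $\P^2$, at a point $x=(f:C\to S,\mathfrak{d})$ of $\sM_{0,n}^\Sigma(S,D)^0_\delta$ the map $f:C\to f(C)$ is birational with only ordinary double points in the image, so there are no automorphisms; moreover the normal sheaf of $f$ on the normalization $\P^1$ of $f(C)$ has degree $-K_S\cdot D-2=r-1\ge -1$, whence $H^1(\P^1,N_f)=0$, the deformations of $f$ are unobstructed, and $\sM_{0,n}^\Sigma(S,D)^0_\delta$ is smooth over $k$. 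Constancy of the number of nodes shows $\sC_{0,D}^\sing$ is equidimensional over it of relative dimension zero, and Lemma~\ref{lem:EtCrit}, applied with this smooth base, gives the \'etaleness.

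Next comes the analog of Lemma~\ref{lem:Etal2}: $\bar{\sM}_{0,n}^\Sigma(S,D)_{ord}$ is a smooth $k$-scheme, and $\mathfrak{z}_S:\bar{\sM}_{0,n}^\Sigma(S,D)_{ord}\setminus\bar{\sM}_{0,n}^\Sigma(S,D)^0_{cusp}\to\Sym^r(S)^0$ is \'etale. As above the relevant deformation spaces are unobstructed and automorphism-free, giving smoothness of the source and the fact that it is a scheme. For the \'etaleness, over a finite extension of the residue field $C\cong\P^1$, $f$ is unramified, and $N_f$ has degree $r-1$; since the attached reduced divisor $\mathfrak{d}$ has degree $n=r$, $N_f(-\mathfrak{d})$ has degree $-1$, so $H^1(\P^1,N_f(-\mathfrak{d}))=0$ and $H^0(\P^1,N_f)\to\bigoplus_i i_{p_i}^*N_f$ is surjective; exactly as in the proof of Lemma~\ref{lem:DimOrd} this makes $\mathfrak{z}_S$ smooth, hence \'etale because $\dim\bar{\sM}_{0,n}^\Sigma(S,D)=r-1+n$ equals $2n=\dim\Sym^n(S)$ for $n=r$. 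The two-component points of $D^{(2)}(S,D)_\delta\setminus D^{(3)}(S,D)$ are dealt with by the argument of Lemma~\ref{lem:smoothness}. The possible reducibility of $\bar{\sM}_{0,n}^\Sigma(S,D)$ is harmless: one argues on each irreducible component separately, on each of which $\mathfrak{z}_S$ is generically finite by the analog of Lemma~\ref{lem:DimOrd}(4).

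With these two analogs available, the proof of Lemma~\ref{lem:Ext1} transfers directly. Put $Y=\Sym^r(S,D)^0_\delta$; it is smooth over $k$, so $\sGW$ is an unramified sheaf on $Y$ and it suffices to lift $\Wel_{S,D}\in\GW(k(Y))$ to $\GW(\sO_{Y,x})$ at each codimension one point $x\in Y$. Since $\mathfrak{z}_S$ is \'etale over $Y$ and $\mathfrak{z}_S^{-1}(Y)\subset\sM_{0,n}^\Sigma(S,D)^0_\delta$, it is enough to extend $\mathfrak{D}_S^{0*}\Wel(C_\eta)$ over $\tilde Y_x:=\bar{\sM}_{0,n}^\Sigma(S,D)^0\times_Y\Spec\sO_{Y,x}$; and by the analog of Lemma~\ref{lem:Etale1} the pulled-back family $\sC_{\tilde Y_x}\to\tilde Y_x$ of curves on $S$ satisfies the hypotheses of Lemma~\ref{lem:WelschExt}. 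Here one uses the evident generalization of Lemma~\ref{lem:WelschExt} to curves on $S$ in place of $\P^2$: this holds by the same proof, as the purity sequence for $\GW$ over a DVR is intrinsic and the local invariant $e_p$ is computed from a local defining equation of the curve in \'etale coordinates on $S$. Finally, for $\mathfrak{d}\in\Sym^r(S,D)^0_\delta(K)$ the identity $\ev_{\mathfrak d}(\Wel_{S,D})=\Wel_{S,D}(\mathfrak{d})$ is obtained, exactly as in Lemma~\ref{lem:Ext1}, by choosing a chain of smooth closed subschemes $\mathfrak{d}=D_0\subset D_1\subset\cdots\subset D_m=\Spec\sO_{Y,\mathfrak{d}}$ with each step of codimension one and propagating the extension down the chain, using the unramifiedness of $\sGW$ together with the \'etaleness of $\mathfrak{z}_S$ away from the cusp locus. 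There is no genuine obstacle here; the only points needing (routine) care are the uniform normal-bundle count $\deg N_f=r-1$, the possible reducibility of $\bar{\sM}_{0,n}^\Sigma(S,D)$, and the one-line verification that Lemma~\ref{lem:WelschExt} is insensitive to the ambient surface.
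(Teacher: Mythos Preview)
Your proposal is correct and follows exactly the approach the paper intends: the paper gives no separate proof for this lemma, simply noting it as the del Pezzo analog of Lemma~\ref{lem:Ext1}, and you have faithfully transcribed that argument with the appropriate normal-bundle degree $-K_S\cdot D-2=r-1$ replacing $3d-2$. One small arithmetic slip: the dimension of $\bar{\sM}_{0,n}^\Sigma(S,D)$ is $r+n$, not $r-1+n$, so the equality with $2n$ for $n=r$ comes out correctly.
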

This does not require that $S$ be typical.

The consideration of the reducible curves requires some care. Here we have the following result taken from \cite{IKS}.

\begin{lemma} Suppose that $S_{\bar k}$ is the blow up of $s\le 8$ points in $\P^2_{\bar k}$, $p_1,\ldots, p_s$. We suppose that either $s<8$ or $s=8$ and $D\neq -2K_S$. Then for a general choice of $p_1,\ldots, p_s$,   $\mathfrak{z}(D^{(i)})$ has codimension $\ge 2$ on $\Sym^r(S)_0$ for all $i\ge 3$, and codimension $\ge1$ for $i=2$. Furthermore, $\bar{\sM}_{0,r}^\Sigma(S,D)$ is smooth at each generic point of $\bar{\sM}_{0,r}^\Sigma(S,D)_{(n_1, D_1)}$ and at  each generic point $\eta$ of  $\bar{\sM}_{0,r}^\Sigma(S,D)_{(n_1, D_1)}$ such that $\mathfrak{z}(\eta)$ has codimension one  on $\Sym^r(S)^0$, the corresponding maps $f_i:C_i\to S$ are unramified, $f(C_1\cup C_2)$ is reduced and has only ordinary double points. 
\end{lemma}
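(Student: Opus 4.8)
The plan is to follow \cite[Lemmas 9--11]{IKS}, rephrased in the stack language used above and checked to survive in characteristic $p>3$; there are essentially four steps. \emph{Codimension of the multi-component loci.} Recall that for a smooth projective surface and an effective class $D'$ admitting a rational representative, $\bar{\sM}_{0,m}(S,D')$ has expected dimension $-K_S\cdot D'-1+m$. First I would use the genericity of $p_1,\dots,p_s$ --- together with the hypothesis $s<8$, or $s=8$ and $D\neq -2K_S$ --- to conclude that every boundary stratum of $\bar{\sM}_{0,r}^\Sigma(S,D)$ has its expected dimension, with a dense open locus of immersed maps. A stratum of $D^{(i)}$ with $i$ components is the image of an $i$-fold fibre product $\prod_j\bar{\sM}_{0,m_j}(S,D_j)$ over $S$ taken along the node evaluation maps ($i-1$ fibre-product conditions, each of codimension $\dim S=2$), so a direct count gives
\[
\dim(\text{stratum})=(-K_S\cdot D-1+r)-(i-1)=\dim\bar{\sM}_{0,r}^\Sigma(S,D)-(i-1).
\]
Since $\dim\bar{\sM}_{0,r}^\Sigma(S,D)=2r=\dim\Sym^r(S)^0$ and the image of a morphism cannot exceed the dimension of its source, $\codim_{\Sym^r(S)^0}\mathfrak{z}(D^{(i)})\ge i-1$, which is $\ge 2$ for $i\ge 3$ and $\ge 1$ for $i=2$.

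\emph{Smoothness along the two-component strata.} Let $\eta$ be a generic point of $\bar{\sM}_{0,r}^\Sigma(S,D)_{(n_1,D_1)}$, with $f\colon C_1\cup C_2\to S$ the corresponding map and $C_1,C_2$ meeting at a node $p$. By the previous step each $f|_{C_i}$ is an immersion, so its normal sheaf $N_{f_i}=f_i^*T_S/T_{C_i}$ is an invertible sheaf on $C_i\cong\P^1$ of degree $-K_S\cdot D_i-2\ge -1$ --- here ampleness of $-K_S$ and effectiveness of $D_i$ force $-K_S\cdot D_i\ge 1$ --- whence $H^1(C_i,N_{f_i})=0$. I would then feed this, via a Mayer--Vietoris computation at the node as in \cite{IKS}, into the standard deformation theory of genus-zero stable maps to conclude that $f$ is unobstructed, so $\bar{\sM}_{0,r}^\Sigma(S,D)$ is smooth at $\eta$; as in the $\P^2$ case (the two components carry different numbers of marked points and each $f_i$ is birational onto its image), $f$ has no automorphisms, so $\bar{\sM}_{0,r}^\Sigma(S,D)$ is even a scheme near $\eta$.

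\emph{The codimension-one strata are unramified, reduced, nodal.} If moreover $\mathfrak{z}(\eta)$ has codimension one on $\Sym^r(S)^0$, then, exactly as in the analogue of Lemma~\ref{lem:DeltaProps}, the type $(n_1,D_1)$ is one of the finitely many with $n_1\in\{-K_S\cdot D_1,\,-K_S\cdot D_1-1\}$. For a general such $\eta$ I would argue: (i) each $f_i$ is unramified, already established; (ii) $f(C_1)$ and $f(C_2)$ are distinct curves --- automatic if $D_1\neq D_2$, and for $D_1=D_2$ a general pair of rational curves in that class is distinct --- so $f(C_1\cup C_2)$ is reduced; (iii) a general rational curve in class $D_i$ on a general del Pezzo has only ordinary nodes, $f(C_1)$ and $f(C_2)$ meet in $D_1\cdot D_2$ transverse points, and no such intersection point is simultaneously a self-node of either branch, so every singularity of $f(C_1\cup C_2)$ is an ordinary double point.

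\emph{The main obstacle.} The essential work is the characteristic-$p$ content of the first and third steps: the statement that a general rational curve in an effective class $D'$ with $(D')^2\ge -1$ on a general del Pezzo over a field of characteristic $>3$ is an immersion with only ordinary nodes, and that the relevant incidence varieties attain the expected dimension. IKS prove these over $\C$ using deformation theory and Bertini-type arguments; I would need to check that the underlying $H^1$-vanishings and genericity-of-singularities statements go through in characteristic $p>3$ (the restriction being precisely what rules out inseparability and wild-ramification pathologies). A secondary task is to locate where the exclusion $s=8$, $D=-2K_S$ enters: there $|-K_S|$ is merely a pencil, the expected-dimension statement fails, and $\bar{\sM}_{0,r}^\Sigma(S,D)$ acquires excess boundary components.
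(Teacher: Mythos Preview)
Your proposal is correct and takes the same approach as the paper: both simply defer to \cite[Lemmas 9--11]{IKS}. The paper's own proof consists of nothing more than that citation together with a one-sentence identification of the exceptional case $s=8$, $D=-2K_S$ (where one can have $f(C_1)=f(C_2)$ the same curve in $|{-K_S}|$), so your sketch of the dimension count, the unobstructedness argument, and the characteristic-$p$ checks is already considerably more detailed than what the paper records.
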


\begin{proof} This follows from \cite[Lemmas 9, 10,11]{IKS}. There is one exceptional case discussed in \cite[Lemma 11]{IKS}, namely, that of a del Pezzo of degree one ($s=8$) with $D=-2K_S$, that is, corresponding to the linear system of degree six curves in $\P^2$ having a double point at each $p_i$. In this case, it is possible that  $f(C_1)=f(C_2)$  are the same curve in $-K_S$.  
\end{proof}

We call a surface $S$ that satisfies the condition that $\mathfrak{z}(D^{(i)})$ has codimension $\ge 2$ on $\Sym^r(S)_0$ for all $i>2$ {\em general}. Clearly a small deformation of a typical del Pezzo is general. 

We discuss the exceptional case $s=8$, $D=-2K_S$. In this case the moduli space of stable maps of  a smooth integral genus 0 curve,  $\sM_{0,0}(S, -2K_S)$ is one dimensional, corresponding to maps $f:C\to \P^2$ with $f(C)$ having a point of multiplicity at least two at each $p_i$. Since $r=1$, there is no symmetric group action. We have the map $\mathfrak{z}: \bar{\sM}_{0,1}(S, -2K_S)\to S=\Sym^1(S)_0$ and for each $p\in S(K)$,  $\mathfrak{z}^{-1}(p)$ will contain a point $x_0$ corresponding to a map $f_0:C_1\cup C_2\to S$ with $f(C_1)=f(C_2)$, each corresponding to a nodal cubic curve in $\P^2$ containing $p$ and $p_1,\ldots, p_8$. As only one of $C_1, C_2$ can contain the 0-cycle $p$, there are no automorphisms of this data, and thus $\bar{\sM}_{0,1}(S, -2K_S)$ is a smooth scheme in a neighborhood of $x_0$. 

Let $q_1\neq q_1'\in C_1$ be the geometric points with $f_0(q_1)=f_0(q_1')$ the single node on the curve $f_0(C_1)$ and define $q_2\neq q_2'$ on $C_2$ similarly. For general $p_1,\ldots, p_8$, the points $q_i, q_i'$ will be disjoint from the intersection point $C_1\cap C_2$. Setting $K=k(x_0)$,  the local deformation theory of $f_0$ gives the neighborhood of $f_0$ in $\bar{\sM}_{0,1}(S, -2K_S)$ corresponding to a smoothing of $C_1\cup C_2$ to a smooth irreducible genus 0 curve $C_u$   and $f_0$ deforming to $f_u:C_u\to S$; we pass to the completion of $x_0$ in $\bar{\sM}_{0,1}(S, -2K_S)$ and consider $f_u$ as a map $f_u:C_u\to S_{K[[u]]}$. If the $p_1,\ldots, p_8$ are general, then $f_u(C_u)$ will be a curve on $S$ with two (geometric) nodes; let  $q_1(u), q_1'(u)$ be the (geometric) points of $(C_u)$ with $f_u(q_1(u))=f_u(q_1'(u)$ one of the nodes of $f_u(C_u)$ and let  $q_2(u), q_2'(u)$ be the (geometric) points of $(C_u)$ with $f_u(q_2(u))=f_u(q_2'(u)$ the other node.   The tuple $(q_1(u), q_1'(u), q_2(u), q_2'(u))$ specialize over $u\mapsto 0$ to $(q_1, q_1', q_2, q_2')\in C_1\cup C_2$. As $q_1, q_1'$ is in $C_1\setminus C_2$, and $C_1, C_2$ are not conjugate over $K$, we may order the  tuple $(q_1(u), q_1'(u), q_2(u), q_2'(u))$ so that $(q_1(u), q_1'(u))$ specializes to $(q_1, q_1')$ and $(q_2(u), q_2'(u))$ specializes to $(q_2, q_2')$. Thus the closed subscheme $Q(u)\subset C_u$ defined by $(q_1(u), q_1'(u), q_2(u), q_2'(u))$ is \'etale over $\Spec K[[u]]$.

We form the scheme $\bar{C}_u$ over $K[[u]]$ by gluing $(q_1(u), q_1'(u))$ and $(q_2(u), q_2'(u))$ to points $p_1(u)$, $p_2(u)$. The map $f_u:C_u\to S_{K[[u]]}$ factors through the projection $C_u\to \bar{C}_u$ via $\bar{f}_u:\bar{C}_u\to S_{K[[u]]}$. Moreover, 
$\bar{f}_u\otimes K((u)):\bar{C}_u\otimes K((u))\to S_{K((u))}$ defines an isomorphism of 
$\bar{C}_u\otimes K((u))$ with the generic image $f_u(C_u\otimes K((u)))$, and we may therefore use $\bar{C}_u\otimes K((u))$ to define the Welschinger invariant $\Wel_{K((u))}(f_u(C_u\otimes K((u))))$. Since $\bar{C}_u\otimes K$ has only ordinary double points, and  the closure of  singular locus of $\bar{C}_u\otimes K((u))$  is \'etale over $K[[u]]$, it follows from Lemma~\ref{lem:Etale1} and Lemma~\ref{lem:WelschExt} that $\Wel_{K((u))}(f_u(C_u\otimes K((u))))$ is unramified at $u=0$.

With this additional argument in the special case $s=8$, $D=-2K_S$, the proof of a Proposition~\ref{prop:UnramRed} goes through in the case of a general del Pezzo surface $S$. The arguments for proving  Proposition~\ref{prop:UnramCusp}, Proposition~\ref{prop:UnramTac}  and Proposition~\ref{prop:UnramTri} are purely local and go through without change for a general del Pezzo surface $S$. This yields the following theorem, the analog of Theorem~\ref{thm:WelGlobal}  for general del Pezzo surfaces.

\begin{theorem}\label{thm:WelGlobalGen} Let $S$ be a general del Pezzo surface, $D$ an effective divisor on $S$ with $D^2\ge-1$ and let $r=-D\cdot K_S-1$.Then\\[5pt]
 1. $\Wel_{S,D}\in \GW(k(\Sym^r(S)^0))$ extends to a global section
\[
\Wel_{S,D}\in H^0(\Sym^r(S)^0, \sGW)
\]
2. For $\mathfrak{d}\in \Sym^r(S)^0$, let 
\[
\ev_\mathfrak{d}:H^0(\Sym^r(S)^0, \sGW)\to \GW(k(\mathfrak{d}))
\]
be the evaluation map. Then 
\[
ev_\mathfrak{d}(\Wel_{S,D})=\Wel_{S,D}(\mathfrak{d})
\]
for all  $\mathfrak{d}\in \Sym^r(S)^0_\delta$.
\end{theorem}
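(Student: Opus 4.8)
The plan is to run the proof of Theorem~\ref{thm:WelGlobal} essentially line by line, replacing each ingredient by the del Pezzo analog established in this section. First I would apply Lemma~\ref{lem:Ext1S}, which already gives that $\Wel_{S,D}$ extends to a section of $\sGW$ over the open subscheme $\Sym^r(S,D)^0_\delta$ and that $\ev_{\mathfrak{d}}(\Wel_{S,D})=\Wel_{S,D}(\mathfrak{d})$ for every $\mathfrak{d}\in\Sym^r(S,D)^0_\delta$; in particular this proves (2). Since $\sGW$ is an unramified sheaf on the smooth $k$-scheme $\Sym^r(S)^0$, part (1) is then reduced to checking that this section is unramified at each codimension-one point of $\Sym^r(S)^0$ that lies in the degenerate locus $\Sym^r(S,D)^0_{dgn}$.

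Next I would enumerate those codimension-one points. By the genericity hypothesis on $S$, $\mathfrak{z}(D^{(i)})$ has codimension $\ge 2$ for all $i\ge 3$, and for $n=r$ the map $\mathfrak{z}$ has relative dimension zero, so the del Pezzo analog of Lemma~\ref{lem:DeltaProps}(2) shows that the points to be treated are the $\bar\eta_{?}=\mathfrak{z}(\eta_{?})$ for $?\in\{cusp,tac,tri\}$, together with the points $\bar\eta_{(n_1,D_1)}=\mathfrak{z}(\eta_{(n_1,D_1)})$ coming from the two-component loci $\bar{\sM}_{0,r}^\Sigma(S,D)_{(n_1,D_1)}$ whose image in $\Sym^r(S)^0$ is a divisor. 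A small point of care here: unlike $\bar{\sM}_{0,n}^\Sigma(\P^2,d)$, the stack $\bar{\sM}_{0,r}^\Sigma(S,D)$ need not be irreducible, so the codimension statements and the generic smoothness have to be read componentwise, as in the modified form of Lemma~\ref{lem:DimOrd} recorded above; this changes nothing about which points must be examined.

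It then remains to dispatch each type of point. For $\bar\eta_{cusp}$, $\bar\eta_{tac}$ and $\bar\eta_{tri}$ the proofs of Propositions~\ref{prop:UnramCusp}, \ref{prop:UnramTac} and \ref{prop:UnramTri} --- including the computation that $\mathfrak{z}$ ramifies to order exactly two along the cuspidal locus (Proposition~\ref{prop:CuspRam2}) and is \'etale along the tacnodal and triple-point loci --- take place entirely in a formal neighborhood of the relevant singular point of the special fiber and make no use of the ambient surface beyond that local picture, so they apply verbatim with $\P^2$ replaced by a general del Pezzo $S$; I would just observe this. For $\bar\eta_{(n_1,D_1)}$ I would repeat the argument of Proposition~\ref{prop:UnramRed}: over the parameter disc $\Spec K[[u]]$ the relative singular locus of $f_u(C_u)$ decomposes into the isolated point coming from $C_1\cap C_2$ and a subscheme finite \'etale over $\Spec K[[u]]$, whence Lemma~\ref{lem:WelschExt} gives unramifiedness.

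The one place where the argument genuinely departs from the $\P^2$ case, and hence the main obstacle, is the exceptional reducible configuration on a del Pezzo of degree one with $D=-2K_S$: there the two components $C_1,C_2$ may map onto the same anticanonical cubic, so Proposition~\ref{prop:UnramRed} as stated does not apply. Here I would carry out the supplementary argument sketched above: since $r=1$ there is no symmetric-group action and $\bar{\sM}_{0,1}(S,-2K_S)$ is a smooth scheme near a point $x_0$ with $f_0(C_1)=f_0(C_2)$ a nodal cubic; passing to the completion at $x_0$ one smooths $C_1\cup C_2$ to a family $C_u$, tracks the two pairs of node-preimages on $C_u$, uses generality of $p_1,\ldots,p_8$ to keep them disjoint from $C_1\cap C_2$ so that the subscheme of $C_u$ they cut out is \'etale over $\Spec K[[u]]$, glues to form $\bar C_u$, identifies $\bar C_u\otimes K((u))$ with the generic image $f_u(C_u\otimes K((u)))$, and then invokes Lemma~\ref{lem:Etale1} together with Lemma~\ref{lem:WelschExt} to conclude that $\Wel_{K((u))}(f_u(C_u\otimes K((u))))$ is unramified at $u=0$. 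Once all codimension-one points of the degenerate locus are handled, $\Wel_{S,D}$ extends to a global section of $\sGW$ over $\Sym^r(S)^0$, which is (1); (2) has already been obtained from Lemma~\ref{lem:Ext1S}.
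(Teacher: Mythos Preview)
Your proposal is correct and follows the same approach as the paper, which simply states that the proof is identical to that of Theorem~\ref{thm:WelGlobal} with the modifications described in this section (the IKS lemmas replacing Lemma~\ref{lem:DimOrd}, the componentwise reading needed because $\bar{\sM}_{0,r}^\Sigma(S,D)$ may fail to be irreducible, the local nature of the cusp/tacnode/triple-point analyses, and the supplementary argument for the exceptional reducible configuration on a degree-one del Pezzo with $D=-2K_S$). You have spelled out exactly these modifications and invoked them in the right places.
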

The proof is the same as for Theorem~\ref{thm:WelGlobal} with the modifications described above. This easily yields the same result for all del Pezzo surfaces by a specialization argument.

\begin{corollary}\label{cor:WelGlobalDelPezzo} Let $S$ be a del Pezzo surface, $D$ an effective divisor on $S$ with $D^2\ge-1$ and let $r=-D\cdot K_S-1$.Then\\[5pt]
 1. $\Wel_{S,D}\in \GW(k(\Sym^r(S)^0))$ extends to a global section
\[
\Wel_{S,D}\in H^0(\Sym^r(S)^0, \sGW)
\]
2. For $\mathfrak{d}\in \Sym^r(S)^0$, let 
\[
\ev_\mathfrak{d}:H^0(\Sym^r(S)^0, \sGW)\to \GW(k(\mathfrak{d}))
\]
be the evaluation map. Then 
\[
ev_\mathfrak{d}(\Wel_{S,D})=\Wel_{S,D}(\mathfrak{d})
\]
for all  $\mathfrak{d}\in \Sym^r(S,D)^0_\delta$.
\end{corollary}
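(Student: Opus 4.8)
The plan is to deduce Corollary~\ref{cor:WelGlobalDelPezzo} from Theorem~\ref{thm:WelGlobalGen} by a deformation-and-specialization argument; the point is that an arbitrary del Pezzo surface $S'/k$ admits a small deformation whose generic fiber is a \emph{general} del Pezzo, for which Theorem~\ref{thm:WelGlobalGen} already gives the statement. Concretely, I would first choose a smooth affine $k$-curve $B$, a smooth projective family $\mathcal S\to B$ all of whose fibers are del Pezzo surfaces (possible after shrinking $B$, since ampleness of $-K$ is an open condition), a closed point $b_0\in B$ with $\mathcal S_{b_0}\cong S'$, and a relative effective Cartier divisor $\mathcal D$ on $\mathcal S/B$ with $\mathcal D_{b_0}=D$, arranged so that the generic fiber $\mathcal S_\eta$ is a general del Pezzo over $k(B)$; this is exactly the kind of small deformation discussed at the beginning of this section (for a form $S'$ of $\P^1\times\P^1$, or a del Pezzo deforming to such a form, one passes first to the blow-up $\tilde S$ of a closed point of degree $\le 4$, which deforms to a typical, hence general, del Pezzo, and recovers $S'$ afterwards). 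Then form the relative symmetric power $\mathcal X:=\Sym^r(\mathcal S/B)^0$; it is smooth over $k$, being the quotient of the open part of the $r$-fold fiber product of $\mathcal S/B$ by the free $\mathfrak S_r$-action. One has the relative moduli stack $\bar\sM^\Sigma_{0,r}(\mathcal S/B,\mathcal D)$, the relative map $\mathfrak z$ to $\mathcal X$, and, over the open locus of $\mathcal X$ where the relative singular scheme of the universal family is étale (isolated as in Lemma~\ref{lem:EtCrit} and Lemma~\ref{lem:Etale1}), the relative Welschinger section, whose generic value is $\Wel_{\mathcal S_\eta,\mathcal D_\eta}$ and which, by the construction and the second assertion of Lemma~\ref{lem:WelschExt}, specializes fiberwise to $\Wel_{\mathcal S_b,\mathcal D_b}$.

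The next step is to show that $\Wel_{\mathcal S_\eta,\mathcal D_\eta}\in\GW(k(\mathcal X))$ is unramified at \emph{every} codimension one point $x$ of the smooth $k$-scheme $\mathcal X$. A dimension count shows such an $x$ is either a codimension one point of the generic fiber $\mathcal X_\eta=\Sym^r(\mathcal S_\eta)^0$, or the generic point $\xi_b$ of some closed fiber $\Sym^r(\mathcal S_b)^0$. In the first case $\mathcal S_\eta$ is a general del Pezzo, so Theorem~\ref{thm:WelGlobalGen}(1) says precisely that $\Wel_{\mathcal S_\eta,\mathcal D_\eta}$ is unramified at $x$. In the second case, the analog of Lemma~\ref{lem:DimOrd} for $n=r$ — which holds for any del Pezzo — shows that the degenerate locus $\Sym^r(\mathcal S_b,\mathcal D_b)^0_{dgn}$ is a proper closed subset, so $\xi_b\in\Sym^r(\mathcal S_b,\mathcal D_b)^0_\delta$ lies in the relative étale locus; then $\sO_{\mathcal X,\xi_b}$ is a discrete valuation ring with fraction field $k(\mathcal X)$ and residue field $k(\Sym^r(\mathcal S_b)^0)$, and Lemma~\ref{lem:WelschExt} (applied, as in the proofs of Propositions~\ref{prop:UnramCusp}--\ref{prop:UnramRed}, to the finitely many pieces lying over $\xi_b$) shows that $\Wel_{\mathcal S_\eta,\mathcal D_\eta}$ extends over $\sO_{\mathcal X,\xi_b}$. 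Since these exhaust the codimension one points of $\mathcal X$, the unramified-sheaf description of $\sGW$ yields a global section $\Wel\in H^0(\mathcal X,\sGW)$.

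To finish, restrict $\Wel$ along the smooth closed immersion $\Sym^r(S')^0=\mathcal X_{b_0}\hookrightarrow\mathcal X$. Its value at the generic point of $\Sym^r(S')^0$ is $\Wel_{S',D}$ by the construction of the relative section together with the second assertion of Lemma~\ref{lem:WelschExt}, which proves part (1). For part (2), Lemma~\ref{lem:Ext1S} already provides a section of $\sGW$ over $\Sym^r(S',D)^0_\delta$ with $\ev_{\mathfrak d}=\Wel_{S',D}(\mathfrak d)$ there; since $\sGW$ is unramified, $H^0(-,\sGW)$ injects into the Grothendieck--Witt ring of the function field, so this section and the restriction of $\Wel$ coincide (both having generic value $\Wel_{S',D}$), and therefore $\ev_{\mathfrak d}(\Wel)=\Wel_{S',D}(\mathfrak d)$ for all $\mathfrak d\in\Sym^r(S',D)^0_\delta$.

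I expect the main obstacle to be the bookkeeping that makes the specialization legitimate: constructing the deformation so that its generic fiber is general while its special fiber is exactly the given $S'$ (including the detour through the auxiliary blow-up for forms of $\P^1\times\P^1$ and checking that the Welschinger invariant of $S'$ is recovered from that of $\tilde S$), and verifying that near each $\xi_b$ the relative singular scheme is genuinely étale over $\mathcal X$ so that Lemma~\ref{lem:WelschExt} — in particular its residue-field clause identifying the reduction with $\Wel_{\mathcal S_b,\mathcal D_b}$ — applies with $\sO=\sO_{\mathcal X,\xi_b}$. Once these local and deformation-theoretic points are in place, the remainder is a formal consequence of the unramified-sheaf and $\A^1$-invariance formalism already used for Theorem~\ref{thm:WelGlobal}.
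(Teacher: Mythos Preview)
Your proposal is correct and follows essentially the same approach as the paper: reduce to a typical del Pezzo by blowing up a closed point (so that a small deformation to a general del Pezzo exists), spread the Welschinger class over the total space of the deformation, check unramifiedness at every codimension one point of the relative $\Sym^r$ by splitting into ``horizontal'' points (handled by Theorem~\ref{thm:WelGlobalGen} for the general generic fiber) and generic points of closed fibers (handled by Lemma~\ref{lem:WelschExt} via \'etaleness of the singular locus), then restrict the resulting global section to the special fiber and invoke Lemma~\ref{lem:Ext1S} for part~(2). The only cosmetic difference is that the paper works over $\Spec k[[u]]$, so there is a single closed fiber and hence a single ``vertical'' codimension one point to check, whereas you work over an affine curve $B$ and must check the generic point of every closed fiber; your argument handles this uniformly, so nothing is lost.
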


\begin{proof} We already have an extension of $\Wel_{S,D}$ to a global section of $\sGW$ over $ \Sym^r(S,D)^0_\delta$ satisfying the property (2), so we need only see that $\Wel_{S,D}$  extends to a global section over all of $\Sym^r(S)^0$. We need only show that $\Wel_{S,D}$ is unramified at all codimension one points of $\Sym^r(S)^0\setminus \Sym^r(S,D)^0_\delta$. 

As mentioned before, we reduce to the case of typical $S$ by blowing up $S$ at some point, $g:\tilde{S}\to S$ and replacing $D$ with $g^*(D)$. Since the blow-up leaves unaltered all codimension one points of $\Sym^r(S)^0$, we may replace $S$ with $\tilde{S}$.

For typical $S$, there is a smooth projective family $\sS\to \Spec k[[u]]$ with special fiber $S$ and general fiber a general del Pezzo $S_u=\sS_u\otimes k((u))$; we may also lift $D$ to an effective Cartier divisor $\sD$ on $\sS$; let $D_u:=\sD\otimes k((u))$. We have the diagram
\begin{equation}\label{eqn:BasicDiagram3}
\xymatrix{
&\sP^\Sigma_{\sS0}\ar[rd]^{f_{\sS0}^\Sigma}\ar[d]\\
&\bar{\sM}_{0,r}^\Sigma(\sS, \sD)^0\ar[dr]^{\mathfrak{D}_{\sS}^0}\ar[dl]_{\mathfrak{z}_{\sS}}&\sC_{0,\sD}\ar[d]\ar@{^(->}[r]&\bar{P}_{0,\sD}\times \sS\ar[dl]^{p_1}\\
\Sym^r_{k[[u]]}(\sS)^0&&\bar{P}_{0,\sD}
}
\end{equation}
over $k[[u]]$ with special fiber the diagram \eqref{eqn:BasicDiagram2} for $(S,D)$ and with general fiber the diagram \eqref{eqn:BasicDiagram2} for $(S_u, D_u)$.

Let $\tilde{\eta}=\Spec \sO_{\Sym^r_{k[[u]]}(\sS), \eta}$. Then $\tilde{\eta}$ is a flat $k[[u]]$-scheme with generic fiber a generic point of $\Sym^r_{k((u))}(S_u)$ and special fiber $\eta$. The morphism $\mathfrak{z}_{\sS}$ is \'etale and proper over  $\tilde{\eta}$ and each $\tilde{x}_i\in \mathfrak{z}_{\sS}^{-1}(\tilde{\eta})$ gives a family of curves $f_{\tilde{\eta}}(C_{\tilde{\eta}})$ in $\sS_{\tilde{\eta}}$ with only ordinary double points in the fiber over $k((u))$ and over $k$. As the number of double points remains constant, the scheme of singular points of this family is \'etale over $\tilde{\eta}$ (Lemma~\ref{lem:Etale1}). Using Lemma~\ref{lem:WelschExt}, this tells us that the Welschinger invariant $\Wel_{S_u, D_u}(\eta_u)\in \GW(k((u))(\Sym^r(S_u)^0)$ is unramified at the codimension one point 
$\eta$ of $\Sym^r_{k[[u]]}(\sS)^0$ and thus  $\Wel_{S_u, D_u}(\eta_u)$ extends to a section $\Wel_{\sS, \sD}$ of $\sGW$ over some open subscheme $\sU$ of $\Sym^r_{k[[u]]}(\sS)^0$ containing $\eta$ and containing $\Sym^r_{k((u))}(S_u)^0$ and with 
$\Wel_{\sS, \sD}(\eta)=\Wel_{S, D}(\eta)$.  Then $\sU$ contains all codimension one points of $\Sym^r_{k[[u]]}(\sS)^0$ hence $\Wel_{\sS, \sD}$ extends to a global section of $\sGW$ over $\Sym^r_{k[[u]]}(\sS)^0$. In particular, the restriction of $\Wel_{\sS, \sD}$ to $\Sym^r(S)^0$ defines  a global section of $\sGW$ with value $\Wel_{S, D}(\eta)$ at $\eta$. Thus $\Wel_{S, D}(\eta)$ is everywhere  unramified and extends to a global section of $\sGW$ over all of $\Sym^r(S)^0$.
\end{proof}

\begin{corollary} Let $D$ be an effective divisor on a del Pezzo surface $S$ with $D^2\ge-1$, with $D$ and $S$ defined over a field $K$ of characteristic $>3$ and let $r=-D\cdot K_S-1$. Then for $\mathfrak{d}_1, \mathfrak{d}_2$ in $\Sym ^r(S,D)^0_\delta(K)$,  in the same $\A^1$-connected component of $\Sym ^r(S)^0(K)$, we have
\[
\Wel_{S,D}(\mathfrak{d}_1)=\Wel_{S,D}(\mathfrak{d}_2).
\]
\end{corollary}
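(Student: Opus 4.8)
The plan is to run the same argument as in the proof of Corollary~\ref{cor:WelshInv}, now with Corollary~\ref{cor:WelGlobalDelPezzo} playing the role of Theorem~\ref{thm:WelGlobal}. The first point is that $\Sym^r(S)^0$ is a smooth quasi-projective $k$-scheme: since $S$ is smooth over $k$ and $\mathfrak{S}_r$ acts freely on the complement $S^r_0\subset S^r$ of all diagonals, $\Sym^r(S)^0$ is the quotient scheme $\mathfrak{S}_r\backslash S^r_0$, which is smooth. (This is where $\dim S=2$ is harmless; $\Sym^r$ of a variety of dimension $\ge 3$ is typically singular, but we have removed the diagonals.) Hence the Grothendieck--Witt sheaf $\sGW$ restricted to $\Sym^r(S)^0$ is the unramified, $\A^1$-invariant Nisnevich sheaf to which the $\pi_0^{\A^1}$-formalism recalled in the proof of Corollary~\ref{cor:WelshInv} applies.

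Second, by Corollary~\ref{cor:WelGlobalDelPezzo}, $\Wel_{S,D}$ is a global section in $H^0(\Sym^r(S)^0,\sGW)$, and its evaluation $\ev_{\mathfrak{d}}(\Wel_{S,D})$ equals $\Wel_{S,D}(\mathfrak{d})$ for every $\mathfrak{d}\in\Sym^r(S,D)^0_\delta$. Using the $\A^1$-invariance $H^0(Y,\sGW)\cong H^0(Y\times\A^1,\sGW)$ of \cite[Theorem 2.37]{MorelA1Top} for $Y=\Sym^r(S)^0$, together with the functoriality of $\sGW$, one gets that the value $\ev_x(\Wel_{S,D})\in\GW(K)$ at a $K$-point $x$ of $Y$ depends only on the image $cl_K(x)\in\pi_0^{\A^1}(Y)_K$; this is exactly the deduction carried out in the proof of Corollary~\ref{cor:WelshInv}.

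Finally, since $\mathfrak{d}_1$ and $\mathfrak{d}_2$ lie in the same $\A^1$-connected component of $\Sym^r(S)^0_K$ we have $cl_K(\mathfrak{d}_1)=cl_K(\mathfrak{d}_2)$, hence $\ev_{\mathfrak{d}_1}(\Wel_{S,D})=\ev_{\mathfrak{d}_2}(\Wel_{S,D})$; as both $\mathfrak{d}_i$ lie in $\Sym^r(S,D)^0_\delta(K)$, part~(2) of Corollary~\ref{cor:WelGlobalDelPezzo} identifies these with $\Wel_{S,D}(\mathfrak{d}_1)$ and $\Wel_{S,D}(\mathfrak{d}_2)$, giving the equality. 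I do not anticipate any real obstacle: essentially all the work has already been done in Corollary~\ref{cor:WelGlobalDelPezzo}, and the only remaining input is the standard $\A^1$-invariance of $\sGW$. The two things to keep an eye on are the smoothness of $\Sym^r(S)^0$ over $k$ noted above and the hypothesis $\Char K>3$, which is what makes the cited results (in particular Corollary~\ref{cor:WelGlobalDelPezzo}) available.
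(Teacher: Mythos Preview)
Your proposal is correct and is exactly the approach the paper intends: the corollary is stated without proof because it follows from Corollary~\ref{cor:WelGlobalDelPezzo} by the same $\A^1$-invariance argument used for Corollary~\ref{cor:WelshInv}. One small remark: your parenthetical about $\Sym^r$ being singular only in dimension $\ge3$ is slightly off (already in dimension $2$ the full symmetric product is singular along the diagonal), but this is irrelevant since the free action on $S^r_0$ is what gives smoothness of $\Sym^r(S)^0$, as you correctly note.
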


\begin{remark} \label{rem:WelComp3} Let $p_a(D)$ be the arithmetic genus
\[
p_a(D):=\frac{1}{2}D\cdot (D+K_S) +1.
\]
Just as for curves in $\P^2$ (Remark~\ref{rem:WelComp2}), for  $\mathfrak{d}$ an $\R$-point of $\Sym^r(S)^0$, $\Wel_{S,D}(\mathfrak{d})\in \GW(\R)$ has signature $(-1)^{p_a(D)}$ times Welschinger's invariant for the signed count of real degree $d$ curves in $S$ containing $\mathfrak{d}$ and the rank of $\Wel_{S,D}(\mathfrak{d})$ is just the number of curves (over $\C$) containing $\mathfrak{d}$.
\end{remark}


\begin{thebibliography}{99}

\bibitem{AbramOort}
D. Abramovich, F. Oort, {\em Stable maps and Hurwitz schemes in mixed characteristics}. Advances in algebraic geometry motivated by physics (Lowell, MA, 2000), 89--100, Contemp. Math., 276, Amer. Math. Soc., Providence, RI, 2001.

\bibitem{FP}
W. Fulton,  R. Pandharipande, {\em  Notes on stable maps and quantum cohomology}. Algebraic geometry--Santa Cruz 1995, 45--96, 
Proc. Sympos. Pure Math., 62, Part 2, Amer. Math. Soc., Providence, RI, 1997.

\bibitem{IKS}
Ilia Itenberg, Viatcheslav Kharlamov, Eugenii Shustin
{\em Welschinger invariants revisited}. arXiv:1409.3966 [math.AG]


\bibitem{IKS1}
Itenberg, Ilia; Kharlamov, Viatcheslav; Shustin, Eugenii {\em Welschinger invariants of real del Pezzo surfaces of degree $\ge2$}. Internat. J. Math. 26 (2015), no. 8. 

\bibitem{J}
J.F. Thomsen,  {\em  Irreducibility of $\bar{M}_{0,n}(G/P,\beta)$}. 
Internat. J. Math. 9 (1998), no. 3, 367--376. 

\bibitem{KW1}
J.L. Kass, K. Wickelgren, {\em The class of Eisenbud-Khimshiashvili-Levine is the local $\A^1$-Brouwer degree}. Preprint 2016 arXiv:1608.05669 .

\bibitem{KW2}
J. L. Kass, K. Wickelgren, {\em An Arithmetic Count of the Lines on a Smooth Cubic Surface}. Preprint 2017  arXiv:1708.01175 [math.AG]

\bibitem{KP}
B. Kim,  R. Pandharipande, {\em The connectedness of the moduli space of maps to homogeneous spaces}.  Symplectic geometry and mirror symmetry (Seoul, 2000), 187--201, World Sci. Publ., River Edge, NJ, 2001. 

\bibitem{KPi}
Kleiman, Steven; Piene, Ragni. {\em Enumerating singular curves on surfaces}. Algebraic geometry: Hirzebruch 70 (Warsaw, 1998), 209--238, Contemp. Math., 241, Amer. Math. Soc., Providence, RI, 1999. 

\bibitem{MorelA1Top}
F. Morel, {\bf $\A^1$-algebraic topology over a field}. Lecture Notes in Mathematics, 2052. Springer, Heidelberg, 2012.

\bibitem{MorelICTP}
F. Morel, {\em Introduction to $\A^1$-homotopy theory}.   Lectures given at the School on Algebraic $K$-Theory and its Applications, ICTP, Trieste. 8-19 July, 2002.

\bibitem{LevEnum}
Marc Levine, {\em Toward an enumerative geometry with quadratic forms}. Preprint 2017.
arXiv:1703.03049 [math.AG]



\bibitem{Scharlau}
W. Scharlau. {\bf  Quadratic and Hermitian forms}. Grundlehren der Mathematischen Wissenschaften, 270. Springer-Verlag, Berlin, 1985. 

\bibitem{ShenGott}
G\"ottsche, Lothar; Shende, Vivek, {\em Refined curve counting on complex surfaces}. Geom. Topol. 18 (2014), no. 4, 2245--2307. 

\bibitem{Wel2}
Welschinger, Jean-Yves. {\em  Invariants of real symplectic 4-manifolds and lower bounds in real enumerative geometry}. Invent. Math. 162 (2005), no. 1, 195--234.

\bibitem{Wel1}
Welschinger, Jean-Yves. {\em Invariants of real rational symplectic 4-manifolds and lower bounds in real enumerative geometry}. C. R. Math. Acad. Sci. Paris 336 (2003), no. 4, 341--344.

\end{thebibliography}
\end{document}